\numberwithin{equation}{section}
\theoremstyle{plain}
\newtheorem{teo}{Theorem}[section]
\newtheorem{lemma}[teo]{Lemma}
\newtheorem{cor}[teo]{Corollary}
\newtheorem{prop}[teo]{Proposition}
\newtheorem{deff}[teo]{Definition}
\theoremstyle{remark}
\newtheorem{oss}[teo]{Remark}
\def\K{\mathcal K}
\def\P{\mathcal P}
\def\N{\mathbb N}
\def\R{\mathbb R}
\def\01{[0,1]}
\def\bal{\begin{aligned}}
\def\eal{\end{aligned}}
\def\ds{\displaystyle}
\def\KL {\operatorname{KL}}
\def\gammaep {\gamma^{\ep}}
\newcommand{\be}{\begin{equation}}
\newcommand{\ee}{\end{equation}}
\newcommand{ \ep }{\varepsilon}
\newcommand{ \refmx }{\mathfrak{m}_1}
\newcommand{ \refmy }{\mathfrak{m}_2}
\newcommand{ \refm }{\mathfrak{m}}
\newcommand{ \Lexp}{L^{\rm exp}_{\ep}}
\newcommand{ \Sep}{\mathcal{S}_{\ep}}
\newcommand{\KLN}{\operatorname{KL}^{N}}
\newcommand{ \Dep}{D_{\ep}}
\newcommand{ \OTep}{\operatorname{OT}_{\ep}}
\newcommand{ \OTNep}{\operatorname{OT}^N_{\ep}}
\newcommand{ \ka}{\mathcal{K}}
\newcommand{\restr}[1]{\lower3pt\hbox{$|_{#1}$}}
\newcommand{\Fcep}{\mathcal{F}^{(c, \ep)}}
\newcommand{\uunocep}{\hat{u}^{(N,c,\ep)}_1}
\newcommand{\uduecep}{\hat{u}^{(N,c,\ep)}_2}
\newcommand{\Ficep}{\mathcal{F}_i^{(N,c, \ep)}}
\newcommand{\uicep}{\hat{u}^{(N,c,\ep)}_i}
\newcommand{\uNcep}{\hat{u}^{(N,c,\ep)}_N}
\newcommand{\ucep}{u^{(c,\ep)}}
\newcommand{\vcep}{v^{(c,\ep)}}
\newcommand{\rhoN}{\rho^N}
\newcommand{\Fam}{\mathcal{F}}
\newcommand{\ac}{\ll}
\newcommand\blfootnote[1]{%
  \begingroup
  \renewcommand\thefootnote{}\footnote{#1}%
  \addtocounter{footnote}{-1}%
  \endgroup
}
\title{An Optimal Transport approach for the Schr\"odinger bridge problem and convergence of Sinkhorn algorithm}
\author[1,2]{Simone Di Marino}
\author[3]{Augusto Gerolin}
\affil[1]{\small{INdAM, Unit\`a SNS, Scuola Normale Superiore, Piazza dei Cavalieri 7, 56126 Pisa (PI), Italy.}}
\affil[2]{Universit\`a degli Studi di Genova, DIMA, Genova, Italy.}
\affil[3]{Vrije Universiteit Amsterdam, Department of Theoretical Chemistry, Amsterdam, The Netherlands.}
\date{\today}
\begin{document}
\maketitle

\blfootnote{simone.dimarino@unige.it, augustogerolin@gmail.com}
\blfootnote{S. DM. is member of \textit{Gruppo Nazionale per l'Analisi Matematica, la Probabilit\`a e le loro Applicazioni} (GNAMPA) of the \textit{Istituto Nazionale di Alta Matematica} (INdAM).  A.G. acknowledges the European Research Council under H2020/MSCA-IF\textit{``OTmeetsDFT"} [grant ID: 795942]. }
\begin{abstract}
\noindent
This paper exploit the equivalence between the Schr\"odinger Bridge problem \cite{LeoSurvey,NelNM,Schr31} and the entropy penalized optimal transport \cite{Cut,GalSal10} in order to find a different approach to the duality, in the spirit of optimal transport. This approach results in a priori estimates which are consistent in the limit when the regularization parameter goes to zero. In particular, we find a new proof of the existence of maximising entropic-potentials and therefore, the existence of a solution of the Schr\"odinger system. Our method extends also when we have more than two marginals:  we can provide an alternative proof of the convergence of the Sinkhorn algorithm with two margianals and we show that the Sinkhorn algorithm converges in the multi-marginal case. \end{abstract}

\noindent
{\small Keywords: Schr\"odinger problem, Entropic regularization of Optimal Transport, Kantorovich duality, Sinkhorn algorithm, Iterative Proportional Fitting Procedure.}

\section{Introduction}
\quad Let $(X,d_X)$ and $(Y,d_Y)$ be Polish spaces, $c:X\times Y\to \mathbb{R}$ be a cost function, $\rho_1\in\P(X)$ and $\rho_2\in \P(Y)$ be probability measures. We consider the \textit{Schr\"odinger Bridge problem}

\begin{equation}\label{intro:mainKL}
\OTep(\rho_1,\rho_2) = \inf_{\gamma\in \Pi(\rho_1,\rho_2)}\ep\KL(\gamma|\ka),
\end{equation}
where $\mathcal{K}$ is the so-called \textit{Gibbs Kernel} associated with the cost $c$: 
 \begin{equation}\label{eqn:Gibbs}
 \mathcal{K} = k(x,y)\rho_1\otimes\rho_2 = e^{-\frac{c(x,y)}{\ep}}\rho_1\otimes\rho_2.
 \end{equation}
 The function $\KL(\gamma|\ka)$ in \eqref{intro:mainKL} is the Kullback-Leibler divergence between the probability measures $\gamma$ and $\ka \in \P(X\times Y)$ which is defined as
\[
\ds\KL(\gamma|\ka) =
     \ds\begin{cases}
      \ds\int_{X\times Y}\gamma\log\left(\dfrac{\gamma}{k}\right)d(\rho_1\otimes\rho_2) &\text{ if } \gamma \ll \rho_1\otimes\rho_2 \\
       +\infty &\text{ otherwise }\\ 
     \end{cases}.
\]
Here, by abuse of notation we are denoting by $\gamma$ the Radon-Nikodym derivative of $\gamma$ with respect to the product measure $\rho_1\otimes\rho_2$. Geometrically speaking, when we interpret the Kullback-Leibler divergence as a distance, the problem \eqref{intro:mainKL} defines the so called \textit{Kullback-Leibler projection} of $\ka$ on the set $\Pi(\rho_1,\rho_2)$.

In the past years, theoretical and numerical aspects of \eqref{intro:mainKL} has been object of study in mathematical physics (e.g. \cite{CarSth,Car84,Car86, CruZam91, Fen,  NelNM, Neldyn, NelQF,  Schr31, Zam86, ZamVar86, Zam15}), probability (e.g. \cite{CatLeo94, GozLeo07, LeoGamma, LeoSurvey}), fluid mechanics (e.g. \cite{ArnCruLeoZam17, BenCarNen17}), metric geometry (e.g. \cite{GenLeoRipTam18,GigTam18}), optimal transport theory (e.g. \cite{BenCarCutNenPey2015, CarDuvPeySch, CarLab18, ChenConGeoRip, CheGeoPav, FatGozPro19, GigTamBB18, Mik04}), data sciences (e.g. \cite{FeyFXVAmaPey, GenChiBacCutPey,GenCutPey, Lui18, Lui19, PavTabTriSch, RabinPeyDelBer2011} see also the book \cite{CutPeyBook} and references therein). 

The existence of a minimizer in \eqref{intro:mainKL} was obtain in different generality by I. Czisar, L. Ruschendorf, J. M. Borwein, A. S. Lewis and R. D. Nussbaum, C. L\'eonard, N. Gigli and L. Tamanini among others \cite{BorLewNus94, Csi75, GigTam18, RusIPFP}. In the most general case the kernel $\ka$ is not even assumed to be absolutely continuous with respect to $\rho_1 \otimes \rho_2$, as opposed to our assumption \eqref{eqn:Gibbs}. In particular, under the assumption \eqref{eqn:Gibbs} on $\ka$ (see for example \cite{LeoSurvey}), a unique minimizer for \eqref{intro:mainKL} exists and $\gammaep_{opt}$ is the minimizer if and only if
\be\label{intro:SchSys}
\gammaep_{opt} = a^{\ep}(x)b^{\ep}(y)\ka, \text{ where } a^{\ep},b^{\ep} \text{ solve } 
     \ds\begin{cases}
     a^{\ep}(x)\int_{Y} b^{\ep}(y)k(x,y)d\rho_2(y) = 1 \\
     b^{\ep}(y)\int_{X} a^{\ep}(y)k(x,y)d\rho_1(x) = 1\\
     \end{cases}.
\ee

The functions $a^{\ep}(x)$ and $b^{\ep}(y)$ are called \textit{Entropic potentials}. They are unique up to the trivial transformation $a \mapsto a/\lambda$, $b \mapsto \lambda b$ for some $\lambda >0$. The system solved by the Entropic potentials is called the \emph{Schr\"{o}dinger system}. Assuming $\rho_1$ and $\rho_2$ are everywhere positive and have finite entropy with respect to $\ka$, the minimizer in \eqref{intro:mainKL} has a special form as is stated in the Theorem below \cite[Corollary 3.9]{BorLewNus94}. 

\begin{teo}[J. M. Borwein, A. S. Lewis, and R. D. Nussbaum]\label{thm:BroLewNus}
Let $(X,d_X)$ and $(Y,d_Y)$ be a Polish spaces, $c:X\times Y\to [0, \infty)$ be a bounded cost function, $\rho_1 \in \P(X)$, $\rho_2 \in \P(Y)$  be probability measures such that $\rho_1(x),\rho_2(y)>0, \forall x\in X,  y\in Y$ and $\mathcal{K} = e^{-\frac{c(x,y)}{\ep}}\rho_1\otimes\rho_2$. Then, if and $\KL(\rho_1\otimes\rho_2|\ka) <+\infty$, then for each $\ep> 0$, there exists a unique minimizer $\gamma^{\ep}\in \Pi(\rho_1,\rho_2)$ for the Schr\"odinger problem $\OTep(\rho_1,\rho_2)$ that can be written as 
\[
\gammaep_{opt}(x,y) = a^{\ep}(x)b^{\ep}(y) \ka(x,y), \quad \ln a^{\ep}(x)\in L_{\rho_1}^1(X),  \ln b^{\ep}(x) \in L_{\rho_2}^1(Y). 
\] 
\end{teo}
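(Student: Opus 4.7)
The plan is to separate the statement into primal existence/uniqueness (direct method) and the factorization, the latter being the substantive point; for the factorization I would pass to the dual problem, produce entropic potentials as its maximizers, and then read off the form of $\gammaep_{opt}$ from the optimality conditions.

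\emph{Primal step.} The constraint set $\Pi(\rho_1,\rho_2)$ is weakly compact by Prokhorov (tightness passes from each marginal on its Polish factor) and $\gamma\mapsto\KL(\gamma|\ka)$ is weakly lower semicontinuous, strictly convex on its domain, and finite at $\rho_1\otimes\rho_2$ by hypothesis. The direct method therefore yields a unique minimizer $\gammaep_{opt}$.

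\emph{Dual step.} I would write the Fenchel dual
\[
\OTep(\rho_1,\rho_2)=\sup_{u,v}\left\{\int u\,d\rho_1+\int v\,d\rho_2-\ep\!\int\!\bigl(e^{(u(x)+v(y)-c(x,y))/\ep}-1\bigr)\,d(\rho_1\otimes\rho_2)\right\},
\]
over measurable $u\in L^1(\rho_1)$, $v\in L^1(\rho_2)$, and check strong duality via Fenchel--Rockafellar, the constraint qualification being supplied precisely by $\KL(\rho_1\otimes\rho_2|\ka)<+\infty$. To obtain a dual maximizer, take a maximizing sequence $(u_n,v_n)$, use the gauge invariance $(u,v)\mapsto(u+\lambda,v-\lambda)$ to normalize $\int u_n\,d\rho_1=0$, and replace each pair by one Sinkhorn update
\[
\tilde u_n(x)=-\ep\log\!\int e^{(v_n(y)-c(x,y))/\ep}\,d\rho_2(y),\qquad\tilde v_n(y)=-\ep\log\!\int e^{(\tilde u_n(x)-c(x,y))/\ep}\,d\rho_1(x),
\]
which only raises the dual value. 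Because $0\le c\le\|c\|_\infty$ pointwise, the oscillation of $\tilde u_n$ and $\tilde v_n$ is controlled by $\|c\|_\infty$, so modulo the normalization the iterates are uniformly bounded in $L^\infty$. Extracting a weak-$*$ limit $(u^\ep,v^\ep)\in L^\infty(\rho_1)\times L^\infty(\rho_2)$ and passing to the limit (Fatou for the exponential term, linearity for the linear terms) produces a maximizer; in particular $\ln a^\ep=u^\ep/\ep$ and $\ln b^\ep=v^\ep/\ep$ sit in the required $L^1$ spaces.

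\emph{Factorization and main obstacle.} The first-order optimality conditions for the dual at $(u^\ep,v^\ep)$, upon setting $a^\ep=e^{u^\ep/\ep}$ and $b^\ep=e^{v^\ep/\ep}$, are exactly the Schr\"odinger system \eqref{intro:SchSys}; hence $a^\ep(x)b^\ep(y)\ka$ has the correct marginals, attains the primal value by strong duality, and by uniqueness from the primal step coincides with $\gammaep_{opt}$. The main obstacle I anticipate is the compactness in the dual: the natural function space for the potentials is the non-reflexive $L^1$, so one must first upgrade the bounds to $L^\infty$, which is exactly where the hypothesis that $c$ is bounded enters (to control the oscillation of the Sinkhorn iterates uniformly in $n$). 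A secondary subtlety is the passage to the limit in the exponential term, which is handled by Fatou in one direction and by the $L^\infty$ bound in the other.
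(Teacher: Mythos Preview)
Your proposal is correct and follows essentially the same route the paper takes in Section~2 (Theorem~\ref{thm:kanto2Nmax} together with Proposition~\ref{prop:equiv_comp} and Proposition~\ref{prop:duality2N}): regularize a maximizing sequence by one or two applications of the $(c,\ep)$-transform to force uniform $L^\infty$ bounds from the boundedness of $c$, extract a weak-$*$ limit, pass to the limit using convexity/Fatou for the exponential term, and then read off the Schr\"odinger system as the optimality condition, which identifies $\gammaep_{opt}=e^{(u^\ep+v^\ep-c)/\ep}\rho_1\otimes\rho_2$.

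The one noteworthy difference is how strong duality is obtained. You invoke Fenchel--Rockafellar with a constraint qualification; the paper instead proves the elementary pointwise inequality $ts+\ep t\ln t-\ep\ge-\ep e^{-s/\ep}$ to get $\ep\KL(\gamma|\ka)\ge D_\ep(u,v)+\ep$ for all admissible $\gamma,u,v$ (Lemma~\ref{lem:easydual}), and then closes the gap by checking directly that the plan $\kappa(e^{u^*/\ep},e^{v^*/\ep})$ built from the dual maximizer lies in $\Pi(\rho_1,\rho_2)$ and saturates the inequality. This is slightly more self-contained and sidesteps any delicacy about constraint qualifications in infinite dimensions; your route is shorter if one is willing to quote the abstract duality theorem. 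Either way the heart of the argument---the $L^\infty$ compactness coming from bounded $c$ via the Sinkhorn/$(c,\ep)$-transform step---is the same.
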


 \medskip


In this paper, we are interested in the following questions:
\begin{itemize}
\item[(1)] \emph{What is the regularity of the Entropic potentials $a^{\ep}$ and $b^{\ep}$?}

\item[(2)] \emph{Can we understand the structure and regularity of the minimizer in \eqref{intro:mainKL} if we consider the Schr\"odinger Bridge problem with $N$ given marginals $\rho_1,\rho_2,\dots,\rho_N$ instead of $2$?}

\end{itemize}

The answers to the questions (1) and (2) relies on the Kantorovich duality formulation of \eqref{intro:mainKL} and its extension to the multi-marginal setting: we will exploit the parallel with optimal transport to give also a new (variational) proof for the existence of a solution to the Schr\"{o}dinger system. We believe that also this contribution is important since the only available proofs of that pass through abstract results of closure of ``sum type" functions.

The multi-marginal Schr\"odinger Bridge problem, to be introduced in section \ref{sec:multimarginal}, has been recently consider in the literature from different viewpoints (e.g. \cite{BenCarCutNenPey2015, BenCarNen17, CarDuvPeySch, CarLab18, ChenConGeoRip, CutDou2014, GerGroGor19, GerKauRajEnt}) as, for instance, the Wasserstein Barycenters, Matching problem in Economics, time-discretisation of Euler Equations and Density Functional Theory in computational chemistry. 

Finally, we want to mention that G. Carlier and M. Laborde in \cite{CarLab18} show the well-posedness (existence, uniqueness and smooth dependence with respect to the data) for the multi-marginal Schr\"odinger system in $L^{\infty}$ - see \eqref{intro:SchSysMM} in section \ref{sec:multimarginal} -  via a local and global inverse function theorems. This is a different approach and orthogonal result compared to the study presented in this paper; moreover their result is restricted to measures $\rho_i$ which are absolutely continuous with respect to some reference measure, with density bounded from above and below.

\subsection*{Computational aspects and connection with Optimal Transport Theory} 

\quad  In many applications, the method of choice for numerically computing \eqref{intro:mainKL} is the so-called \textit{Iterative Proportional Fitting Procedure} (IPFP) or \textit{Sinkhorn algorithm} \cite{Sin64}. The aim of the Sinkorn algorithm is to construct the measure $\gammaep$ realizing minimum in \eqref{intro:mainKL} by fixing the shape of the guess as $\gammaep_{n} = a^n(x)  b^n(y) \ka$ (since this is the actual shape of the minimizer) and then alternatively updating either $a^n$ or $b^n$, by
matching one of the marginal distribution respectively to the target marginals $\rho_1$ or $\rho_2$, 

The IPFP sequences $(a^n)_{n\in\N}$ and $(b^n)_{n\in\N}$ are defined thus iteratively by\footnote{The iterations above system also appeared in \cite{Bac65,DemSte40, Idel16,Kru37,Yul12} with different names (eg. RAS, IPFP).}
\begin{equation}\label{eq:IPFPiteration}
\begin{array}{lcl}
\ds a^0(x) & = &1, \\
\ds b^0(y) & = &1, \\
\ds b^n(y) & = & \dfrac{1}{\int k(x,y) a^{n-1}(x)d\rho_1(x)}, \\
\ds a^n(y) & = & \dfrac{1}{\int k(x,y) b^{n}(y)d\rho_2(y)}. 
\end{array}
\end{equation}


While $a^n$ and $b^n$ will be approximations of the solution of the Schr\"{o}dinger system, the sequence of probability measures  $\gammaep_{n} = a^{n}(x) b^n(y)\K$  will approximate the minimizer $\gammaep$. 

We stress that the IPFP procedure can be easily generalized in the multi-marginal setting, whose discussion will be detailed in section \ref{sec:multimarginal}. 
\begin{itemize}

\item[(3)] \textit{Can one prove convergence of the Sinkhorn algorithm in two and several marginals case?} 

\end{itemize}

In the two marginals case, the IPFP schemes was introduced by R. Sinkhorn \cite{Sin64}. The convergence of the iterates \eqref{eq:IPFPiteration} was proven by J. Franklin and J. Lorenz \cite{FraLor89} in the discreate case and by  L. Ruschendorf \cite{RusIPFP} in the continuous one. An alternative proof based on the Franklin-Lorenz approach via the Hilbert metric was also provided by Y. Chen, T. Georgiou and M. Pavon \cite{CheGeoPav}, which is particular leads to a linear convergence rate of the procedure (in the Hilbert metric).

Despite the different approaches and theoretical guarantees obtained in the $2$-marginal problem, in the multi-marginal case the situation changes completely. Although numerical evidence suggests convergence and stability of the Sinkorhn algorithm for general class of cost functions \cite{BenCarCutNenPey2015, CutDou2014, CarDuvPeySch,CarDuvPeySch}, theoretical results guaranteeing convergence, stability were unknown (even if in \cite{RusIPFP} it is claimed that with his methods the result can be extended to the multimarginal case, but to our knowledge this has not been done yet). 

One of the contributions of this paper is to give convergence results of the Sinkhorn algorithm in the multi-marginal setting. In our approach we exploit the regularity of Entropic potentials to prove by compatness the convergence of IPFP scheme \eqref{eq:IPFPiteration}. \medskip

\noindent
\emph{Connection with Optimal Transport Theory:} the problem \eqref{intro:mainKL} allow us to create very efficient numerical scheme approximating solutions to the Monge-Kantorovich formulation of optimal transport and its many generalizations. Indeed, notice that we can rewrite \eqref{intro:mainKL} as a functional given by the Monge-Kantorovich formulation of Optimal Transport with a cost function $c$ plus an Entropic regularization parameter

\begin{align}\label{intro:compOTOTep}
\OTep(\rho_1,\rho_2) &=  \min_{\gamma\in \Pi(\rho_1,\rho_2)}\ep\int_{X\times Y}\gamma\log\left(\dfrac{\gamma}{k}\right)d(\rho_1\otimes\rho_2) \nonumber \\
&=  \min_{\gamma\in \Pi(\rho_1,\rho_2)}\int_{X\times Y}cd\gamma + \ep\int_{X\times Y}\gamma\log\gamma\, d(\rho_1\otimes\rho_2).
\end{align}

In particular, one can show that \cite{CarDuvPeySch,LeoGamma,Mik04} if $(\gammaep)_{\ep\geq 0}$ is a sequence of minimizers of the above problem, then $\gammaep$ converges when $\ep\to 0$ to a solution of the Optimal Transport $(\ep=0)$. More precisely, let us define the functionals $C_k,C_0:\P(X\times Y)\to \R\cup\lbrace +\infty\rbrace$
\[
C_{k}(\gamma) = \begin{cases}
       \int_{X\times Y}c d\gamma +\ep_k\int_{X\times Y}\rho_{\gamma}\log\rho_{\gamma}d(\rho_1\otimes\rho_2) &\text{ if } \gamma \in \Pi(\rho_0,\rho_1) \\
       +\infty &\text{ otherwise }\\ 
     \end{cases},
\] 
\[
C_{0}(\gamma) = \begin{cases}
       \int_{X\times Y}c d\gamma  &\text{ if } \gamma \in \Pi(\rho_0,\rho_1) \\
       +\infty &\text{ otherwise }\\ 
     \end{cases}.
\] 	
Then in \cite{CarDuvPeySch, LeoSurvey,Mik04} it is shows that the sequence of functionals $(C_k)_{k\in\N}$ $\Gamma-$converges to $C_0$ with respect to the weak convergence of measures. In particular the minima and minimal values are converging and so, in particular if $c(x,y) = d(x,y)^p$, then
\[
\lim_{k\to+\infty}\operatorname{OT}^p_{\ep_k}(\rho_1,\rho_2) = W^p_p(\rho_1,\rho_2),
\]
where $W_p(\rho_1,\rho_2)$ is the $p$-Wasserstein distance between $\rho_1$ and $\rho_2$,
\[
W^p_p(\rho_1,\rho_2) = \min_{\gamma\in\Pi(\rho_1,\rho_2)}\int_{X\times Y}d^p(x,y)d\gamma(x,y).
\]
\begin{figure}[http]\label{fig:plan}
	\centering
	\includegraphics[ scale=0.24]{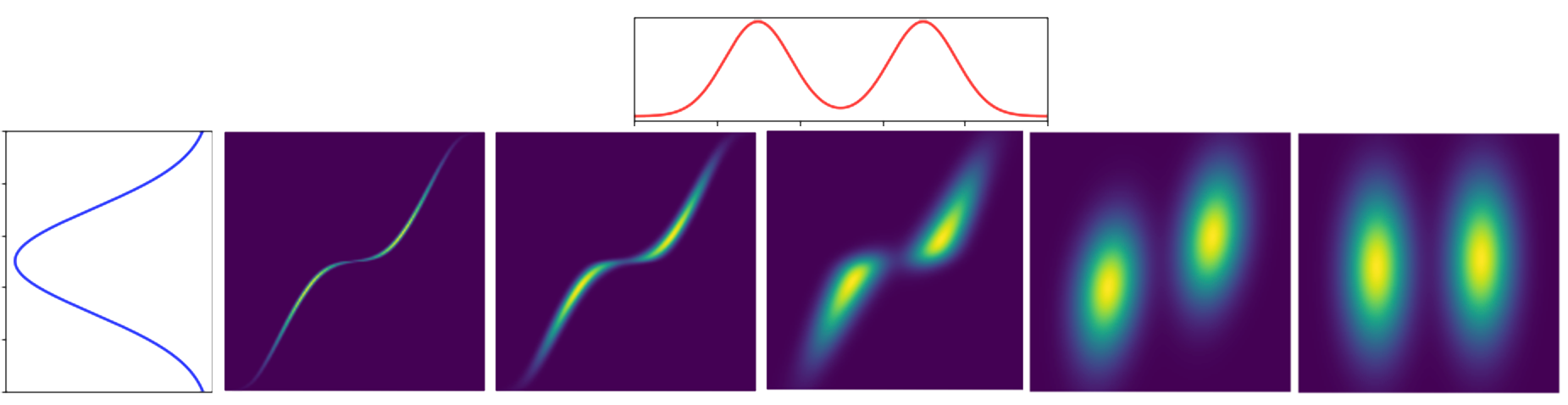}
	\caption{Support of the optimal coupling $\gammaep$ in \eqref{intro:mainKL} for the one-dimensional distance square cost with different values of $\epsilon = 10^{-1},1,10,10^{2},10^{3}$: the densities $\rho_1 \sim N(0,5)$ (blue) and $\rho_2 = \frac{1}{2}\eta_1 + \frac{1}{2}\eta_2$ is a mixed Gaussian (red), where $\eta_1 \sim N(-2,0.8)$ and $\eta_2 \sim N(2,0.7)$. The numerical computations are done using the POT library, \cite{POT}.}
	\label{fig}
\end{figure}

In the context of Optimal Transport Theory, the entropic regularization was introduced by A. Galichon and B. Salani\'e \cite{GalSal10} to solve matching problems in economics; and by M. Cuturi \cite{Cut} in the context of machine learning and data sciences. Both seminal papers received renewed attention in understanding the theoretical aspects of  \eqref{intro:compOTOTep} as well as had a strong impact in imagining, data sciences and machine learning communities due to the efficiency of the Sinkhorn algorithm.

Sinkhorn algorithm provides an efficient and scalable approximation to optimal transport. In particular, by an appropriate choice of parameters, the Sinkhorn algorithm is in fact a near-linear time approximation for computing OT distances between discrete measures \cite{AltWeeRig17}. However, as studied in \cite{Dud69, BacWee19}, the Wasserstein distance suffer from the so-called \textit{curse of dimensionality}. We refer to the recent book \cite{CutPeyBook} written by M. Cuturi and G. Peyr\'e for a complete presentation and references on computational optimal transport.

\subsection{Main contributions}

\quad In order to study the regularity of Entropic-potentials, we introduce the dual (Kantorovich) functional 
\[
D_{\ep}(u,v) = \int_X u(x)d\rho_1(x) + \int_Y v(y)d\rho_2(y) - \ep\int_{X\times Y} e^{\frac{u(x)+v(y)-c(x,y)}{\ep}}d(\rho_1\otimes\rho_2).
\]
\quad The Kantorovich duality of \eqref{intro:mainKL} is given by the following variational problem (see Proposition \ref{prop:duality2N})
\be\label{intro:entdual}
\OTep(\rho_1,\rho_2) = \sup_{u\in C_b(X),v\in C_b(Y)}D_{\ep}(u,v) +\ep.
\ee

The Entropy-Kantorovich duality \eqref{intro:entdual} appeared, for instance, in \cite{CarLab18, FeyFXVAmaPey, GerKauRajEnt, GigTamBB18,GigTam18,LeoSurvey}. The firsts contributions of this paper are (i) prove the existence of maximizers $u^*$ and $v^*$ (up to translation) in \eqref{intro:entdual} in natural spaces; (ii) show that the Entropy-Kantorovich potentials inherit the same regularity of the cost function (see the precise statement in Proposition \ref{prop:EstPot} and Theorem \ref{thm:kanto2Nmax}). 

We then link $u^*$ and $v^*$ to the solution of the Schr\"{o}dinger problem; as a byproduct of our results we are able to provide an alternative proof of the convergence of the Sinkhorn algorithm in the $2$-marginal case via a purely optimal transportation approach (Theorem \ref{thm:convIPFP}), seeing it as an alternate maximization procedure. The strength of this proof is that it can be easily generalized to the multi-marginal setting (Theorem \ref{thm:convIPFPNmarg}). 

\subsection{Summary of results and main ideas of the proofs}
\quad Our approach follows ideas from Optimal Transport and relies on the study of the duality (Kantorovich) problem \eqref{intro:entdual} of \eqref{intro:mainKL}. Analogously to the optimal transport case, if one assume some regularity (boundedness, uniform continuity, concavity) of the cost function $c$, then we can obtain the same type of regularity of the Entropy potentials $u$ and $v$. 

The relation between solution of the dual problem \ref{intro:entdual} and the Entropic-Potentials solving the Schr\"odinger system was already pointed out by C. L\'eonard \cite{LeoSurvey}. From our knowledge, the direct proof of existence of maximizers in \eqref{intro:entdual} a new result. 

Our approach to obtain the existence of Entropic-Kantorovich potentials, follow the direct method of Calculus of Variations. The key idea in the argument is to define a generalized notion of $c$-transform in the Schr\"odinger Bridge case, namely the $(c,\ep)$-transform. The main duality result, in the most general case where we assume only that $c$ is bounded, is given by the Theorem \ref{thm:kanto2Nmax} and stated below.

\begin{teo} Let $(X,d_X)$, $(Y,d_Y)$ be Polish spaces, $c:X\times Y\to \R$ be a Borel bounded cost, $\rho_1 \in \P(X)$, $\rho_2 \in \P(Y)$ be probability measures and $\ep>0$ be a positive number. Then the supremum in \eqref{intro:entdual} is attained for a unique couple $(u_0,v_0)$ (up to the trivial tranformation $(u,v) \mapsto (u+a,v-a)$). Moreover we also have 
$$u_0 \in L^{\infty}(\rho_1) \quad \text{ and } \quad v_0 \in L^{\infty}(\rho_2)$$
and we can choose the maximizers such that $\|u_0\|_{\infty} ,\|v_0\|_{\infty} \leq \frac 32 \|c\|_{\infty}$.
\end{teo}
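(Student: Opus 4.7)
The plan is to apply the direct method of the calculus of variations to the concave dual functional $D_\varepsilon$, after restricting the search to a compact family of $(c,\varepsilon)$-transforms. For a measurable $u\colon X\to \mathbb{R}$ define
$$u^{(c,\varepsilon)}(y):=-\varepsilon\log\int_X e^{(u(x)-c(x,y))/\varepsilon}\,d\rho_1(x),$$
and symmetrically for functions on $Y$. Differentiating in $v$ shows that $v\mapsto D_\varepsilon(u,v)$ is strictly concave with unique maximizer $v=u^{(c,\varepsilon)}$, so replacing $v$ by the transform never decreases $D_\varepsilon$. The functional is invariant under the trivial shift $(u,v)\mapsto(u+a,v-a)$, so existence and uniqueness must be read modulo this one-parameter family.

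Next I establish regularity. The pointwise bound $|c(x,y_1)-c(x,y_2)|\le 2\|c\|_\infty$ plugged into the log-ratio yields $\operatorname{osc}(u^{(c,\varepsilon)})\le 2\|c\|_\infty$, and symmetrically for the transform on $Y$. Elementary $\sup$/$\inf$ estimates of $u=v^{(c,\varepsilon)}$ give, in addition,
$$\sup u+\inf v\le\|c\|_\infty,\qquad \sup v+\inf u\ge -\|c\|_\infty.$$
Starting from any maximizing sequence, apply two successive $(c,\varepsilon)$-transforms: each step improves $D_\varepsilon$, and the outcome $(u_n,v_n)$ has each component a $(c,\varepsilon)$-transform, hence oscillation at most $2\|c\|_\infty$. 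Writing the ranges as $[m_{u_n}-A_{u_n},m_{u_n}+A_{u_n}]$ and $[m_{v_n}-A_{v_n},m_{v_n}+A_{v_n}]$ with $A_{u_n},A_{v_n}\le\|c\|_\infty$, the two displayed inequalities force $|m_{u_n}+m_{v_n}|\le \|c\|_\infty + A_{v_n}-A_{u_n}$. Picking the translation $a_n$ that equalizes $|m_{u_n}+a_n|+A_{u_n}$ with $|m_{v_n}-a_n|+A_{v_n}$ yields the common bound $(|m_{u_n}+m_{v_n}|+A_{u_n}+A_{v_n})/2\le \tfrac{3}{2}\|c\|_\infty$ for both $\|u_n\|_\infty$ and $\|v_n\|_\infty$.

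The sequence is now bounded in $L^\infty$ by $\frac{3}{2}\|c\|_\infty$. Banach-Alaoglu gives a weak-$*$ subsequential limit $(u_0,v_0)$ inheriting the same bounds; the linear terms of $D_\varepsilon$ pass to the limit directly. For the exponential term I extract further via Koml\'os' theorem: the Ces\`aro averages $\tilde u_n:=\tfrac{1}{n}(u_1+\dots+u_n)$ converge $\rho_1$-a.e.\ to $u_0$, and similarly $\tilde v_n\to v_0$ $\rho_2$-a.e. Concavity of $D_\varepsilon$ keeps the averaged pair maximizing, and bounded convergence (using the uniform $L^\infty$ bound on the integrand) then identifies its limit with $D_\varepsilon(u_0,v_0)$, so $(u_0,v_0)$ attains the supremum within the required $L^\infty$-ball.

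Uniqueness up to the trivial shift follows from strict convexity of $\exp$: any two maximizers would, by concavity, produce a strictly larger value at their midpoint unless $u_1+v_1=u_2+v_2$ holds $(\rho_1\otimes\rho_2)$-a.e., which combined with the marginal consistencies from the Schr\"odinger system pins $u_1-u_2$ to a constant. The main technical obstacle is the passage to the limit in the exponential integrand of the previous paragraph: weak-$*$ convergence in $L^\infty$ does not commute with $\exp$, so one genuinely needs the Koml\'os-type extraction (or an alternative argument exploiting the explicit $(c,\varepsilon)$-transform representation) rather than soft functional-analytic tools.
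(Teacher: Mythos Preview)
Your proof is correct, and the derivation of the $\tfrac32\|c\|_\infty$ bound via oscillation and midpoint balancing is a clean variant of the paper's Lemma~\ref{lemma:betterpotentials}. The overall architecture---direct method after reducing to a bounded family of $(c,\varepsilon)$-transforms, then strict concavity for uniqueness---matches the paper exactly.

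The one place where you work harder than necessary is the passage to the limit in the exponential term. You write that ``weak-$*$ convergence in $L^\infty$ does not commute with $\exp$, so one genuinely needs the Koml\'os-type extraction''. This is not quite right: one does not need the \emph{equality} $\int e^{f_n}\to\int e^f$, only the \emph{inequality}
\[
\liminf_{n\to\infty}\int_{X\times Y} e^{(u_n+v_n-c)/\varepsilon}\,d(\rho_1\otimes\rho_2)\;\ge\;\int_{X\times Y} e^{(\bar u+\bar v-c)/\varepsilon}\,d(\rho_1\otimes\rho_2),
\]
because this term enters $D_\varepsilon$ with a minus sign. That inequality is immediate from convexity of $t\mapsto e^t$: for the weak(-$*$) limit $f$ of the bounded sequence $f_n:=(u_n+v_n-c)/\varepsilon$ one has $e^{f_n}\ge e^{f}+e^{f}(f_n-f)$ pointwise, and integrating kills the linear remainder in the limit since $e^{f}\in L^1$. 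This is precisely what the paper does, in one line, after Banach--Alaoglu. Your Koml\'os extraction plus Ces\`aro averaging plus bounded convergence is a valid alternative, but it is machinery you do not need here; the gain of the paper's route is brevity and the avoidance of any a.e.\ subsequence bookkeeping.
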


\noindent
\emph{On the $(c,\ep)-$transform:} Given a measurable function $u:X\to\R$ such that $\int_{X}e^{u/\ep}d\mu <+\infty$, we defined the $(c,\ep)$-transform of $u$ by
\[
\ucep(y) = -\ep\log\left(\int_{X}e^{(u(x)-c(x,y))/\ep}d\mu(x)\right).
\]

One can show that this operation is well defined and, moreover, $D_{\ep}(u,\ucep) \geq  D_{\ep}(u,v), \forall u,v$ and $D_{\ep}(u,\ucep) =  D_{\ep}(u,v) \text{ if and only if } v = \ucep$ (lemma \ref{lemma:dual}). If we assume additionally regularity for the cost function $c$, for instance that $c$ is $\omega$-continuous or that it is merely bounded, the $(c,\ep)$-transform is actually a compact operator, respectively form $L^1(\rho_1) $ to $ C(Y)$ and from $L^{\infty}(\rho_1)$ to $L^p(\rho_2)$ (Proposition \ref{prop:EstPot}). \medskip

\noindent
\emph{IPFP/Sinkhorn algorithm:} As a byproduct of the above approach to the duality, we present an alternative proof of the convergence of the IPFP/Sinkhorn algorithm. The main idea in our proof is that we can rewrite the IPFP iteration substituting $a^n = \exp(u^n/\ep)$ and $b^n = \exp(v^n/\ep)$; in these new variables the iteration becomes 
\[
\begin{array}{lcl}
\ds v^n(y)/\ep = & - \log\left(\int_X k(x,y)\otimes \frac{u^{n-1}(x)}{\ep}d\rho_1\right)  \\
\ds u^n(x)/\ep  = & - \log\left(\int_Y  k(x,y)\otimes \frac{v^{n}(x)}{\ep}d\rho_2\right) 
\end{array}.
\]
Or, $v^n(y) = (u^{(n-1)})^{(c,\ep)}$ and $u^n(y) = (v^n)^{(c,\ep)}$. In particular we can interpret the IPFP in optimal transportation terms: at each step the Sinkhorn iterations \eqref{eq:IPFPiteration} are equivalent to take the $(c,\ep)$-transforms alternatively and therefore the IPFP can be seen as an alternating maximizing procedure for the dual problem. Therefore, using the aforementioned compactness, it is easy to show that $u^n$ and $v^n$ converge to to the optimal solution of the Kantorovich dual problem when $n\to+\infty$. 
\begin{teo}
Let $(X,d_X)$ and $(Y,d_Y)$ be Polish metric spaces, $\rho_1 \in \P(X)$ and $\rho_2\in\P(Y)$ be probability measures and $c:X\times Y\to \R$ be a Borel bounded cost. If $(a^n)_{n\in\N}$ and $(b^n)_{n\in\N}$ are the IPFP sequences defined in \eqref{eq:IPFPiteration}, then there exists $\lambda_n >0$ such that
\[
a^n/\lambda_n\to a \text{ in } L^p(\rho_1) \quad \text{ and } \quad \lambda_n b^n\to b \text{ in } L^p(\rho_2), \quad 1\leq p <+\infty,
\] 
for $a,b$ that solve the Schr\"{o}dinger system. In particular, the sequence $\gamma^n = a^nb^n\ka$ converges in $L^p(\rho_1\otimes\rho_2)$ to $\gammaep_{opt}$ in \eqref{intro:mainKL}, $1\leq p <+\infty$. 
\end{teo}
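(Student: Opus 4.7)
The plan is to reformulate the IPFP iteration \eqref{eq:IPFPiteration} in logarithmic variables and interpret it as an alternating maximization of the dual functional $D_{\ep}$. Setting $u^n := \ep \log a^n$ and $v^n := \ep \log b^n$, a direct computation turns \eqref{eq:IPFPiteration} into
\[
v^n = (u^{n-1})^{(c,\ep)}, \qquad u^n = (v^n)^{(c,\ep)}.
\]
By Lemma \ref{lemma:dual}, each $(c,\ep)$-transform step is precisely the partial maximization of $D_{\ep}$ in the remaining variable, so the real sequence $n \mapsto D_{\ep}(u^n,v^n)$ is non-decreasing and bounded above by $\OTep(\rho_1,\rho_2) - \ep$; in particular it converges to $\sup D_{\ep}$.

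Next I would handle the translation invariance $(u,v) \mapsto (u+s,v-s)$ by choosing $\lambda_n > 0$ such that the normalized potentials $\tilde u^n := u^n - \ep \log \lambda_n$ and $\tilde v^n := v^n + \ep \log \lambda_n$ satisfy, say, $\int_X \tilde u^n \, d\rho_1 = 0$. Since $c$ is bounded, Proposition \ref{prop:EstPot} yields uniform $L^{\infty}$ bounds on $\tilde u^n$ and $\tilde v^n$ (once the iteration has run at least once), and even the relative compactness of these sequences in every $L^p$. Extracting a subsequence along which $(\tilde u^{n_k}, \tilde v^{n_k}) \to (u^*,v^*)$ strongly in $L^p$, the continuity of the $(c,\ep)$-transform together with $D_{\ep}(\tilde u^{n_k},\tilde v^{n_k}) \to \sup D_{\ep}$ forces $(u^*,v^*)$ to be a maximizer of $D_{\ep}$; by the uniqueness part of Theorem \ref{thm:kanto2Nmax}, this maximizer is the unique normalized pair $(u_0,v_0)$. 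Since every subsequential limit is the same, the whole normalized sequence converges in $L^p$, and exponentiating gives $a^n/\lambda_n = e^{\tilde u^n/\ep} \to e^{u_0/\ep} =: a$ in $L^p(\rho_1)$ and similarly $\lambda_n b^n \to b := e^{v_0/\ep}$ in $L^p(\rho_2)$. The pair $(a,b)$ solves the Schr\"odinger system via the characterization \eqref{intro:SchSys}.

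For the last claim, the product $a^n b^n = (a^n/\lambda_n)(\lambda_n b^n)$ does not depend on the choice of the normalization, and since both factors are uniformly $L^{\infty}$-bounded and converge in $L^p$ of the marginals, $\gamma^n = a^n b^n \ka$ converges in $L^p(\rho_1 \otimes \rho_2)$ to $ab\,\ka = \gammaep_{opt}$. The main obstacle I anticipate is passing to the limit inside the nonlinear $(c,\ep)$-transform: weak convergence of $\tilde u^n$ is not enough to identify $v^* = (u^*)^{(c,\ep)}$, which is why the \emph{compactness} (strong $L^p$ precompactness) statement in Proposition \ref{prop:EstPot} is crucial and must be invoked in both directions of the alternating scheme.
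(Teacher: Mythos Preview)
Your overall strategy is the paper's strategy: pass to log-variables, read the IPFP as alternating $(c,\ep)$-transforms, use the compactness of $\Fcep$ from Proposition~\ref{prop:EstPot} to extract a strongly convergent normalized subsequence, and identify the limit with the unique maximizer. The normalization you choose ($\int \tilde u^n\,d\rho_1=0$) is different from the paper's but harmless.

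There is, however, a genuine gap in the logic. From monotonicity and the bound $D_\ep(u^n,v^n)\le \OTep(\rho_1,\rho_2)-\ep$ you only get that $D_\ep(u^n,v^n)$ converges to \emph{some} limit $L$, not that $L=\sup D_\ep$. Your next step, ``$D_{\ep}(\tilde u^{n_k},\tilde v^{n_k}) \to \sup D_{\ep}$ forces $(u^*,v^*)$ to be a maximizer'', therefore rests on an unproven (and a priori nontrivial) claim; alternating maximization of a concave functional need not reach the global maximum in general, and nothing you have written so far rules out $L<\sup D_\ep$.

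The paper closes exactly this gap with a vanishing-increment argument: monotone and bounded implies $D_\ep(u^{n+1},v^{n+1})-D_\ep(u^n,v^n)\to 0$. Along a subsequence of fixed parity with $(\tilde u^{n_k},\tilde v^{n_k})\to(u^*,v^*)$, the continuity of $\Fcep$ and of $D_\ep$ then yields
\[
D_\ep\bigl((v^*)^{(c,\ep)},v^*\bigr)-D_\ep(u^*,v^*)=\lim_{k}\bigl[D_\ep(u^{n_k+1},v^{n_k+1})-D_\ep(u^{n_k},v^{n_k})\bigr]=0,
\]
so the strict part \eqref{est:optcond2} of Lemma~\ref{lemma:dual} gives $u^*=(v^*)^{(c,\ep)}$; shifting the index by one more step gives $v^*=(u^*)^{(c,\ep)}$. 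Only \emph{then}, via Proposition~\ref{prop:equiv_comp} ($2\Rightarrow 1$), does one conclude that $(u^*,v^*)$ is the maximizer and hence that $L=\sup D_\ep$. You already have all the ingredients (strong $L^p$ compactness, continuity of $\Fcep$); you just need to reorder them so that the fixed-point identity for $(u^*,v^*)$ is derived first, and the value $\sup D_\ep$ is a consequence rather than an assumption.
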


We recall that the argument in original proof of convergence of the Sinkhorn algorithm \cite{FraLor89} (also in \cite{CheGeoPav}) relies on defining the Hilbert metric on the projection cone of the Sinkhorn iterations. The authors show that the Sinkhorn iterates are a contraction under this metric and therefore the procedure converges. This proof has the advantage of providing automatically the rate of convergence of the iterates; however it is not easily extendable in the several marginals case. 

Our approach instead can be extended to obtain the existence and convergence results also in the multi-marginal setting:

\begin{teo}
Let $(X_i,d_{X_i})$ be Polish metric spaces and $\rho_i \in \P(X_i)$ be probability measures, for $i \in \lbrace 1,\dots, N \rbrace$ and $c:X_1\times \dots \times X_N\to \R$ be a Borel bounded cost If $(a_i^n)_{n\in\N}$ are the multi-marginal IPFP sequences  that will be defined   \eqref{eq:IPFPsequenceN}, then there exist $\lambda^n_i >0$ with $\prod_i \lambda^n_i=1$ such that
\[
a_i^n/\lambda_i^n \to a_i \text{ in } L^p(\rho_i) \quad \text{ for all } i \in \lbrace 1,\dots, N \rbrace, 1\leq p <+\infty,
\] 
where $(a_1, \ldots, a_N)$ solve the Schr\"{o}dinger system. In particular, the sequence $\gamma_N^n = \otimes^N_{i=1}a_i^n\ka$ converges in $L^p(\otimes^N_{i=1} \rho_i)$, $1\leq p <+\infty$, to the optimal coupling $\gammaep_{N, opt}$ solving the multi-marginal Schr\"odinger Bridge problem to be defined in \eqref{eq:primalSchrMult}. 
\end{teo}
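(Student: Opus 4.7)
\emph{Plan.} The idea is to transplant the strategy used for $N=2$ to the multi-marginal setting: rewrite the multi-marginal IPFP as an alternating maximization of the multi-marginal Kantorovich dual $D_\ep$, and then use the compactness of the (multi-marginal) $(c,\ep)$-transform to extract a limit that must solve the Schr\"odinger system. First, I pass to logarithmic variables $u_i^n = \ep \log a_i^n$. The $n$-th update of $a_i^n$ becomes a $(c,\ep)$-transform taken in the $i$-th variable with the other currently frozen potentials $(u_j)_{j\neq i}$; by the multi-marginal analogue of Lemma \ref{lemma:dual}, each such update weakly increases $D_\ep(u_1^n,\ldots,u_N^n)$. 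Since the dual energies are bounded above by $\OTep-\ep$, they form a convergent monotone sequence and the successive increments tend to $0$.

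Next I normalize to remove the trivial ambiguity $(u_1,\ldots,u_N)\mapsto(u_1+\alpha_1,\ldots,u_N+\alpha_N)$ with $\sum_i\alpha_i=0$, by choosing multiplicative factors $\lambda_i^n>0$ with $\prod_i\lambda_i^n=1$ that impose, say, $\int u_i^n\,d\rho_i=0$ for $i\geq 2$. Since each $u_i^n$, once it has been updated at least once, is itself a $(c,\ep)$-transform, the multi-marginal version of Proposition \ref{prop:EstPot} and Theorem \ref{thm:kanto2Nmax} yields a uniform $L^\infty$-bound $\|u_i^n\|_\infty \leq \tfrac32\|c\|_\infty$ and in fact $L^p$-precompactness of the family $\{u_i^n\}_{n\in\N}$ for every $1\leq p<\infty$. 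A diagonal extraction then produces a subsequence along which $u_i^n \to u_i^\infty$ in $L^p(\rho_i)$ for each $i$.

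To identify the limit, observe that the integrand defining the $(c,\ep)$-transform is uniformly bounded by $e^{2\|c\|_\infty/\ep}$, so dominated convergence shows that the $(c,\ep)$-transform is continuous along $L^p$-convergent sequences of inputs. Combined with the vanishing of the dual increments, this forces each coordinate $u_i^\infty$ to already equal the $(c,\ep)$-transform of the remaining $(u_j^\infty)_{j\neq i}$; hence $(u_1^\infty,\ldots,u_N^\infty)$ maximizes $D_\ep$ and the corresponding $a_i=\exp(u_i^\infty/\ep)$ solve the multi-marginal Schr\"odinger system. Uniqueness of the maximizer (modulo the additive ambiguity already fixed by the normalization) upgrades the subsequential convergence to convergence of the whole sequence. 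Exponentiating and invoking dominated convergence once more yields the claimed $L^p$-convergence of $\gamma_N^n = \otimes_i a_i^n\,\ka$ to $\gammaep_{N,opt}$.

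The main difficulty I anticipate is ensuring that \emph{every} coordinate equation of the Schr\"odinger system is satisfied in the limit, even though only one index is updated per iteration. The resolution is precisely the third step: the monotonicity and convergence of the dual energies forces each would-be update at the limit to produce no further increase, hence each $u_i^\infty$ is already a fixed point of its $(c,\ep)$-transform, and the $L^p$-continuity of that transform turns this heuristic into an honest identification. Extending the uniform bounds and compactness of Proposition \ref{prop:EstPot} from two to $N$ marginals is essentially bookkeeping: one rewrites any multi-marginal $(c,\ep)$-transform as a two-marginal one between $X_i$ and $\prod_{j\neq i}X_j$ against the tensor-product reference measure $\otimes_{j\neq i}\rho_j$ and a new bounded cost $\tilde c(x_i,(x_j)_{j\neq i}) = c(x_1,\ldots,x_N)-\sum_{j\neq i}u_j(x_j)$, which preserves the boundedness needed for the compactness argument.
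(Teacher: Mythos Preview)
Your proposal is essentially the paper's own proof: rewrite IPFP as alternating $(c,\ep)$-transforms, use monotonicity of $D_\ep^N$ bounded above by $\OTNep-\ep$, normalize away the translation ambiguity, invoke compactness of the transform to extract an $L^p$-convergent subsequence, and identify the limit via continuity of the transform together with the vanishing of dual increments. Two small points worth tightening: the bound $\tfrac32\|c\|_\infty$ is the two-marginal one---in the multi-marginal case the paper gets $3\|c\|_\infty$ via the projection $P$ of Lemma~\ref{lem:P} (your mean-zero normalization also works but gives a constant depending on $N$)---and the step showing that \emph{all} $N$ fixed-point equations hold at the limit needs the pigeon-hole/inductive argument the paper spells out (restrict to a subsequence with fixed residue modulo $N$, then propagate the convergence forward one coordinate at a time using continuity of $\Ficep$), since along an arbitrary subsequence the index being updated at step $n_k$ varies with $k$ and your phrase ``each would-be update at the limit produces no further increase'' does not by itself bound that update from above by the limiting value $L$.
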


\subsection{Organization of the paper}

\quad \quad The remaining part of the paper is organized as follows: Section \ref{sec:RegularityEntr} contains the main structural results of the paper, namely Proposition \ref{prop:EstPot} and Theorem \ref{thm:kanto2Nmax}. In particular, we define the main tools for showing the existence of maximizer of the Entropic-Kantorovich problem and prove regularity results of the Entropic-Kantorovich potentials via the $(c,\ep)$-transform.

In the section \ref{sec:convergenceIPFP}, we apply the above results to prove convergence of the Sinkhorn algorithm purely via the compactness argument and alternating maximizing procedue (Theorem \ref{thm:convIPFP}) and, in section \ref{sec:multimarginal}, we extend the main results of the paper to the multi-marginal Schr\"odinger Bridge problem, including convergence of Sinkhorn algorithm in the multi-marginal case (Theorem \ref{thm:convIPFPNmarg}). 

\subsection{The role of the reference measures}

\quad \quad In this subsection, we simply give a technical remark, discussing the role of the reference measures $\refmx$ and $\refmy$. We stress that all the results of the paper can be extended while considering a kind of entropic optimal transport problem, where the penalization occurs with respect to some reference measures $\refmx, \refmy$.

For $\ep>0$, we in particular may look at the problem 
\begin{align}\label{eq:Sepm}
\Sep(\rho_1,\rho_2; \refmx, \refmy) &:=  \min_{\gamma\in \Pi(\rho_1,\rho_2)} \left\{ \int_{X\times Y}c \, d\gamma + \ep  \KL( \gamma | \refmx\otimes\refmy)  \right\} \nonumber \\
&= \min_{\gamma\in \Pi(\rho_1,\rho_2)} \ep \KL ( \gamma | \mathcal{K}). 
\end{align}
where $\mathcal{K}$ is the Gibbs Kernel $\mathcal{K} = e^{-\frac{c}{\ep}}\refmx\otimes\refmy$. 

%
While having a reference measure in some situations can be quite useful (for example the Schr\"{o}dinger problem is set with $\refm_1=\refm_2 = \mathcal{L}^d$), in other it is the opposite, for example when we are considering $\rho_1, \rho_2$ to be sums of diracs. In those cases it is a much better solution to consider $\refmx=\rho_1$ and $\refmy=\rho_2$. Notice that in this case, we have that
\[
\OTep(\rho_1,\rho_2) = \Sep(\rho_1,\rho_2;\rho_1,\rho_2).
\]

Now we will see that in fact $\OTep$ is a \emph{universal} reduction for $\Sep$, meaning that we can always assume $\refmx=\rho_1$ and $\refmy=\rho_2$:

\begin{lemma} Let $(X,d,\refmx)$ and $(Y,d,\refmx)$ be a Polish metric measure spaces and $c:X\times Y\to [0,+\infty[$ be a cost function. Assume that $\rho_1 \in \P(X)$, $\rho_2 \in \P(Y)$. Then we have
$$ \Sep(\rho_1, \rho_2; \refmx, \refmy) = \OTep ( \rho_1, \rho_2) +\ep \KL(\rho_1| \refmx) +  \ep \KL(\rho_2|\refmy);$$
moreover, whenever one of the two sides is finite the minimizers of $\Sep$ and $\OTep$ are the same.
\end{lemma}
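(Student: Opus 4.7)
\medskip

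\noindent\textbf{Proof plan.} The strategy is the chain rule for the Kullback--Leibler divergence: on any transport plan $\gamma\in\Pi(\rho_1,\rho_2)$, the Radon--Nikodym derivative with respect to $\refmx\otimes\refmy$ factorises through $\rho_1\otimes\rho_2$, and the ``correction'' depends on $\gamma$ only through its marginals, which are fixed.

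First I would rewrite the functionals in the KL--penalised form: using $\mathcal{K}=e^{-c/\ep}\refmx\otimes\refmy$ and the analogous kernel $\mathcal{K}_0=e^{-c/\ep}\rho_1\otimes\rho_2$, one checks directly that for any $\gamma\ll\refmx\otimes\refmy$,
\[
\ep\,\KL(\gamma\mid\mathcal{K})=\int c\,d\gamma+\ep\,\KL(\gamma\mid\refmx\otimes\refmy),
\]
and an identical identity holds for $\mathcal{K}_0$. So it suffices to relate $\KL(\gamma\mid\refmx\otimes\refmy)$ and $\KL(\gamma\mid\rho_1\otimes\rho_2)$.

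Next, assume for now that $\rho_1\ll\refmx$ and $\rho_2\ll\refmy$ (otherwise the right-hand side is $+\infty$, and I will close this case at the end). For any $\gamma\ll\rho_1\otimes\rho_2$ we may write the chain rule
\[
\frac{d\gamma}{d(\refmx\otimes\refmy)}=\frac{d\gamma}{d(\rho_1\otimes\rho_2)}\cdot\frac{d\rho_1}{d\refmx}\cdot\frac{d\rho_2}{d\refmy}.
\]
Taking logarithms and integrating against $\gamma$, the cross terms collapse because $\gamma$ has marginals $\rho_1$ and $\rho_2$, yielding
\[
\KL(\gamma\mid\refmx\otimes\refmy)=\KL(\gamma\mid\rho_1\otimes\rho_2)+\KL(\rho_1\mid\refmx)+\KL(\rho_2\mid\refmy).
\]
Combined with the first display, this shows that on $\Pi(\rho_1,\rho_2)$ the two functionals $\ep\KL(\cdot\mid\mathcal{K})$ and $\ep\KL(\cdot\mid\mathcal{K}_0)$ differ by the constant $\ep\KL(\rho_1\mid\refmx)+\ep\KL(\rho_2\mid\refmy)$. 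Minimising in $\gamma$ gives the claimed identity, and since the two objectives differ by a constant the minimisers are the same.

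Finally I would handle the degenerate cases. If $\rho_i\not\ll\refmi$ for some $i$, there exists a Borel set $A$ with $\refmi(A)=0<\rho_i(A)$; any $\gamma\in\Pi(\rho_1,\rho_2)$ then charges the $\refmx\otimes\refmy$-null set $\pi_i^{-1}(A)$, so $\KL(\gamma\mid\refmx\otimes\refmy)=+\infty$ and $\Sep=+\infty$, matching the right-hand side. The only subtle point is justifying the cancellation in the chain-rule step when the integrands are not a priori in $L^1(\gamma)$; I would handle this by splitting positive and negative parts and invoking that $\log(d\rho_i/d\refmi)\in L^1(\rho_i)$ whenever $\KL(\rho_i\mid\refmi)<\infty$, which is precisely the case where the identity is non-trivial.
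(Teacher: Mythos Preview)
Your approach is the same as the paper's: both reduce the lemma to the Kullback--Leibler chain rule
\[
\KL(\gamma\mid\refmx\otimes\refmy)=\KL(\gamma\mid\rho_1\otimes\rho_2)+\KL(\rho_1\mid\refmx)+\KL(\rho_2\mid\refmy)
\]
for $\gamma\in\Pi(\rho_1,\rho_2)$, and then observe that the two objectives differ by a constant on the feasible set. Your treatment of the case $\rho_i\not\ll\refmi$ is also correct.

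There is, however, one genuine gap. You establish the chain rule only under the assumption $\gamma\ll\rho_1\otimes\rho_2$, and then assert that ``on $\Pi(\rho_1,\rho_2)$ the two functionals differ by a constant''. This is not yet justified: a priori there could be $\gamma\in\Pi(\rho_1,\rho_2)$ with $\gamma\ll\refmx\otimes\refmy$ (so the $\Sep$ integrand is finite) but $\gamma\not\ll\rho_1\otimes\rho_2$ (so the $\OTep$ integrand is $+\infty$). If such a $\gamma$ existed with finite $\KL(\gamma\mid\refmx\otimes\refmy)$, you would only get the inequality $\Sep\le\OTep+\ep\KL(\rho_1\mid\refmx)+\ep\KL(\rho_2\mid\refmy)$. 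The paper closes this gap in its auxiliary Lemma (the one immediately following) by showing directly that for $\gamma\in\Pi(\rho_1,\rho_2)$ with $\rho_i\ll\refmi$, the condition $\gamma\ll\refmx\otimes\refmy$ already forces $\gamma\ll\rho_1\otimes\rho_2$: any $(\rho_1\otimes\rho_2)$-null set is $\refmx\otimes\refmy$-a.e.\ contained in $\{g_1(x)g_2(y)=0\}$ with $g_i=d\rho_i/d\refmi$, and the marginal constraints give $\gamma\{g_1=0\}=\rho_1\{g_1=0\}=0$, and similarly for $g_2$. You should add this argument; the integrability issue you flag at the end is comparatively minor.
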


\begin{proof}
The key equality  in proving this lemma is that, whenever $\gamma \in \Gamma(\rho_1, \rho_2)$ one has
\begin{equation}\label{Eqn:equalityKL} \KL( \gamma | \refmx \otimes \refmy) = \KL(\gamma | \rho_1 \otimes \rho_2) + \KL(\rho_1 | \refmx) + \KL(\rho_2 | \refmy).
\end{equation}
While this equality is clear whenever all the terms are finite, we refer to Lemma~\ref{lem:KLsum} below for a complete proof entailing every case. From this equality we can easily get the conclusions.
\end{proof}

\begin{lemma}\label{lem:KLsum} Let $(X,\sigma_X)$ and $(Y,\sigma_Y)$ be measurable  spaces. Assume that $\gamma \in \P(X\times Y)$, $\refmx \in \P(X)$ and $\refmy\in \P(Y)$. Then we have
\begin{equation}\label{Eqn:equalityKLgen} \KL( \gamma | \refmx \otimes \refmy) = \KL(\gamma | \rho_1 \otimes \rho_2) + \KL(\rho_1 | \refmx) + \KL(\rho_2 | \refmy),
\end{equation}
where $\rho_1= (e_1)_{\sharp}\gamma$ and $\rho_2 = (e_2)_{\sharp}\gamma$ are the projections of $\gamma$ onto $X$ and $Y$ respectively.
\end{lemma}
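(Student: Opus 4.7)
The plan is to reduce the identity to a chain-rule computation for Radon-Nikodym derivatives, after first disposing of the degenerate cases in which at least one side is infinite. First I would check that if $\rho_1 \not\ll \refmx$ (the symmetric case in the second variable is identical), both sides of \eqref{Eqn:equalityKLgen} equal $+\infty$: the right-hand side trivially, and for the left one picks $A \subset X$ with $\rho_1(A) > 0 = \refmx(A)$, so that $\gamma(A \times Y) = \rho_1(A) > 0$ while $(\refmx \otimes \refmy)(A \times Y) = 0$, forcing $\gamma \not\ll \refmx \otimes \refmy$.

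Assume therefore $\rho_1 \ll \refmx$ and $\rho_2 \ll \refmy$, and write $f_i = d\rho_i/d\refmi$. Since $\rho_i$ is concentrated on $\{f_i > 0\}$, any coupling $\gamma$ with these marginals is concentrated on $E = \{f_1 > 0\} \times \{f_2 > 0\}$, and on $E$ the two product measures $\rho_1 \otimes \rho_2$ and $\refmx \otimes \refmy$ are mutually absolutely continuous with explicit density $f_1(x) f_2(y)$. Consequently $\gamma \ll \rho_1 \otimes \rho_2 \iff \gamma \ll \refmx \otimes \refmy$, so \eqref{Eqn:equalityKLgen} is again $+\infty = +\infty$ in the singular subcase.

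In the remaining absolutely continuous subcase, set $H = d\gamma/d(\rho_1 \otimes \rho_2)$; the chain rule then gives $d\gamma/d(\refmx \otimes \refmy) = H(x,y)\, f_1(x)\, f_2(y)$. Taking the logarithm, integrating against $\gamma$, and using that the marginals of $\gamma$ are $\rho_1$ and $\rho_2$, the three summands of $\log H + \log f_1 + \log f_2$ yield respectively $\KL(\gamma | \rho_1 \otimes \rho_2)$, $\KL(\rho_1|\refmx)$ and $\KL(\rho_2|\refmy)$, which is exactly \eqref{Eqn:equalityKLgen}.

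The main obstacle is to justify this last splitting when the densities $H, f_1, f_2$ are unbounded or accumulate near zero, since then $\log H, \log f_1, \log f_2$ have no definite sign and the decomposition of $\int \log(H f_1 f_2)\, d\gamma$ into three separate integrals is not immediate. I would dispatch this by the standard truncation argument: replace each density by its clipping into $[\delta, 1/\delta]$, for which the identity reduces to Fubini on bounded integrands, then let $\delta \to 0$ and invoke monotone convergence on the positive parts of each $\log$, while controlling the negative parts through the data-processing bound $\KL(\rho_i|\refmi) \leq \KL(\gamma|\refmx \otimes \refmy)$ (which makes the marginal entropies finite whenever the left-hand side of \eqref{Eqn:equalityKLgen} is) together with the universal lower bound $t \log t \geq -1/e$.
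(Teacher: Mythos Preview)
Your argument is correct and follows essentially the same route as the paper: first dispose of the cases where absolute continuity fails (so both sides are $+\infty$), then apply the chain rule for Radon--Nikodym derivatives and use the marginal conditions to split $\int \log(Hf_1f_2)\,d\gamma$ into the three desired pieces. Your observation that $\gamma$ is concentrated on $\{f_1>0\}\times\{f_2>0\}$ is exactly the content of the paper's set-theoretic verification that $\gamma \ll \rho_1\otimes\rho_2$ once $\gamma \ll \refmx\otimes\refmy$; the only difference is that the paper organizes the case split as ``RHS finite'' versus ``LHS finite'' rather than by absolute continuity hypotheses.

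The truncation step you propose at the end is heavier than needed. The very tools you list already give the direct argument the paper uses implicitly: from $t\log t\geq -1/e$ and finiteness of $\KL(\rho_i|\refmi)$ (via data processing) one gets $\log f_i\in L^1(\rho_i)$, hence $\log f_i\in L^1(\gamma)$ by the marginal condition; and the same lower bound plus finiteness of the left-hand side gives $\log(Hf_1f_2)\in L^1(\gamma)$. Subtracting, $\log H\in L^1(\gamma)$ as well, so the three-term split is just linearity of the integral --- no clipping or monotone convergence required.
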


\begin{proof}
First we assume the right hand side of \eqref{Eqn:equalityKLgen} is finite, and in particular this implies $\gamma \ac \rho_1 \otimes \rho_2$, $\rho_1 \ac \refmx$ and $\rho_2 \ac \refmy$. In particular we get $\gamma \ \refmx \otimes \refmy$ and  we can infer
$$ \frac { d \gamma}{ d (\refmx \otimes \refmy )} (x,y)= \frac { d \gamma}{ d (\rho_1 \otimes \rho_2 )} (x,y) \cdot \frac { d \rho_1}{ d\refmx } (x) \cdot \frac { d \rho_2 }{  d \refmy } (y).$$

We can now compute
\begin{align*}
 \KL( \gamma |\refmx \otimes \refmy) &= \int_{X \times Y} \ln \left( \frac { d \gamma}{ d (\refmx \otimes \refmy )} (x,y) \right) \, d \gamma \\
 &=\int_{X \times Y} \ln \left(  \frac { d \gamma}{ d (\rho_1 \otimes \rho_2 )} (x,y) \right) \, d \gamma +  \int_{X \times Y} \ln \left(\frac { d \rho_1}{ d\refmx } (x)\right) \, d \gamma + \int_{X \times Y} \ln \left(\frac { d \rho_2}{ d\refmy }(y)\right) \, d \gamma \\
 & = \KL(\gamma | \rho_1 \otimes \rho_2 ) + \int_X  \ln \left(\frac { d \rho_1}{ d\refmx } (x)\right)  \, d \rho_1 + \int_Y  \ln \left(\frac { d \rho_2}{ d\refmy } (y)\right)  \, d \rho_2 \\
 & = \KL(\gamma | \rho_1 \otimes \rho_2) + \KL(\rho_1 | \refmx) + \KL(\rho_2 | \refmy).
 \end{align*}
 
 We assume now  that the left hand side of \eqref{Eqn:equalityKLgen} is finite. Thanks to the fact that $\KL(F_{\sharp} \mu | F_{\sharp} \nu) \leq \KL( \mu | \nu)$ for every measurable function $F$, we immediately have that $ \KL(\rho_1 | \refmx)$ and $ \KL(\rho_2 | \refmy)$ are finite, and in particular $\rho_1 \ac \refmx$ and $\rho_2 \ac \refmy$. Now let us introduce $f = \frac {d \gamma}{d \refmx \otimes \refmy}$, $g_1 = \frac {d \rho_1}{d \refmx} $ and $g_2 =\frac { d \rho_2}{d \refmy}$; let us then consider any measurable set $A \subseteq X \times Y$ and assume that $(\rho_1 \otimes \rho_2) (A)=0$. In particular we have
 $$ \int_{X \times Y} \chi_A (x,y) g_1(x)g_2(y) \, d (\refmx \otimes \refmy) = (\rho_1 \otimes \rho_2) (A)=0;$$
 from this we deduce that $A$ is $\refmx \otimes \refmy$-essentially contained in the set $B=\{g_1(x) g_2(y) =0\} = B_x \cup B_y$, where $B_x= \{ g_1(x) =0\} \times Y$ and $B_y = X \times \{ g_2(y)=0\}$. However, by the marginal conditions, we have $\gamma(B_x)=\rho_1\{ g_1(x) =0\} =0$ and similarly $\gamma(B_y) =0$, which imply $\gamma(B)=0$. In particular we have
   \begin{align*}
    \gamma(A) &= \int_{X \times Y} \chi_A (x,y) f(x,y) \, d (\refmx \otimes \refmy) = \int_{X \times Y} \chi_{A \cap B} (x,y) f(x,y) \, d (\refmx \otimes \refmy) \\
    & \leq  \int_{X \times Y} \chi_{ B} (x,y) f(x,y) \, d (\refmx \otimes \refmy) \leq \gamma(B)=0.
    \end{align*}
    This proves that $\gamma \ac \rho_1 \otimes \rho_2$ and so we can perform the same calculation we did before to conclude.
\end{proof}

\section{Regularity of Entropic-Potentials and dual problem}\label{sec:RegularityEntr}

In this section we will treat the case where $c:X \times Y \to \R$ is a Borel \textit{bounded} cost; of course everything extends also to the case when $c \in L^{\infty}(\rho_1 \otimes \rho_2)$. Some of the results extend naturally for unbounded costs (for example (i), (ii), (v) in Proposition \ref{prop:EstPot}), but we prefer to keep the setting uniform.

\subsection{Entropy-Transform and a priori estimates}
\label{sec:bounded}

\quad We start by defining the Entropy-Transform. First, let us define the space $\Lexp$, which will be the natural space for the dual problem.

\begin{deff}[$\Lexp$ spaces] Let $\ep>0$ be a positive number and $(X,d_X)$ be a Polish space. We define the set $\Lexp(X,\rho_1)$ by  
\[
\Lexp(X,\rho_1) = \left\{ u:X \to [-\infty, \infty[ \, : \, u \text{ is a measurable function in } (X,\rho_1)  \text{ and } 0<\int_X e^{u/\ep} \, d \rho_1 < \infty \right\}.
\]
For $u \in \Lexp(X, \rho_1)$ we define also $\lambda_u:=\ep\log \left( \int_X e^{u/\ep} \, d \rho_1 \right)$.
\end{deff}

For simplicity, we will use the notation $\Lexp(\rho_1)$ instead of $\Lexp(X,\rho_1)$. Notice that it is possible that $u\in \Lexp(X,\rho_1)$ attains the value $-\infty$ in a set of positive measure, but not everywhere, because of the positivity constraint $\int_X e^{u/\ep} \, d \rho_1>0$. On the other hand, we have that $u \in \Lexp(X,\rho_1)$ implies $u^+ \in L^p(\rho_1)$ for every $ p \geq 1$, where $u^+(x):= \max\{ u(x),0\} $ denotes the positive part of $u$.

\begin{deff}[Entropic $c$-transform or $(c,\ep)$-transform]
Let $(X,d_X)$, $(Y,d_Y)$ be Polish spaces, $\ep>0$ be a positive number, $\rho_1 \in \P(X)$ and $\rho_2 \in \P(Y)$ be probability measures and let $c$ be a bounded measurable cost on $X \times Y$. 
The entropic $(c,\ep)$-transform  $\Fcep:\Lexp(\rho_1)\to L^0(\rho_2)$ is defined by
\begin{equation}\label{eq:F1}
 \Fcep ( u ) (y):= -\ep \log \left( \int_X e^{ \frac { u (x) - c(x,y) }{\ep}}  \, d \rho_1 (x) \right).
\end{equation}
Analogously, we define the $(c,\ep)$-transform $\Fcep:\Lexp(\rho_2)\to L^0(\rho_1)$ by
\begin{equation}\label{eq:F2}
\Fcep ( v ) (x):= -  \ep \log \left( \int_Y e^{ \frac { v (y) - c(x,y) }{\ep}}  \, d \rho_2 (y) \right).
\end{equation}
Whenever it will be clear we denote $\vcep=\Fcep(v)$ and $\ucep=\Fcep(u)$, in an analogous way to the classical $c$-transform. 
\end{deff}

Notice that $\Lexp(\rho_1)$ is the natural domain of definition for $\Fcep$ because if $u \not \in \Lexp(\rho_1)$ we would have either $\Fcep(u) \equiv - \infty$ or $\Fcep(u) \equiv +\infty$; moreover, thanks to the positivity constraint $\int_X e^{u/\ep} \, d \rho_1>0$ we also have $\Fcep(u)(y) \in \R$ almost everywhere. In fact we will  show that $\Fcep(u) \in L^{\infty}(\rho_2)$. 

We also remark that the $(c,\ep)$-transform is consistent with the $c$-transform when $\ep\to 0$:  $\ucep  \to  \max \lbrace u (x) - c(x,y) : x\in X \rbrace$, when $\ep\to 0$. In other words, $\ucep(y) = u^{c}(y) + O(\ep)$.

\begin{figure}[h]
	\centering
	\includegraphics[ scale=0.15]{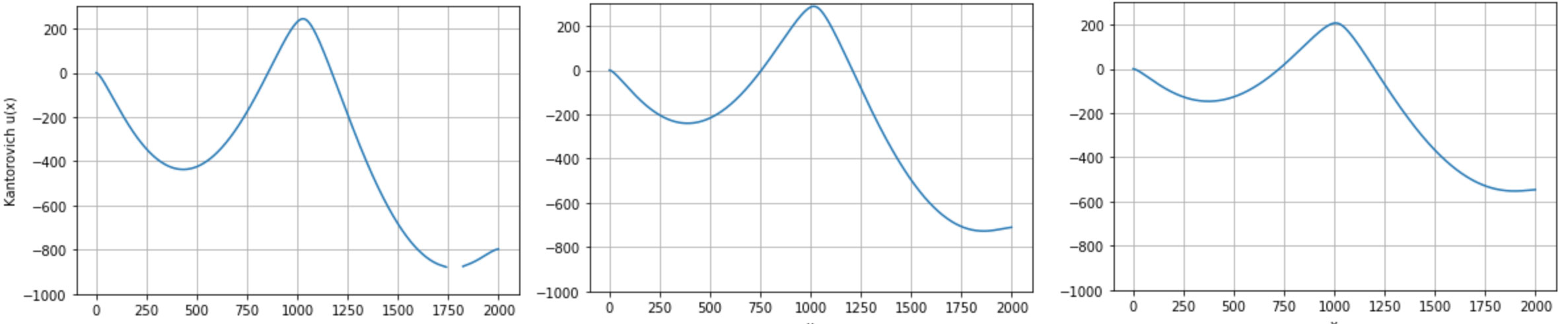}
	\caption{Entropy-Kantorovich potentials $u^{\ep}(x)-\ep \ln ( \rho_1)$ associated to the densities $\rho_1$ and $\rho_2$ for different values of the regularization parameter: $\ep_1 < \ep_2 < \ep_3$ (from left to right). The densities $\rho_1 \sim N(0,5)$ and $\rho_2 = \frac{1}{2}\eta_1 + \frac{1}{2}\eta_2$ is a mixed Gaussian, where $\eta_1 \sim N(7,0.9)$ and $\eta_2 \sim N(14,0.9)$. Notice that the values on the $y$-axis are not the same for the four figures.} 
	\label{fig}
\end{figure}

\begin{lemma}\label{lemma:F1F2} Let $(X,d_X)$, $(Y,d_Y)$ be Polish spaces, $u\in \Lexp(\rho_1),v\in \Lexp(\rho_2)$ and $\ep>0$. Then,

\begin{itemize}
\item[(i)] $\ucep(y) \in L^{\infty}(\rho_2)$ and $\vcep(x) \in L^{\infty}(\rho_1)$. More precisely,

$$-\|c\|_{\infty}-\ep \log \left( \int_X e^{ \frac { u (x)  }{\ep}}  \, d \rho_1 \right)  \leq  \ucep(y) \leq  \| c\|_{\infty} -\ep \log \left( \int_X e^{ \frac { u (x)  }{\ep}}  \, d\rho_1 \right)  $$

\item[(ii)] $\ucep(y) \in \Lexp(\rho_2)$ and $\vcep(x) \in \Lexp(\rho_1)$. Moreover $|\lambda_{\ucep} + \lambda_u| \leq \|c \|_{\infty}$.
\end{itemize}
\end{lemma}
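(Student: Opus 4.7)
The proof is essentially a direct sandwich-bound calculation exploiting the boundedness of $c$, so the plan is short and routine. I will work only with $\ucep$ since the statement for $\vcep$ is symmetric.

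First I would observe that the hypothesis $u \in \Lexp(\rho_1)$ gives a well-defined finite quantity $\lambda_u = \ep \log \int_X e^{u/\ep}\,d\rho_1 \in \R$. Since $-\|c\|_\infty \le -c(x,y) \le \|c\|_\infty$ pointwise, multiplying by $e^{u(x)/\ep} \ge 0$ and integrating in $x$ against $\rho_1$ sandwiches the inner integral in \eqref{eq:F1} between $e^{-\|c\|_\infty/\ep}\int e^{u/\ep}\,d\rho_1$ and $e^{\|c\|_\infty/\ep}\int e^{u/\ep}\,d\rho_1$. Both bounds are strictly positive, so the logarithm is well-defined; applying $-\ep\log(\cdot)$ (which reverses the inequalities and converts the exponential factor into $\mp \|c\|_\infty$) yields exactly the two-sided estimate
$$ -\|c\|_\infty - \lambda_u \;\le\; \ucep(y) \;\le\; \|c\|_\infty - \lambda_u \qquad \text{for all } y \in Y,$$
which proves (i). In particular $\ucep$ is defined pointwise (not merely almost everywhere) and bounded, so $\ucep \in L^\infty(\rho_2)$.

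For (ii), I would note that the same sandwich controls $e^{\ucep/\ep}$: from $-\|c\|_\infty - \lambda_u \le \ucep \le \|c\|_\infty - \lambda_u$ we get
$$ e^{-(\|c\|_\infty + \lambda_u)/\ep} \;\le\; e^{\ucep(y)/\ep} \;\le\; e^{(\|c\|_\infty - \lambda_u)/\ep}.$$
The right-hand bound is a finite constant, so $\int_Y e^{\ucep/\ep}\,d\rho_2 < \infty$; the left-hand bound is strictly positive, so the same integral is strictly positive. Hence $\ucep \in \Lexp(\rho_2)$. Taking $\ep \log$ of the integral bounds immediately produces $-\|c\|_\infty - \lambda_u \le \lambda_{\ucep} \le \|c\|_\infty - \lambda_u$, i.e.\ $|\lambda_{\ucep} + \lambda_u| \le \|c\|_\infty$, as required.

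I do not anticipate any real obstacle: the only point where one must be slightly careful is the positivity of the inner integral, which rules out the pathological case $\ucep \equiv +\infty$; this is exactly why $\Lexp(\rho_1)$ is defined with the strict condition $\int e^{u/\ep}\,d\rho_1 > 0$, and the estimate above uses it in the form $\lambda_u > -\infty$. The argument for $\vcep$ is identical after swapping the roles of $(X,\rho_1)$ and $(Y,\rho_2)$.
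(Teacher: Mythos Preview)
Your proposal is correct and follows essentially the same approach as the paper: both arguments use the pointwise bound $-\|c\|_\infty \le -c(x,y)\le \|c\|_\infty$ inside the integral defining $\ucep$, integrate, and then apply $-\ep\log(\cdot)$ to obtain the two-sided estimate; part (ii) is then immediate from (i). If anything, you are slightly more explicit than the paper in invoking the strict positivity condition $\int_X e^{u/\ep}\,d\rho_1>0$ built into $\Lexp(\rho_1)$ to ensure $\lambda_u\in\R$ and hence that $\ucep$ is everywhere finite.
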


\begin{proof}
If $u\in \Lexp(\rho_1)$ then
\begin{align*}
\ucep(y) &= -\ep \log\left(\int_X e^{\frac { u(x)-c(x,y) }{\ep}}d\rho_1\right)\\
&\leq -\ep \log\left(e^{\frac{ -\Vert c\Vert_{\infty}}{\ep}}\int_X e^{\frac { u(x)}{\ep}}d\rho_1\right)\\
&= \Vert c \Vert_{\infty} - \ep \log\left(\int_X e^{\frac { u(x)}{\ep}}d\rho_1\right). 
\end{align*}
Moreover, we get a lower bound for the above quantity using $c\geq -\|c\|_{\infty}$: 
\[
\ucep(y) = -\ep \log\left(\int_X e^{\frac { u(x)-c(x,y) }{\ep}}d\rho_1\right)
\geq -\|c\|_{\infty} -\ep \log\left(\int_X e^{\frac {u(x)}{\ep}}d\rho_1\right).
\]
Then we proved that $\ucep \in  L^{\infty}(\rho_2)$. The fact that $\vcep \in L^{\infty}(\rho_1)$ is analogous. This shows the $(i)$. Since $u\in \Lexp(\rho_1)$, by using the part $(i)$ we have that
\[
\int_Y e^{\frac{\ucep(y)}{\ep}}d\rho_2(y) \leq \int_Y e^{\Vert c\Vert_{\infty}/\ep}\left(\int_X e^{\frac {u(x)}{\ep}}d\rho_1(x)\right)^{-1}d\rho_2(y) < +\infty.
\]
Therefore $\ucep \in \Lexp(\rho_2)$ and in particular $\lambda_{\ucep} \leq -\lambda_u + \|c\|_{\infty}$; the other inequality follows with a similar calculation and the same holds for $\vcep$, which proves $(ii)$.

\end{proof}

Some of the following properties were already known for the softmax operator: for example in \cite{GenChiBacCutPey} and they are used in order to get \emph{a posteriori} regularity of the potentials but, up to our knowledge, were never used to get \emph{a priori} results.

Another very cleverly used properties of the $(c, \ep)$-transform are used in \cite{FatGozPro19} in order to obtain a new proof of the Caffarelli contraction theorem \cite{Caffarelli}.

\begin{prop}\label{prop:EstPot}
Let $\ep>0$ be a positive number, $(X,d_X)$ and $(Y,d_Y)$ be Polish metric spaces, $c:X\times Y\to[0,\infty]$ be a bounded cost  function, $\rho_1 \in \P(X)$, $\rho_2 \in \P(Y)$ be probability measures and $u\in\Lexp(\rho_1)$. Then
\begin{itemize}
\item[(i)] if $c$ is $L$-Lipschitz, then $\ucep$ is $L$-Lipschitz;
\item[(ii)] if $c$ is $\omega$-continuous, then $\ucep$ is $\omega$-continuous;
\item[(iii)] if $\vert c\vert \leq M$, then we have $|\ucep +\lambda_u|\leq M$;
\item[(iv)] if $\vert c\vert \leq M$, then $\Fcep:L^{\infty}(\rho_1)\to L^p(\rho_2)$ is a $1$-Lipschitz compact operator.
\item[(v)] if $c$ is $K$-concave with respect to $y$, then $\ucep$ is $K$-concave.

\end{itemize}

\end{prop}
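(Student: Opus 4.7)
The plan is to unify all five items around the identity
$$\Fcep(u)(y) = -\ep \log \mathcal{K}[e^{u/\ep}](y), \qquad \mathcal{K}[f](y) := \int_X e^{-c(x,y)/\ep}\, f(x)\, d\rho_1(x),$$
relying on three mechanisms: pulling constants out of the exponent (for (i), (ii), (iii), and the $1$-Lipschitz half of (iv)), Hölder's inequality (for (v)), and Banach--Alaoglu in $L^\infty(\rho_1)$ (for the compactness half of (iv)).

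For (i) and (ii), I would start from
$$\ucep(y_1) - \ucep(y_2) = \ep \log \frac{\int_X e^{(u(x)-c(x,y_2))/\ep}\, d\rho_1}{\int_X e^{(u(x)-c(x,y_1))/\ep}\, d\rho_1}$$
and use $-c(x,y_2) \leq -c(x,y_1) + L\, d_Y(y_1,y_2)$ (resp.\ $+\omega(d_Y(y_1,y_2))$) to pull the pointwise constant out of the numerator integral; the ratio is then bounded by $e^{L d_Y(y_1,y_2)/\ep}$ (resp.\ $e^{\omega(d_Y(y_1,y_2))/\ep}$), and after applying $\ep \log$ and symmetrising in $y_1,y_2$ the claimed modulus follows. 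Item (iii) is exactly Lemma~\ref{lemma:F1F2}(i), reading $-\lambda_u$ on the right-hand side.

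For (iv), the $1$-Lipschitz half uses the same trick in the $u$-slot: $\|u_1-u_2\|_\infty \leq \delta$ gives $e^{-\delta/\ep} \leq e^{(u_1-u_2)/\ep} \leq e^{\delta/\ep}$ pointwise, which propagates through the integral to $\|\Fcep(u_1)-\Fcep(u_2)\|_{L^\infty(\rho_2)} \leq \delta$, and hence also in every $L^p(\rho_2)$ since $\rho_2$ is a probability. The compactness is the main obstacle; I would factor $\Fcep = (-\ep\log) \circ \mathcal{K} \circ \exp_\ep$ and handle $\mathcal{K}$ with weak-$*$ compactness. Given a sequence $(u_n)$ bounded by $R$ in $L^\infty(\rho_1)$, set $f_n := e^{u_n/\ep}$, still bounded in $L^\infty(\rho_1)$; since $(X,\rho_1)$ is a Polish probability space, $L^1(\rho_1)$ is separable, so Banach--Alaoglu yields a subsequence $f_{n_k} \rightharpoonup^{*} f$ in $L^\infty(\rho_1)$. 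For each fixed $y$ the kernel slice $e^{-c(x,y)/\ep}$ lies in $L^1(\rho_1)$, so $\mathcal{K}[f_{n_k}](y) \to \mathcal{K}[f](y)$ pointwise, and dominated convergence, using the uniform bound $\|\mathcal{K}[f_{n_k}]\|_\infty \leq e^{(M+R)/\ep}$, upgrades this to convergence in $L^p(\rho_2)$ for any $p<\infty$. Since $\mathcal{K}[f_{n_k}] \geq e^{-(M+R)/\ep}$ uniformly (and the weak-$*$ limit $f$ inherits its lower bound $e^{-R/\ep}$), the map $-\ep\log$ is Lipschitz on the relevant range and I conclude $\ucep_{n_k} \to \ucep$ in $L^p(\rho_2)$.

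Finally, (v) is Hölder in disguise: concavity of $c(x,\cdot)$ gives $u(x)-c(x,y_t) \leq (1-t)(u(x)-c(x,y_0)) + t(u(x)-c(x,y_1))$ for $y_t := (1-t)y_0 + t y_1$, and Hölder's inequality on $d\rho_1$ with conjugate exponents $(\tfrac{1}{1-t}, \tfrac{1}{t})$ then yields $A_t \leq A_0^{1-t}A_1^t$ where $A_s := \int_X e^{(u(x)-c(x,y_s))/\ep}\, d\rho_1$; taking $-\ep\log$ is precisely concavity of $\ucep$, and any quantitative $K$-semiconcavity bonus $+\tfrac{K}{2} t(1-t) d_Y(y_0,y_1)^2$ assumed on the hypothesis passes through unchanged. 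The single step I expect to require real care is the weak-$*$ argument in (iv), specifically invoking separability of $L^1(\rho_1)$ (granted by Polishness of $X$) to promote Banach--Alaoglu to sequential compactness; once this is in hand, all remaining items reduce to manipulations of the basic identity above.
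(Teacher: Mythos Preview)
Your treatment of (i), (ii), (iii), (v) and the $1$-Lipschitz half of (iv) is essentially the paper's argument verbatim: the same ratio-of-integrals manipulation, the same appeal to Lemma~\ref{lemma:F1F2}(i), and the same H\"older step for concavity.

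The compactness half of (iv) is where you take a genuinely different route. The paper does \emph{not} use weak-$*$ compactness. Instead it applies Lusin's theorem to $c \in L^{\infty}(\rho_1\otimes\rho_2)$ to find, for every $\sigma>0$, a small set $N_\sigma$ off of which $c$ is uniformly continuous; it then repeats the (ii) calculation with an error term coming from the bad slices, obtaining an ``almost-equicontinuity'' estimate of the form $|\ucep(y)-\ucep(y')|\le\omega_\sigma(d(y,y'))+\beta_\sigma$ for $y,y'$ outside a set of small $\rho_2$-measure. This feeds into a bespoke compactness criterion (Proposition~\ref{prop:compactness} in the appendix), which is an Ascoli--Arzel\`a variant plus a diagonal argument. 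Your argument---Banach--Alaoglu on $e^{u_n/\ep}$, pointwise convergence of $\mathcal{K}[f_{n_k}](y)$ by testing against the bounded kernel slice $e^{-c(\cdot,y)/\ep}\in L^1(\rho_1)$, then dominated convergence to upgrade to $L^p(\rho_2)$, then composing with $-\ep\log$ on an interval bounded away from zero---is correct and noticeably shorter; it sidesteps the appendix entirely and uses only soft functional analysis. What the paper's approach buys in exchange is a quantitative equicontinuity-type estimate on $\Fcep(B)$ that survives even when $c$ is merely $L^\infty$, which is morally stronger information than mere precompactness and is closer in spirit to the uniform-continuity case (ii). One cosmetic remark: your limit is $-\ep\log\mathcal{K}[f]$, not $\Fcep(u)$ for any particular $u$, but that is irrelevant since precompactness only asks for a convergent subsequence.
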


\begin{proof} Of course we have that (ii) implies (i); let us prove directly (ii).
\begin{itemize}
\item[(ii)] Let $u\in\Lexp(\rho_1)$, $y_1,y_2\in Y$. We can assume without loss of generality that $\ucep(y_1)\geq \ucep(y_2)$; in that case
\begin{align*}
\vert \ucep(y_1) - \ucep(y_2)\vert &=   \ep\log\left(\int_X e^{\frac{u(x)-c(x,y_2)}{\ep}}d\rho_1\right) - \ep\log\left(\int_X e^{\frac{u(x)-c(x,y_1)}{\ep}}d\rho_1\right) \\
&=  \ep\log\left(\int_X e^{\frac{u(x)-c(x,y_1)+c(x,y_1)-c(x,y_2)}{\ep}}d\rho_1\right) - \ep\log\left(\int_X e^{\frac{u(x)-c(x,y_1)}{\ep}}d\rho_1\right) \\
&\leq \ep\log\left(e^{\frac{\omega(d(y_1,y_2))}{\ep}}\int_X e^{\frac{u(x)-c(x,y_1)}{\ep}}d\rho_1\right) - \ep\log\left(\int_X e^{\frac{u(x)-c(x,y_1)}{\ep}}d\rho_1\right) \\
&= \omega(y_1,y_2).
\end{align*}
\item[(iii)] This is a direct consequence of Lemma \ref{lemma:F1F2} (i); 

\item[(iv)] We first prove that $\Fcep$ is $1$-Lipschitz. In fact, letting $u, \tilde{u} \in L^{\infty}(\rho_1)$, we can perform a calculation very similar to what has been done in (ii): for every $y \in Y$ we have

\begin{align*} \Fcep (u) (y) &=- \ep\log\left(\int_X e^{\frac{u(x)-c(x,y)}{\ep}}d\rho_1\right) \geq  - \ep\log\left(\int_X e^{\frac{\tilde{u}(x) + \| u - \tilde u \|_{\infty} -c(x,y)}{\ep}}d\rho_1\right)
\\ &= \Fcep(\tilde{u}) (y) - \| u - \tilde{u} \|_{\infty}
\end{align*}
We can conlcude that $\| \Fcep (u) - \Fcep (\tilde{u})\|_p \leq \| \Fcep (u) - \Fcep (\tilde{u})\|_{\infty} \leq  \| u - \tilde{u} \|_{\infty}$.
This proves in particular that $\Fcep: L^{\infty}(\rho_1) \to L^p(\rho_2)$ is continuous. In order to prove that $\Fcep$ is compact it suffices to prove that $\Fcep(B)$ is precompact for every bounded set $B \subset L^{\infty}(\rho_1)$. We will use Proposition \ref{prop:compactness}; since $\Fcep$ is Lipschitz, for sure if $B$ is bounded we have that $\Fcep(B)$ is bounded in $L^p(\rho_2)$, so it remains to prove part (b) of the criterion of Proposition \ref{prop:compactness}.

Let us consider $\gamma = \rho_1\otimes\rho_2$. Since $c \in L^{\infty}(\gamma)$, by Lusin theorem we have that for every $\sigma>0$ there exists $N_{\sigma} \subset X \times Y$,  with $\gamma ( N_{\sigma})< \sigma$, such that $c|_{(N_{\sigma})^c}$ is uniformly continuous, with modulus of continuity $\omega_{\sigma}$. We now try to mimic what we did in (ii), this time keeping also track of the remainder terms we will have.

For each $y\in Y$, we consider the slice of $N_{\sigma}$ above $y$, $N^{\sigma}_y = \left\lbrace x\in X : (x,y)\in N_{\sigma} \right\rbrace$; then we consider the set of \textit{bad} $y \in Y$, where the slice $N^{\sigma}_y$ is too big:
$$N^{\sigma}_b = \left\lbrace y \in Y : \rho_1(N^{\sigma}_y)\geq \sqrt{\sigma} \right\rbrace.$$
In particular, by definition if $y\not \in  N_b^{\sigma}$ we have $\rho_1(N^{\sigma}_y) \leq \sqrt{\sigma}$, but thanks to Fubini and the condition $\gamma(N_{\sigma}) < \sigma$ we have also that $\rho_2(N^{\sigma}_b) \leq \sqrt{\sigma}$ .

Let us now consider $y, y'  \not \in N^{\sigma}_b$, and let us denote $X^*= X \setminus ( N^{\sigma}_y \cup N^{\sigma}_{y'})$. Then we have that for every $x \in X^*$, $|c(x,y)-c(x,y')| \leq \omega_{\sigma}(d(y,y'))$. We can assume without loss of generality that $\ucep(y)\geq \ucep(y')$ and we have
 	
\[
\begin{array}{lcl}
|\ucep(y)-\ucep(y')| &= - \ep \log\left(\int_X e^{(u(x)-c(x,y))/\ep}d\rho_1\right) +\ep \log\left(\int_X e^{(u(x)-c(x,y'))/\ep}d\rho_1\right) \\
&= \ep \log\left(\dfrac{\int_X e^{(u(x)-c(x,y)+c(x,y)-c(x,y'))/\ep}d\rho_1}{\int_X e^{(u(x)-c(x,y))/\ep}d\rho_1}\right) \\
&\leq \ep \log\left(e^{\frac{\omega_{\sigma}(d(y,y'))}{\ep}} +\dfrac{\int_{N^{\sigma}_y \cup N^{\sigma}_{y'}} e^{(u(x)-c(x,y'))/\ep}d\rho_1}{\int_X e^{(u(x)-c(x,y))/\ep}d\rho_1}\right) \\
&\leq \ep \log\left(e^{\frac{\omega_{\sigma}(d(y,y'))}{\ep}}+\rho_1(N^{\sigma}_y \cup N^{\sigma}_{y'}) e^{2(\Vert c\Vert + \Vert u\Vert)/\ep} \right)\\
&\leq \ep \log\left(e^{\frac{\omega_{\sigma}(d(y,y'))}{\ep}}+2\sqrt{\sigma}e^{2(\Vert c\Vert + \Vert u\Vert)/\ep}\right)
\end{array}
\] Now we denote by $A= 2 e^{2(\Vert c\Vert + \Vert u\Vert)/\ep}$ and thanks to the fact that if $a, b \geq0$ then $e^a+b \leq e^{a+b}$, we have
$$|\ucep(y)-\ucep(y')| \leq\omega_{\sigma}(d(y,y'))  + \ep \sqrt{\sigma}A \qquad \forall y, y' \not \in N_b^{\sigma}.$$

Then (having in mind also (iii) and that $A$ depends only on $\|u \|_{\infty}$), we have that also (b) of Proposition \ref{prop:compactness} is satisfied for $\Fcep(B)$, for every bounded set $B \subset L^{\infty}(\rho_1)$, granting then the compactness of $\Fcep$.

\item[(v)] In this case we are assuming that $Y$ is a geodesic space and that there exists $K \in \R$ such that for each constant speed geodetic $(y_t)_{t \in [0,1]}$ we have
$$c(x,y_t) \geq (1-t)c(x,y_0)+ t c(x,y_1) + 2 t(1-t) K d^2(y_0,y_1)  \quad \forall x \in X.$$
Then, setting $f_t(x)=e^{(u(x)-c(x,y_t))/\ep}$, the $K$-concavity inequality for $c$ implies
\begin{align*}
f_t(x) &=e^{(u(x)-c(x,y_t))/\ep}   \\
& \leq e^{(u(x)-(1-t)c(x,y_0)-tc(x,y_1)-2t(1-t)Kd^2(y_0,y_1))/\ep}  \\
 &= e^{-2 t(1-t) K d^2(y_0,y_1)/\ep} \cdot e^{((1-t)(u(x)-c(x,y_0))+t(u(x)-c(x,y_1)))/\ep} \\
 &= e^{-2 t(1-t) K d^2(y_0,y_1)/\ep} \cdot  f_0(x)^{1-t} \cdot  f_1(x)^t 
\end{align*}
Using this along with H\"{o}lder inequality we get
\begin{align*}
\ucep(y_t) & = - \ep \log\left( \int_X e^{(u(x)-c(x,y_t))/\ep}\, d\rho_1 \right)  = -\ep \log \left( \int_X f_t(x) \, d \rho_1 \right) \\
& \geq  - \ep \log\left(\int_X e^{-2 t(1-t) K d^2(y_0,y_1)/\ep} \cdot  f_0(x)^{1-t} \cdot  f_1(x)^t \,d\rho_1\right) \\
&= 2t (1-t)Kd^2(y_0,y_1)- \ep \log\left(\int_X f_0(x)^{1-t} \cdot  f_1(x)^t \,d\rho_1\right) \\
&\geq 2t (1-t)Kd^2(y_0,y_1)- \ep \log\left( \Bigl(\int_X f_0 \, d \rho_1 \Bigr)^{1-t} \cdot \Bigl(\int_X f_1 \, d \rho_1 \Bigr)^{t}\right) \\
& =2t (1-t)Kd^2(y_0,y_1) + (1-t)\ucep(y_0)+ t\ucep(y_1).
\end{align*}
\end{itemize}

\end{proof}

\begin{oss}[Entropic $c$-transform for $\Sep(\rho_1,\rho_2;\refmx,\refmy)$]\label{rmk:change} It is possible to define the entropic $c$-transform also for the Schr\"odinger problem $\Sep(\rho_1,\rho_2;\refmx,\refmy)$ with reference measures $\refmx \in \P(X)$ and $\refmy\in \P(Y)$. In this case, 
\begin{equation}\label{eq:F1alt}
\Fcep_{*} (u)  (y)= \ep\log(\rho_2(y)) -\ep \log \left( \int_X e^{ \frac { u (x) - c(x,y) }{\ep}}d\refmx(x)\right), \quad \text{and}
\end{equation}
\begin{equation}\label{eq:F2alt}
\Fcep_{*}(v) (x) = \ep\log(\rho_1(x))-\ep \log \left( \int_X e^{ \frac { u (x) - c(x,y) }{\ep}}d\refmy(y)\right).
\end{equation}

It is easy to see that 
$$ \begin{cases} \Fcep_{*}(v) &= \ep\log\rho_1 + \Fcep( v - \ep \log \rho_2) \\ \Fcep_{*}(u) &= \ep\log\rho_2 + \Fcep( u - \ep \log \rho_1) \end{cases} $$
 so that in fact the $(c,\ep)-$transforms with reference measures are in fact the $(c,\ep)$-trasforms conjugated by the addition of a function. In particular we can get exactely the same estimates we did in Lemma \ref{lemma:F1F2}, up to translate in the appropriate manner. For example we would have if $u\in \Lexp(\refmx)$, we would have then $\ucep_{*}(y) -\ep\log(\rho_2(y)) \in L^{\infty}(\refmy)$.
 
 \end{oss}

\subsection{Dual problem}

Let $u\in \Lexp(\rho_1), v\in \Lexp(\rho_2)$ and consider the Entropy-Kantorovich functional,
\begin{equation}\label{eqn:dualdef}
\Dep(u,v) =  \int_X  u(x)d\rho_1(x) + \int_Y v(y)d\rho_2(y) - \ep\int_{X\times Y} e^{\frac{u(x)+v(y)-c(x,y)}{\ep}}d(\rho_1\otimes\rho_2)
\end{equation}

What are the minimal assumption on $u,v$ in order to make sense for $\Dep(u,v)$? First of all if $u^+ \in L^1(\rho_1)$ and $v^+ \in L^1(\rho_2)$ then $\Dep(u,v)< \infty$ and in particular in order to have $\Dep(u,v)>-\infty$ we need $u\in \Lexp(\rho_1), v\in \Lexp(\rho_2)$ which is then a natural assumption (since we want to compute the supremum of $\Dep$). 

\begin{lemma}\label{lemma:dual} Let us consider $\Dep: \Lexp(\rho_1) \times \Lexp(\rho_2)\to\mathbb{R}$ defined as in \eqref{eqn:dualdef}, then
\begin{equation}\label{est:optcond}
D_{\ep}(u,\ucep) \geq  D_{\ep}(u,v) \qquad \forall v \in \Lexp(\rho_2),
\end{equation}
\begin{equation}\label{est:optcond2}
D_{\ep}(u,\ucep) =  D_{\ep}(u,v) \text{ if and only if } v = \ucep.
\end{equation}
In particular we can say that $\ucep \in {\rm argmax} \{ D_{\ep} ( u,v) \; : \; v \in \Lexp(\rho_2) \}$.
\end{lemma}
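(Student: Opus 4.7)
The plan is to prove both statements by a single direct computation of the difference $D_\ep(u,\ucep) - D_\ep(u,v)$, exploiting the defining identity of the $(c,\ep)$-transform to simplify the exponential terms. Note first that $\ucep \in \Lexp(\rho_2)$ by Lemma~\ref{lemma:F1F2}(ii), so $D_\ep(u,\ucep)$ is well-defined.

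The first key observation is that, by the very definition of $\ucep$, for $\rho_2$-almost every $y$ one has
$$\int_X e^{(u(x) + \ucep(y) - c(x,y))/\ep}\, d\rho_1(x) = e^{\ucep(y)/\ep} \cdot e^{-\ucep(y)/\ep} = 1.$$
Integrating in $y$ gives $\int_{X\times Y} e^{(u+\ucep-c)/\ep}\, d(\rho_1\otimes\rho_2) = 1$. For a general $v \in \Lexp(\rho_2)$ the same computation yields
$$\int_X e^{(u(x)+v(y)-c(x,y))/\ep}\, d\rho_1(x) = e^{(v(y)-\ucep(y))/\ep} \quad \text{for } \rho_2\text{-a.e. } y.$$

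Substituting these two identities into the difference $D_\ep(u,\ucep) - D_\ep(u,v)$ (where the $u$ and $c$ terms cancel out) collapses it to a single integral over $Y$:
\begin{equation*}
D_\ep(u,\ucep) - D_\ep(u,v) = \int_Y (\ucep - v)\, d\rho_2 - \ep + \ep \int_Y e^{(v-\ucep)/\ep}\, d\rho_2 = \ep \int_Y \bigl( e^{t(y)} - t(y) - 1 \bigr)\, d\rho_2(y),
\end{equation*}
where $t(y) := (v(y) - \ucep(y))/\ep$. Since the elementary convexity inequality $e^t - t - 1 \geq 0$ holds for all $t \in [-\infty,+\infty)$ with equality only at $t=0$, the integrand is nonnegative, proving \eqref{est:optcond}. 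For the equality case \eqref{est:optcond2}, the above integral vanishes iff $t(y) = 0$ for $\rho_2$-a.e. $y$, i.e.\ $v = \ucep$ in $\Lexp(\rho_2)$.

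I do not expect any serious obstacle: the only mild point to be careful about is handling the values $v(y) = -\infty$ or $\ucep(y) = -\infty$ (where $t$ can be $-\infty$), but there $e^t - t - 1 = +\infty \geq 0$ so the inequality is trivially preserved, and the measurability/integrability issues are controlled by the fact that both $\ucep$ and $v$ lie in $\Lexp(\rho_2)$. The whole argument hinges on the cancellation in the first key identity, which is why the $(c,\ep)$-transform is exactly the right notion of partial maximizer.
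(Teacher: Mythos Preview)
Your proof is correct and essentially the same as the paper's: both use Fubini together with the defining identity of $\ucep$ to reduce the exponential term to $e^{(v-\ucep)/\ep}$, and then appeal to a pointwise scalar inequality (you phrase it as $e^t - t - 1 \geq 0$, the paper as the strict concavity of $t \mapsto t - \ep e^{(t-a)/\ep}$ with maximum at $t=a$, which are equivalent). Your explicit handling of the $v(y)=-\infty$ case is a nice touch that the paper leaves implicit.
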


\begin{proof}
By Fubini's theorem and equation \eqref{eq:F1}, we have
\begin{align*}
\Dep(u,v) &= \int_X u(x)d\rho_1(x) + \int_Y v(y)d\rho_2(y) - \ep\int_{X\times Y} e^{\frac{u(x)+v(y)-c(x,y)}{\ep}}d(\rho_1\otimes\rho_2),\\
&= \int_X u(x)d\rho_1(x) + \int_Y v(y)d\rho_2(y) - \ep\int_Y e^{\frac{v(y)}{\ep}}\left(\int_X e^{\frac{u(x)-c(x,y)}{\ep}}d\rho_1\right)d\rho_2,\\
&= \int_X u(x)d\rho_1(x) + \int_Y v(y) - \ep e^{\frac{v(y)-\ucep(y)}{\epsilon}}d\rho_2(y).
\end{align*}  
Therefore, for any $v\in \Lexp(\rho_1)$, $ \Dep(u,v) \leq \Dep(u,\ucep)$, since the function $g(t) = t-\ep e^{(t-a)/\ep}$ is strictly concave and attains its maximum in $t=a$. In particular, $D_{\ep}(u,\ucep)= D_{\ep}(u,v)$ if and only if $v = \ucep$.
\end{proof}

\begin{lemma}\label{lemma:betterpotentials} Let us consider $u \in \Lexp(\rho_1)$ and $v \in \Lexp(\rho_2)$. Then there exist $u^* \in \Lexp(\rho_1)$ and $v^* \in \Lexp(\rho_2)$ such that
\begin{itemize}
\item $D_{\ep}(u,v) \leq D_{\ep}(u^*,v^*)$;
\item $ \| v^* \|_{\infty} \leq 3\| c\|_{\infty}/2 $;
\item $ \| u^*\|_{\infty} \leq 3\| c\|_{\infty}/2 $.
\end{itemize}
Moreover we can choose $a \in \mathbb{R}$ such that $u^*= (v+a)^{(c,\ep)}$ and $v^*=(u^*)^{(c,\ep)}$.
\end{lemma}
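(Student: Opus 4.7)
The plan is to apply the $(c,\ep)$-transform twice and use the trivial shift invariance $D_\ep(u-a, v+a) = D_\ep(u,v)$ to optimize the resulting $L^{\infty}$ bounds. For a real parameter $a$ to be chosen later, I set
\[
u^* := (v+a)^{(c,\ep)} \qquad \text{and} \qquad v^* := (u^*)^{(c,\ep)}.
\]
Applying Lemma \ref{lemma:dual} first with $u^*$ fixed (so that $v^* = (u^*)^{(c,\ep)}$ is the optimal second argument) and then its symmetric analogue with $v+a$ fixed (so that $u^* = (v+a)^{(c,\ep)}$ is the optimal first argument --- the analogue follows from the $X \leftrightarrow Y$ symmetry of $D_\ep$), together with the trivial invariance, yields
\[
D_\ep(u^*, v^*) \;\geq\; D_\ep(u^*, v+a) \;\geq\; D_\ep(u-a, v+a) \;=\; D_\ep(u,v).
\]
This establishes the first bullet for \emph{any} choice of $a$, so the freedom in choosing $a$ can be entirely devoted to optimizing the sup-norm bounds.

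The substantive step is then to pick $a$ so that both $\|u^*\|_\infty$ and $\|v^*\|_\infty$ are $\leq \tfrac{3}{2}\|c\|_\infty$. By Lemma \ref{lemma:F1F2}(i), $u^*$ is contained in the interval $-(a+\lambda_v) + [-\|c\|_\infty, \|c\|_\infty]$ and $v^*$ is contained in $-\lambda_{u^*} + [-\|c\|_\infty, \|c\|_\infty]$. Since $u^* = v^{(c,\ep)} - a$, a direct computation gives $\lambda_{u^*} = \lambda_{v^{(c,\ep)}} - a$. Introducing the quantity $\mu := \lambda_v + \lambda_{v^{(c,\ep)}}$, Lemma \ref{lemma:F1F2}(ii) yields $|\mu|\leq \|c\|_\infty$, and the two centers above rewrite as $-(a+\lambda_v)$ and $(a+\lambda_v) - \mu$ respectively.

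The key observation is that the single scalar $a$ controls both centers through the common combination $a+\lambda_v$, so the choice $a := \mu/2 - \lambda_v$ aligns both centers at $-\mu/2$. Both $u^*$ and $v^*$ then lie in $[-\mu/2 - \|c\|_\infty,\, -\mu/2 + \|c\|_\infty]$, which gives
\[
\|u^*\|_\infty,\; \|v^*\|_\infty \;\leq\; \tfrac{|\mu|}{2} + \|c\|_\infty \;\leq\; \tfrac{3}{2}\|c\|_\infty.
\]
There is no serious obstacle here beyond recognizing that the trivial shift invariance provides exactly the right one-parameter family to symmetrically equalize the two bounds; the rest is bookkeeping with Lemmas \ref{lemma:dual} and \ref{lemma:F1F2}.
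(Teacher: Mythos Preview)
Your proof is correct and follows essentially the same approach as the paper: two applications of the $(c,\ep)$-transform (using Lemma~\ref{lemma:dual} and its symmetric counterpart), the bounds from Lemma~\ref{lemma:F1F2}, and the shift invariance of $D_\ep$, with the same optimal choice $a=(\lambda_{v^{(c,\ep)}}-\lambda_v)/2$. The only difference is cosmetic: you build the shift $a$ into the definitions of $u^*,v^*$ from the outset, whereas the paper first sets $\tilde u=v^{(c,\ep)}$, $\tilde v=\tilde u^{(c,\ep)}$ and shifts afterwards.
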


\begin{proof} Let us apply Proposition \ref{prop:EstPot} (iii) to $v$ and $\tilde{u}=\vcep$:
$$ - \| c\|_{\infty} \leq \vcep + \lambda_v \leq \|c \|_{\infty} $$
$$  - \| c\|_{\infty} \leq (\vcep)^{(c,\ep)}+ \lambda_{\vcep} \leq \|c \|_{\infty} $$
Let us define $\tilde{u}=\vcep$ and $\tilde{v}=(\vcep)^{(c,\ep)}$. Then by Lemma \ref{lemma:dual} we have of course that $D_{\ep}(u,v) \leq D_{\ep}(\tilde{u},\tilde{v})$; now we know that $\Dep(\tilde{u}-a, \tilde{v}+a)=\Dep(\tilde{u}, \tilde{v})$ for any $a \in \mathbb{R}$ and moreover 
$$ \| \tilde{u}-a \|_{\infty} \leq \|c\|_{\infty} + | a+\lambda_v| \qquad  \| \tilde{v}+a \|_{\infty} \leq \|c\|_{\infty} + |\lambda_{\vcep} -a|.$$
We can now choose $a^*= (\lambda_{\vcep}- \lambda_v) /2$ and, recalling Lemma \ref{lemma:F1F2} (ii) we can conclude that $u^*=\tilde{u}-a^*$ and $v^*=\tilde{v}+a^*$ satisfy the required bounds.
\end{proof}

\begin{teo}\label{thm:kanto2Nmax} Let $(X,d_X)$, $(Y,d_Y)$ be Polish spaces, $c:X\times Y\to \R$ be a Borel bounded cost, $\rho_1 \in \P(X)$, $\rho_2 \in \P(Y)$ be probability measures and $\ep>0$ be a positive number. Consider the problem
\begin{equation}\label{eq:dualitySep}
\sup \left\{ D_{\ep}(u,v) \; : \; u \in \Lexp(\rho_1) , v \in \Lexp(\rho_2) \right\}.
\end{equation}
Then the supremum in \eqref{eq:dualitySep} is attained for a unique couple $(u_0,v_0)$ (up to the trivial tranformation $(u,v) \mapsto (u+a,v-a)$). In particular we have 
$$u_0 \in L^{\infty}(\rho_1) \quad \text{ and } \quad v_0 \in L^{\infty}(\rho_2);$$
moreover we can choose the maximizers such that $\|u_0\|_{\infty} ,\|v_0\|_{\infty} \leq \frac 32 \|c\|_{\infty}$.
\end{teo}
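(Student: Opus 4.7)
My plan is to apply the direct method of the calculus of variations, leveraging the tools built up in this section. First I would take a maximizing sequence $(u_n,v_n)\in \Lexp(\rho_1)\times \Lexp(\rho_2)$ for \eqref{eq:dualitySep} and apply Lemma~\ref{lemma:betterpotentials} to each pair, producing a new maximizing sequence $(u_n^*,v_n^*)$ with $v_n^* = \Fcep(u_n^*)$ and uniform bounds $\|u_n^*\|_\infty,\|v_n^*\|_\infty\leq \tfrac{3}{2}\|c\|_\infty$. Then Proposition~\ref{prop:EstPot}(iv) applied to the bounded set $\{u_n^*\}\subset L^\infty(\rho_1)$ gives precompactness of $\{v_n^*\}=\{\Fcep(u_n^*)\}$ in $L^p(\rho_2)$ for any $1\leq p<\infty$, while Banach--Alaoglu furnishes a further subsequence with $u_n^*\rightharpoonup^* u_0$ in $L^\infty(\rho_1)$ and $v_n^*\to v_0$ strongly in $L^p(\rho_2)$. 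Both limits inherit the bound $\tfrac 32\|c\|_\infty$ by weak-$*$ lower semicontinuity of the $L^\infty$ norm.

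The key observation for passing to the limit is that the identity $v_n^*=\Fcep(u_n^*)$ forces $\int e^{(u_n^*+v_n^*-c)/\ep}\,d(\rho_1\otimes\rho_2)=1$, so along the sequence
\[
\Dep(u_n^*,v_n^*) = \int_X u_n^*\,d\rho_1+\int_Y v_n^*\,d\rho_2-\ep,
\]
and the two linear integrals pass to the limit by weak-$*$ and $L^1$ convergence respectively. To conclude that $(u_0,v_0)$ attains the supremum, I would establish the one-sided estimate $\int e^{(u_0+v_0-c)/\ep}\,d(\rho_1\otimes\rho_2)\leq 1$ using the tangent inequality $e^A\geq e^B+e^B(A-B)$ with $A=(u_n^*+v_n^*-c)/\ep$ and $B=(u_0+v_0-c)/\ep$: integrating and applying Fubini, the linear correction terms carry bounded (hence $L^1$) weights of the form $\int_Y e^B d\rho_2$ and $\int_X e^B d\rho_1$, and therefore vanish by the weak-$*$ and $L^1$ convergences. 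Combining with the equality above gives $\Dep(u_0,v_0)\geq \sup \Dep$, so $(u_0,v_0)$ is the desired maximizer with the announced bounds.

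For uniqueness I would exploit the strict convexity of $t\mapsto e^{t/\ep}$. If $(u_0,v_0)$ and $(u_1,v_1)$ are both maximizers, evaluating $\Dep$ at the midpoint $\tfrac{1}{2}(u_0+u_1,v_0+v_1)$ and applying strict convexity of the exponential in the penalty term shows that the midpoint strictly improves $\Dep$ unless $u_0(x)+v_0(y)=u_1(x)+v_1(y)$ for $(\rho_1\otimes\rho_2)$-a.e.\ $(x,y)$. A standard Fubini argument then forces $u_0-u_1\equiv a$ and $v_0-v_1\equiv -a$ for some constant $a\in\R$, which is exactly the trivial transformation quotiented out in the statement.

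The main obstacle I anticipate is the upper semicontinuity step in the exponential penalization, since $u_n^*$ is controlled only in the weak-$*$ topology and the functional $u\mapsto \int e^{u/\ep}$ is not continuous under this convergence. The structural identity $v_n^*=\Fcep(u_n^*)$ is what makes this tractable: it pins the exponential integral to $1$ along the entire sequence, so the limit passage reduces to controlling linear pairings against bounded weights, which is precisely what weak-$*$ convergence allows.
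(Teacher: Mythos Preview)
Your argument is correct, but it is more elaborate than necessary. The paper's proof follows the same direct-method skeleton (maximizing sequence, Lemma~\ref{lemma:betterpotentials} for uniform $L^\infty$ bounds, Banach--Alaoglu for compactness, strict concavity for uniqueness) but handles the passage to the limit more simply: after extracting weak-$*$ convergent subsequences for \emph{both} $u_n$ and $v_n$, it invokes only the weak lower semicontinuity of the convex integral functional $w\mapsto \int e^{w/\ep}$ to get
\[
\liminf_n \int e^{(u_n+v_n-c)/\ep}\,d(\rho_1\otimes\rho_2)\ \geq\ \int e^{(\bar u+\bar v-c)/\ep}\,d(\rho_1\otimes\rho_2),
\]
which immediately yields $\sup \Dep \leq \Dep(\bar u,\bar v)$. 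Your tangent-inequality computation is in fact precisely the standard proof of this weak lower semicontinuity, so the ``obstacle'' you anticipated dissolves without any appeal to the structural identity $\int e^{(u_n^*+v_n^*-c)/\ep}=1$ or to the strong $L^p$ compactness of $\{v_n^*\}=\{\Fcep(u_n^*)\}$ from Proposition~\ref{prop:EstPot}(iv). Those extra ingredients are valid and in the spirit of the paper's toolkit, but they are not needed here; the paper gets by with weak convergence and convexity alone. Your uniqueness argument via the midpoint and strict convexity of the exponential is the same as the paper's, only spelled out in more detail.
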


\begin{proof}
Now, we are going to show that the supremum is attainded in the right-hand side of \eqref{eq:dualitySep}. Let $(u_n)_{n\in\N} \subset \Lexp(\rho_1) $ and $(v_n)_{n\in \N} \subset \Lexp(\rho_2)$ be  maximizing sequences. Due to Lemma  \ref{lemma:betterpotentials}, we can suppose that $u_n\in L^{\infty}(\rho_1)$, $v_n\in L^{\infty}(\rho_2)$ and $\Vert u_n\Vert_{\infty},\Vert v_n\Vert_{\infty} \leq \frac 32 \| c \|_{\infty}$. Then by Banach-Alaoglu theorem there exists subsequences $(u_{n_k})_{n_k\in\N}$ and $(v_{n_k})_{n_k\in\N}$ such that $u_{n_k}\rightharpoonup \overline{u}$ and $v_{n_k}\rightharpoonup \overline{v}$. In particular, $\tilde{u}_{n_k}+\tilde{v}_{n_k}-c\rightharpoonup \overline{u}+\overline{v}-c$.  

First, notice that since $t \mapsto e^t$ is a convex function, we have
\begin{align*}
\liminf_{n\to\infty} \int_{X\times Y}e^{\frac{u_n+v_n-c}{\ep}}d(\rho_1\otimes\rho_2) &= \liminf_{n\to\infty} \int_{X\times Y}e^{\frac{u_n+v_n-c}{\ep}}d(\rho_1\otimes\rho_2) \\
&\geq  \int_{X\times Y}e^{\frac{\overline{u}+\overline{v}-c}{\ep}}d(\rho_1\otimes\rho_2).
\end{align*}
Moreover,
\begin{align*}
\sup_{u,v} \Dep(u,v) &= \lim_{n\to\infty}\left\lbrace\int_X u_nd\rho_1 + \int_Y v_nd\rho_2 - \ep\int_{X\times Y}e^{\frac{u_n+v_n-c}{\ep}}d(\rho_1\otimes\rho_2) \right\rbrace \\
&\leq \lim_{n\to\infty}\left\lbrace\int_X u_n d\rho_1 + \int_Y v_n d\rho_2  \right\rbrace - \ep \liminf_{ n \to \infty}\left\lbrace \int_{X\times Y}e^{\frac{u_n+v_n-c}{\ep}}d(\rho_1\otimes \rho_2)\right\rbrace \\
&\leq \int_X \overline{u}d\rho_1 + \int_Y \overline{v}d\rho_2 - \ep\int_{X\times Y}e^{\frac{\overline{u}+\overline{v}-c}{\ep}}d(\rho_1\otimes\rho_2) = D(\overline{u},\overline{v}).
\end{align*}

So, $(\overline{u},\overline{v})$ is a maximizer for $\Dep$. By construction, we have also that $\overline{u} \in L^{\infty}(\rho_1)$ and $\quad \overline{v} \in L^{\infty}(\rho_2)$. Finally, the strictly concavity of $D_{\ep}$ and Lemma \ref{lemma:dual} implies that the maximizer is unique and, in particular $\overline{v} = \overline{u}^{(c,\ep)}$. 

\end{proof}

\begin{cor}
Let $(X,d_X,\refmx)$, $(Y,d_Y,\refmy)$ be Polish metric measure spaces, $c:X\times Y\to \mathbb{R}$ be a Borel bounded cost function, $\rho_1\in\P(X)$ and $\rho_2\in\P(Y)$ be probability measures such that $\KL(\rho_1|\refmx) + \KL(\rho_1|\refmy) < \infty$. Consider the dual functional $\tilde \Dep:\Lexp(\refmx)\times\Lexp(\refmy)\to \R$,
\[
\tilde{\Dep}(u,v) = \int_X u(x)\rho_1(x)d\refmx(x) + \int_Y v(y)\rho_2(y)d\refmy(y) 
- \ep\int_{X\times Y} e^{\frac{u(x)+v(y)-c(x,y)}{\ep}}d(\refmx(x)\otimes\refmy(y)).
\]
Then the supremum 
\[
\sup \left\{ D_{\ep}(u,v) \; : \; u \in \Lexp(\refmx) , v \in \Lexp(\refmy) \right\}.
\]
is attained for a unique couple $(u_0,v_0)$ and in particular we have 
$$u_0 - \ep\log\rho_1 \in L^{\infty}(\refmx) \quad \text{ and } \quad v_0 -\ep\log\rho_2 \in L^{\infty}(\refmy).$$
\end{cor}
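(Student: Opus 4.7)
The plan is to reduce this corollary to Theorem \ref{thm:kanto2Nmax} by the change of variables already hinted at in Remark \ref{rmk:change}. The assumption $\KL(\rho_1|\refmx) < \infty$ (and analogously for $\rho_2$) forces $\rho_1 \ll \refmx$ and $\rho_2 \ll \refmy$, so the notation $\rho_1(x) = \frac{d\rho_1}{d\refmx}(x)$ in the definition of $\tilde{\Dep}$ is justified. The key substitution is
\[
u' := u - \ep \log \rho_1, \qquad v' := v - \ep \log \rho_2,
\]
(with the convention $\log 0 = -\infty$, so that $u'$ is $-\infty$ exactly where $\rho_1$ vanishes, which is allowed in the space $\Lexp$).

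First I would check that $u \mapsto u'$ defines a bijection between $\Lexp(\refmx)$ and $\Lexp(\rho_1)$. This is immediate since
\[
\int_X e^{u'/\ep} \, d\rho_1 = \int_X e^{u/\ep} \rho_1^{-1} \, d\rho_1 = \int_X e^{u/\ep} \, d\refmx,
\]
where the second equality holds because on $\{\rho_1 = 0\}$ both sides vanish, and an analogous identity holds for $v$. Next I would compute directly that under this substitution
\[
\tilde{\Dep}(u,v) = \Dep(u',v') + \ep \KL(\rho_1|\refmx) + \ep \KL(\rho_2|\refmy),
\]
using on the linear terms that $\int u \, \rho_1 \, d\refmx = \int u \, d\rho_1 = \int u' \, d\rho_1 + \ep \int \log \rho_1 \, d\rho_1$, and on the exponential term that $e^{(u+v-c)/\ep} \, d(\refmx \otimes \refmy) = e^{(u'+v'-c)/\ep} \, d(\rho_1 \otimes \rho_2)$.

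Since by hypothesis both $\KL$ terms are finite, maximizing $\tilde{\Dep}$ over $\Lexp(\refmx) \times \Lexp(\refmy)$ is equivalent (up to the additive constant above) to maximizing $\Dep$ over $\Lexp(\rho_1) \times \Lexp(\rho_2)$. Theorem \ref{thm:kanto2Nmax} then provides a unique maximizer $(u_0', v_0')$ of $\Dep$ up to the trivial translation, with $\|u_0'\|_\infty, \|v_0'\|_\infty \leq \tfrac{3}{2}\|c\|_\infty$. Setting $u_0 := u_0' + \ep \log \rho_1$ and $v_0 := v_0' + \ep \log \rho_2$ yields the required unique maximizer of $\tilde{\Dep}$, and the bounds on $u_0', v_0'$ translate directly into $u_0 - \ep \log \rho_1 \in L^\infty(\refmx)$ and $v_0 - \ep \log \rho_2 \in L^\infty(\refmy)$.

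I do not anticipate a real obstacle: the argument is essentially bookkeeping on the substitution. The only delicate point is handling the sets where $\rho_1$ or $\rho_2$ vanish, which is why the dual space was defined as $\Lexp$ (allowing the value $-\infty$) rather than something like $L^\infty$; this is precisely tailored so that the change of variables above is well defined and preserves all relevant integrals.
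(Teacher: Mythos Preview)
Your proposal is correct and follows exactly the same route as the paper: both reduce the corollary to Theorem~\ref{thm:kanto2Nmax} via the change of variables $T:(u,v)\mapsto(u-\ep\log\rho_1,\,v-\ep\log\rho_2)$ and the identity $\tilde{\Dep}(u,v)=\Dep(T(u,v))+\ep\KL(\rho_1|\refmx)+\ep\KL(\rho_2|\refmy)$. Your write-up is in fact more detailed than the paper's one-line proof; the only caveat is that on $\{\rho_1=0\}$ the displayed equality $\int e^{u/\ep}\rho_1^{-1}\,d\rho_1=\int e^{u/\ep}\,d\refmx$ need not hold unless one first reduces (without loss, since it only increases $\tilde{\Dep}$) to potentials with $u=-\infty$ on $\{\rho_1=0\}$, which is precisely the convention you adopt when going back from $u_0'$ to $u_0$.
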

\begin{proof}
The proof follows by the change of variable $T:(u,v)\mapsto (u -\ep \log \rho_1, v-\ep \log \rho_2)$ which is such that $\tilde \Dep (u,v)= \Dep( T(u,v))+ \ep \KL(\rho_1|\refmx) + \ep \KL(\rho_1|\refmy)$, and  Theorem \ref{thm:kanto2Nmax}. Another way is to apply same arguments of theorem \eqref{thm:kanto2Nmax} by using the Entropic $c$-transform $u^{(c,\ep)}_{\refmx}$ described in Remark \ref{rmk:change}. 
\end{proof}

In the following proposition an important concept will be that of bivariate transformation. Given $\ka$ a Gibbs measure, $a(x)$ and $b(y)$ two measurable function with respect to $\kappa$, such that $a,b \geq0$, we define the bivariate transformation of $\ka$ through $a$ and $b$ as 
\begin{equation}\label{eqn:kappa}
\kappa(a,b):= a(x)  b(y) \cdot \ka
\end{equation}
this is still a (possibily infinite) measure.

\begin{lemma}\label{lem:easydual} Let $\ep>0$ be a positive number, $(X,d_X)$ and $(Y,d_Y)$ be Polish metric spaces, $c:X\times Y\to \mathbb{R}$ be a cost function (not necessarily bounded), $\rho_1 \in \P(X)$, $\rho_2 \in \P(Y)$ be probability measures and let $\kappa$ as in \eqref{eqn:Gibbs}. Then for every $\gamma \in \Pi(\rho_1, \rho_2)$, $u \in \Lexp(\rho_1) $ and $ v \in \Lexp(\rho_2)$ then we have 
\begin{equation}\label{eqn:uvgamma}
\ep \KL(\gamma|\ka) \geq D_{\ep} ( u,v) + \ep, \quad \text{ with equality iff }\gamma =\kappa(e^{u/\ep}, e^{v/\ep}),
\end{equation}
where $\kappa$ is defined as in \eqref{eqn:kappa}.
\end{lemma}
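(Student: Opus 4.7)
\smallskip
\noindent
\textbf{Proof plan.} The strategy is to view the inequality as a weak duality statement, obtained by combining Gibbs' inequality ($\KL \geq 0$) with the elementary scalar estimate $\log t \leq t-1$. First I dispose of two trivial sub-cases. If $\gamma \not\ll \rho_1 \otimes \rho_2$, then since $\ka = e^{-c/\ep}(\rho_1 \otimes \rho_2)$ has strictly positive density, we also have $\gamma \not\ll \ka$, hence $\KL(\gamma|\ka)=+\infty$ and the inequality is immediate. Similarly, if $Z := \int_{X\times Y} e^{(u+v-c)/\ep}\,d(\rho_1 \otimes \rho_2) = +\infty$, then using $u^+ \in L^1(\rho_1)$ and $v^+\in L^1(\rho_2)$ (which follow from $u \in \Lexp(\rho_1)$, $v\in \Lexp(\rho_2)$) we get $D_{\ep}(u,v) = -\infty$ and the inequality is again trivial. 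So I may assume $\gamma \ll \rho_1 \otimes \rho_2$, with density $f := d\gamma/d(\rho_1 \otimes \rho_2)$, and $0 < Z < +\infty$.

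\smallskip
\noindent
Under these assumptions the measure $\pi := Z^{-1}\, e^{(u+v-c)/\ep}\,(\rho_1 \otimes \rho_2) = Z^{-1}\,\kappa(e^{u/\ep},e^{v/\ep})$ is a probability measure on $X \times Y$. By Gibbs' inequality $\KL(\gamma|\pi) \geq 0$, with equality iff $\gamma = \pi$. Computing the density $d\gamma/d\pi = f\,Z\,e^{-(u+v-c)/\ep}$ and using the marginal identities $(e_1)_\sharp \gamma = \rho_1$, $(e_2)_\sharp \gamma = \rho_2$ to replace $\int u(x)\,d\gamma$ by $\int u\,d\rho_1$ and $\int v(y)\,d\gamma$ by $\int v\,d\rho_2$, this becomes
$$0 \leq \ep\,\KL(\gamma|\rho_1\otimes\rho_2) + \ep \log Z + \int c\,d\gamma - \int u\,d\rho_1 - \int v\,d\rho_2.$$
Since $d\ka/d(\rho_1\otimes\rho_2) = e^{-c/\ep}$, the first and third terms combine into $\ep\,\KL(\gamma|\ka)$, giving
$$\int u\,d\rho_1 + \int v\,d\rho_2 - \ep \log Z \leq \ep\,\KL(\gamma|\ka).$$

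\smallskip
\noindent
To convert $-\ep \log Z$ into $\ep - \ep Z$ (which is exactly what appears in $D_{\ep}(u,v)+\ep$), apply $\log Z \leq Z-1$, valid for all $Z>0$ with equality iff $Z=1$. Adding the resulting inequality $-\ep \log Z \geq \ep - \ep Z$ on the left-hand side yields
$$D_{\ep}(u,v) + \ep \;=\; \int u\,d\rho_1 + \int v\,d\rho_2 - \ep Z + \ep \;\leq\; \ep\,\KL(\gamma|\ka),$$
which is the desired estimate. For the equality case, both Gibbs' inequality and $\log Z \leq Z-1$ must be saturated: the latter forces $Z=1$, and the former then forces $\gamma = \pi = Z^{-1}\,\kappa(e^{u/\ep},e^{v/\ep}) = \kappa(e^{u/\ep},e^{v/\ep})$, as claimed.

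\smallskip
\noindent
The main obstacle is the integrability bookkeeping, since $c$ may be unbounded and $u,v$ may take the value $-\infty$ on sets of positive measure. The two preliminary reductions isolate precisely the situations where the manipulations could fail: once $\gamma \ll \rho_1\otimes \rho_2$ and $Z < +\infty$, the measure $\pi$ is well-defined and every integral in the Gibbs calculation lies in $[-\infty,+\infty]$ without ambiguity (using $u^+\in L^1(\rho_1)$, $v^+\in L^1(\rho_2)$ and $f\log f \geq -1/e$), so the algebraic rearrangement is legitimate in the extended reals.
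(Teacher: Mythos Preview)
Your proof is correct and rests on the same convexity/Fenchel--Young idea as the paper's: where the paper applies the pointwise scalar inequality $ts+\ep t\ln t \geq \ep t-\ep e^{-s/\ep}$ (with $t=d\gamma/d(\rho_1\otimes\rho_2)$, $s=c-u-v$) in a single stroke, you obtain the same bound by splitting it into Gibbs' inequality $\KL(\gamma|\pi)\geq 0$ for the normalized measure $\pi$ together with the scalar estimate $\log Z\leq Z-1$. Your explicit treatment of the degenerate cases ($\gamma\not\ll\rho_1\otimes\rho_2$, $Z=+\infty$) and of the integrability bookkeeping is in fact more careful than the paper's.
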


\begin{proof}
First of all we can assume $\gamma \ll \ka$, otherwise the right hand side would be $+ \infty$ and so the inequality would be verified; then if we denote (with a slight abuse of notation) $\gamma(x,y)$ the density of $\gamma$ with respect to $\ka$, we get

\begin{align*}
\ep \KL(\gamma|\ka) &= \int_{X\times Y}c d\gamma + \ep\int_{X\times Y}\gamma\log\gamma  d\left(\rho_1\otimes\rho_2\right) \\
&= \int_{X\times Y}(c+\ep\log\gamma-u-v)\cdot\gamma d\rho_1\otimes\rho_2 + \int_X ud\rho_1 + \int_Y vd\rho_2 \\
&= \int_X u d\rho_1 + \int_Y v d\rho_2 + \int_{X\times Y}\left(\ep\log\gamma+c-u-v\right)\cdot \gamma d\left(\rho_1\otimes\rho_2\right) \\
&\geq \int_X u d\rho_1 + \int_Y v d\rho_2- \ep\int_{X\times Y} e^{\frac{u+v-c}{\ep}}d\left(\rho_1\otimes\rho_2\right) + \ep \\
&= D_{\ep}(u,v) + \ep,
\end{align*}
where we used $ts + \ep t\ln t - \ep \geq -\ep e^{-s/\ep}$, with equality if $t = e^{-s/\ep}$. Notice in particular that, as we wanted, there is equality iff $\gamma = e^{(u(x)+v(y)-c(x,y))/\ep} \cdot \rho_1 \otimes \rho_2 = \kappa(e^{u/\ep}, e^{v/\ep}) $.
\end{proof}

\begin{prop}[Equivalence and complementarity condition]\label{prop:equiv_comp}
Let $\ep>0$ be a positive number, $(X,d_X)$ and $(Y,d_Y)$ be Polish metric spaces, $c:X\times Y\to \mathbb{R}$ be a bounded cost function, $\rho_1 \in \P(X)$, $\rho_2 \in \P(Y)$ be probability measures and let $\kappa$ as in \eqref{eqn:Gibbs}. Then given $u^* \in \Lexp(\rho_1) , v^* \in \Lexp(\rho_2)$, the following are equivalent:
\begin{enumerate}
\item \emph{(Maximizers)} $u^*$ and $v^*$ are maximizing potentials for \eqref{eq:dualitySep};
\item \emph{(Maximality condition)} $\Fcep (u^*)=v^*$ and $\Fcep(v^*)=u^*$;
\item \emph{(Schr\"{o}dinger system)} let $\gamma^*=\kappa(e^{u^*/\ep}, e^{v^*/\ep})=e^{(u^*(x)+v^*(y)-c(x,y))/\ep} \cdot \rho_1 \otimes \rho_2$, then $\gamma^* \in \Pi(\rho_1, \rho_2)$; 
\item \emph{(Duality attainement) }$\OTep(\rho_1,\rho_2) = D_{\ep} (u^*,v^*) +\ep$.
\end{enumerate}
Moreover in those cases $\gamma^*$, as defined in 3, is also the (unique) minimizer for the problem \eqref{intro:mainKL}
\end{prop}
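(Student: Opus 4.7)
The plan is to prove the equivalence by establishing the cycle $(1) \Rightarrow (2) \Rightarrow (3) \Rightarrow (4) \Rightarrow (1)$, and then derive the uniqueness of the primal minimizer as a byproduct. All four implications are short once we correctly assemble Lemma~\ref{lemma:dual} (optimality of the $(c,\ep)$-transform in one variable) and Lemma~\ref{lem:easydual} (the weak duality inequality with equality characterization).

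For $(1) \Rightarrow (2)$: Since $(u^*,v^*)$ maximize $D_\ep$, freezing $u^*$ shows that $v^*$ maximizes $v\mapsto D_\ep(u^*,v)$; by Lemma~\ref{lemma:dual} the unique such maximizer is $\Fcep(u^*)$, so $v^*=\Fcep(u^*)$. Symmetrically $u^*=\Fcep(v^*)$. For $(2)\Rightarrow (3)$: unwinding the definition of $\Fcep$, the identity $v^*=\Fcep(u^*)$ is exactly
\[
\int_X e^{(u^*(x)+v^*(y)-c(x,y))/\ep}\,d\rho_1(x)=1 \quad\text{for } \rho_2\text{-a.e. }y,
\]
which says precisely that the density of $\gamma^*$ with respect to $\rho_1\otimes\rho_2$ integrates to $1$ in $x$ for $\rho_2$-a.e.\ $y$; hence $(e_2)_\sharp \gamma^* = \rho_2$. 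The analogous computation using $u^*=\Fcep(v^*)$ yields $(e_1)_\sharp \gamma^* = \rho_1$, so $\gamma^*\in\Pi(\rho_1,\rho_2)$.

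For $(3)\Rightarrow (4)$: by construction $\gamma^*=\kappa(e^{u^*/\ep},e^{v^*/\ep})$, so the equality case in Lemma~\ref{lem:easydual} gives $\ep\KL(\gamma^*|\ka)=D_\ep(u^*,v^*)+\ep$. On the other hand, applying the inequality in Lemma~\ref{lem:easydual} with the \emph{same} $(u^*,v^*)$ to an arbitrary $\gamma\in\Pi(\rho_1,\rho_2)$ yields $\ep\KL(\gamma|\ka)\geq D_\ep(u^*,v^*)+\ep=\ep\KL(\gamma^*|\ka)$. Thus $\gamma^*$ is a primal minimizer and $\OTep(\rho_1,\rho_2)=\ep\KL(\gamma^*|\ka)=D_\ep(u^*,v^*)+\ep$. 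For $(4)\Rightarrow (1)$: Lemma~\ref{lem:easydual} applied to arbitrary $(u,v)\in \Lexp(\rho_1)\times\Lexp(\rho_2)$ and any $\gamma\in\Pi(\rho_1,\rho_2)$ gives $\OTep(\rho_1,\rho_2)\geq D_\ep(u,v)+\ep$ after taking infimum over $\gamma$; combined with $\OTep(\rho_1,\rho_2)=D_\ep(u^*,v^*)+\ep$ this shows $D_\ep(u^*,v^*)\geq D_\ep(u,v)$, i.e.\ $(u^*,v^*)$ is a maximizer.

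For the final uniqueness assertion, suppose $\tilde\gamma\in\Pi(\rho_1,\rho_2)$ is any minimizer in \eqref{intro:mainKL}. From $(4)$ and Lemma~\ref{lem:easydual} we have
\[
\ep\KL(\tilde\gamma|\ka)=\OTep(\rho_1,\rho_2)=D_\ep(u^*,v^*)+\ep,
\]
so $\tilde\gamma$ achieves equality in Lemma~\ref{lem:easydual}, and the equality clause of that lemma forces $\tilde\gamma=\kappa(e^{u^*/\ep},e^{v^*/\ep})=\gamma^*$. The argument is essentially bookkeeping: the only potential obstacle is to make sure that in $(2)\Rightarrow(3)$ the pointwise identity $v^*=\Fcep(u^*)$ (valid $\rho_2$-a.e.) transfers to the correct marginal statement, which is handled by Fubini since $e^{(u^*+v^*-c)/\ep}\in L^1(\rho_1\otimes\rho_2)$ thanks to $u^*\in\Lexp(\rho_1)$, $v^*\in L^\infty(\rho_2)$ (from Lemma~\ref{lemma:F1F2}(i)) and $c$ bounded.
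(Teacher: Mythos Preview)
Your proof is correct and follows essentially the same route as the paper: the same cycle $(1)\Rightarrow(2)\Rightarrow(3)\Rightarrow(4)\Rightarrow(1)$, using Lemma~\ref{lemma:dual} for $(1)\Rightarrow(2)$ and Lemma~\ref{lem:easydual} (both the inequality and its equality case) for the remaining steps and the uniqueness of $\gamma^*$. Your presentation of $(2)\Rightarrow(3)$ is slightly more explicit (unwinding the identity $v^*=\Fcep(u^*)$ directly into the marginal condition, with a Fubini justification), whereas the paper phrases it as $(\pi_i)_\sharp\kappa(e^{u/\ep},e^{v/\ep})=e^{(u_i-\Fcep(u_{3-i}))/\ep}\rho_i$, but the content is the same.
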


\begin{proof} 
We will prove $ 1 \Rightarrow 2  \Rightarrow 3  \Rightarrow 4  \Rightarrow 1$.

\begin{itemize}

\item[1. $\Rightarrow$ 2.] This is a straightforward application of Lemma \ref{lemma:dual}. In fact thanks to \eqref{est:optcond} we have $D_{\ep} (u^*, \Fcep(u^*)) \geq D_{\ep}(u^*,v^*)$; however, by the maximality of $u^*,v^*$ we have also $D_{\ep}(u^*,v^*) \geq D_{\ep} (u^*, \Fcep(u^*)) $, and so we conclude that $D_{\ep}(u^*,\Fcep(u^*))=D_{\ep}(u^*,v^*)$. Thanks to \eqref{est:optcond2} we then deduce that $v^*=\Fcep(u^*)$. We can follow a similar argument to prove that conversely $u^*=\Fcep(v^*)$.

\item[2. $\Rightarrow$ 3.] A simple calculation shows for every $u \in \Lexp(\rho_1) $ and $ v \in \Lexp(\rho_2)$ we have $(\pi_1)_{\sharp} ( \kappa(e^{u/\ep}, e^{v /\ep})) = e^{(u-v^{(c, \ep)})/\ep}\rho_1$ and similarly $(\pi_2)_{\sharp} ( \kappa(e^{u/\ep}, e^{v /\ep})) = e^{(v-u^{(c, \ep)})/\ep}\rho_2$. So if we assume 2. it is trivial to see that in fact $\gamma^* = \kappa(e^{u^*/\ep}, e^{v^* /\ep})  \in \Pi (\rho_1, \rho_2)$

\item[3. $\Rightarrow$ 4.] since $\gamma^* \in \Pi(\rho_1, \rho_2)$, from Lemma \ref{lem:easydual} we have
\begin{align}\label{eqn:ineq1compl}\ep \KL(\gamma^*|\ka) &\geq D_{\ep} ( u,v) + \ep \qquad  &\forall u \in \Lexp(\rho_1), v \in \Lexp(\rho_2) \\
\label{eqn:ineq2compl}
\ep \KL(\gamma|\ka) &\geq D_{\ep} ( u^*,v^*) + \ep  &\forall \gamma \in \Pi(\rho_1,\rho_2). 
\end{align}
Moreover, since by definition $\gamma^*= \kappa(e^{u^*/\ep}, e^{v^*/\ep})$, Lemma \ref{lem:easydual} assure us also that
\begin{equation}\label{eqn:ineq3compl} \ep \KL(\gamma^*|\ka) \geq D_{\ep} ( u^*,v^*) + \ep.
\end{equation}
Putting now \eqref{eqn:ineq1compl},\eqref{eqn:ineq2compl} and \eqref{eqn:ineq3compl} together we obtain
$$ \ep \KL(\gamma|\ka) \geq  D_{\ep} ( u^*,v^*) + \ep =  \ep \KL(\gamma^*|\ka) \geq  D_{\ep} ( u,v) + \ep;$$
in particular we have $\ep \KL(\gamma|\ka) \geq \ep\KL(\gamma^*|\ka)$ which grants us that $\gamma^*$ is a minimizer for \eqref{intro:mainKL} and that in particular $\OTep(\rho_1,\rho_2) =  \ep\KL(\gamma^*|\ka) = D_{\ep} ( u^*,v^*)+\ep$.

\item[4. $\Rightarrow$ 1.] Looking at \eqref{eqn:uvgamma} and minimizing in $\gamma$ we find that
$$\OTep(\rho_1,\rho_2) \geq D_{\ep} ( u,v) + \ep \qquad  \forall u \in \Lexp(\rho_1), v \in \Lexp(\rho_2);$$
using that by hypotesis $\OTep(\rho_1,\rho_2)= D_{\ep} ( u^*,v^*) + \ep$, we get that
$$D_{\ep} ( u^*,v^*)  \geq D_{\ep} ( u,v)   \qquad  \forall u \in \Lexp(\rho_1), v \in \Lexp(\rho_2),$$
that is, $u^*,v^*$ are maximizing potentials for \eqref{eq:dualitySep}.

\end{itemize}
Notice that in proving $3 \Rightarrow 4$ we incidentally proved that $\gamma^*$ is the (unique) minimizer.
\end{proof}

Finally, we conclude this section by giving a short proof of the duality between \eqref{intro:mainKL} and \eqref{eq:dualitySep}. 

\begin{prop}[General duality]\label{prop:duality2N}
Let $\ep>0$ be a positive number, $(X,d_X)$ and $(Y,d_Y)$ be Polish metric spaces, $c:X\times Y\to \mathbb{R}$ be a bounded cost function, $\rho_1 \in \P(X)$, $\rho_2 \in \P(Y)$ be probability measures. Then duality holds
\[
\OTep(\rho_1,\rho_2) = \max \left\{ D_{\ep}(u,v) \; : \; u \in \Lexp(\rho_1) , v \in \Lexp(\rho_2) \right\} +\ep.
\]
\end{prop}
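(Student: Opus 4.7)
\emph{Plan.} This statement is essentially a packaging of results already established, so my plan is to pair the easy inequality from Lemma~\ref{lem:easydual} with the existence of maximizers from Theorem~\ref{thm:kanto2Nmax} and the equivalence (1)$\Leftrightarrow$(4) in Proposition~\ref{prop:equiv_comp}. First I would record the one-sided bound: Lemma~\ref{lem:easydual} asserts that for every admissible triple $(\gamma,u,v)$ with $\gamma \in \Pi(\rho_1,\rho_2)$, $u \in \Lexp(\rho_1)$ and $v \in \Lexp(\rho_2)$, one has $\ep \KL(\gamma|\ka) \ge D_{\ep}(u,v)+\ep$. Taking the infimum over $\gamma$ and the supremum over $(u,v)$ yields
\[
\OTep(\rho_1,\rho_2) \;\ge\; \sup\bigl\{ D_{\ep}(u,v) \;:\; u \in \Lexp(\rho_1),\, v \in \Lexp(\rho_2)\bigr\} + \ep.
\]

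For the matching upper bound I would exhibit an explicit optimizer. Theorem~\ref{thm:kanto2Nmax} produces a pair $(u_0,v_0) \in L^\infty(\rho_1) \times L^\infty(\rho_2)$ attaining the supremum in the dual. By the implications $1 \Rightarrow 2 \Rightarrow 3$ in Proposition~\ref{prop:equiv_comp}, the maximality of $(u_0,v_0)$ gives $v_0 = \Fcep(u_0)$ and $u_0 = \Fcep(v_0)$, which in turn guarantees that the candidate plan
\[
\gamma^* \;=\; \kappa\bigl(e^{u_0/\ep},e^{v_0/\ep}\bigr) \;=\; e^{(u_0(x)+v_0(y)-c(x,y))/\ep}\,\rho_1\otimes\rho_2
\]
actually lies in $\Pi(\rho_1,\rho_2)$. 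Plugging $(\gamma^*,u_0,v_0)$ into the equality case of Lemma~\ref{lem:easydual}, one gets $\ep \KL(\gamma^*|\ka) = D_{\ep}(u_0,v_0) + \ep$. Since $\gamma^*$ is admissible, $\OTep(\rho_1,\rho_2) \le \ep \KL(\gamma^*|\ka) = D_{\ep}(u_0,v_0) + \ep$, which combined with the first inequality closes the loop and upgrades the supremum to a maximum.

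There is really no hard step left: all of the analytic work (boundedness of maximizers, lower semicontinuity via Banach--Alaoglu and convexity of the exponential, and the algebraic identity $\gamma = \kappa(e^{u/\ep},e^{v/\ep})$ saturating the pointwise inequality $ts + \ep t \ln t - \ep \ge -\ep e^{-s/\ep}$) has been done in Lemmas~\ref{lemma:F1F2}--\ref{lemma:betterpotentials}, Theorem~\ref{thm:kanto2Nmax} and Proposition~\ref{prop:equiv_comp}. The only point of care is to state the result in the form ``$\max$'' rather than ``$\sup$'', which is legitimized precisely by Theorem~\ref{thm:kanto2Nmax}. Thus the proof amounts to one short paragraph invoking these three ingredients in sequence.
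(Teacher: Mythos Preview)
Your proposal is correct and essentially coincides with the paper's proof: the paper invokes Theorem~\ref{thm:kanto2Nmax} to get the maximizing pair $(u^*,v^*)$ and then appeals to the implication $1\Rightarrow 4$ of Proposition~\ref{prop:equiv_comp} to conclude. You unpack this slightly by tracing $1\Rightarrow 2\Rightarrow 3$ and then using the equality case of Lemma~\ref{lem:easydual}, which is precisely how $3\Rightarrow 4$ is proven in Proposition~\ref{prop:equiv_comp}; the argument and the ingredients are the same.
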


\begin{proof} From Theorem \ref{thm:kanto2Nmax} we have the existence of a maximizing pair of potentials $u^*,v^*$. In particular we have
$$ \max \left\{ D_{\ep}(u,v) \; : \; u \in \Lexp(\rho_1) , v \in \Lexp(\rho_2) \right\} +\ep = D_{\ep} (u^*,v^*) +\ep;$$
this, together with point 4 in Proposition \ref{prop:equiv_comp} (which is true since 1 holds true), proves the duality. 
\end{proof}

By a similar argument, one can show that the duality holds also for the functional $\Sep(\rho_1,\rho_2;\refmx,\refmy)$.
\begin{cor}
Let $\ep>0$ be a positive number, $(X,d_X,\refmx)$ and $(Y,d_Y,\refmy)$ be Polish metric measure spaces, $c:X\times Y\to \mathbb{R}$ be a bounded cost function, $\rho_1 \in \P(X)$, $\rho_2 \in \P(Y)$ be probability measures. Then duality holds

\[
\Sep(\rho_1,\rho_2;\refmx,\refmy) = \max \left\lbrace \tilde{D}_{\ep}(u,v) : u\in\Lexp(\refmx), v\in\Lexp(\refmy) \right\rbrace + \ep.
\]
\end{cor}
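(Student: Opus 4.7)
The plan is to mimic the strategy used in the preceding corollary: perform a change of variables that reduces $\tilde{D}_\ep$ to $D_\ep$, then invoke Proposition~\ref{prop:duality2N} together with the lemma identifying
\[
\Sep(\rho_1,\rho_2;\refmx,\refmy) = \OTep(\rho_1,\rho_2) + \ep \KL(\rho_1|\refmx) + \ep \KL(\rho_2|\refmy).
\]

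I would first dispose of the degenerate case $\KL(\rho_1|\refmx) + \KL(\rho_2|\refmy) = +\infty$, for which $\Sep = +\infty$ by the lemma and one checks that $\sup \tilde{D}_\ep = +\infty$ as well (by choosing potentials concentrated where the densities are most singular). Assume therefore that $\rho_i \ll \refmi$ and write $g_i := d\rho_i/d\refmi$. The key step is to introduce the map
\[
T(u,v) := \bigl( u - \ep \log g_1,\; v - \ep \log g_2 \bigr),
\]
which is essentially a bijection between $\Lexp(\refmx) \times \Lexp(\refmy)$ and $\Lexp(\rho_1) \times \Lexp(\rho_2)$. A direct three-line substitution, using $d\rho_i = g_i\, d\refmi$ in the linear terms and $e^{(u+v-c)/\ep} d(\refmx \otimes \refmy) = e^{(T(u)+T(v)-c)/\ep} d(\rho_1 \otimes \rho_2) \cdot g_1^{-1}g_2^{-1} \cdot g_1 g_2$ in the exponential term, yields the identity
\[
\tilde{D}_\ep(u,v) = D_\ep(T(u,v)) + \ep \KL(\rho_1|\refmx) + \ep \KL(\rho_2|\refmy).
\]

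Taking the supremum on both sides, using that $T$ is surjective onto the natural domain of $D_\ep$, and adding $\ep$ gives
\[
\sup \tilde{D}_\ep + \ep = \bigl( \sup D_\ep + \ep \bigr) + \ep \KL(\rho_1|\refmx) + \ep \KL(\rho_2|\refmy) = \OTep(\rho_1,\rho_2) + \ep \KL(\rho_1|\refmx) + \ep \KL(\rho_2|\refmy),
\]
where the last equality is Proposition~\ref{prop:duality2N}. Combined with the lemma linking $\Sep$ and $\OTep$, this proves the identity. The maximum is attained because Theorem~\ref{thm:kanto2Nmax} provides a maximizer $(u^*,v^*)$ for $D_\ep$, and then $T^{-1}(u^*,v^*) = (u^* + \ep \log g_1, v^* + \ep \log g_2)$ maximizes $\tilde{D}_\ep$.

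The only mildly delicate point, and the main piece of bookkeeping, is how to interpret $T$ on the $\refmi$-null-but-$\rho_i$-positive part, equivalently on $\{g_i = 0\}$ where $\log g_i = -\infty$: one adopts the convention that the components of $T(u,v)$ are $-\infty$ there, and checks that this does not affect the value of $\tilde{D}_\ep$ or $D_\ep$ since modifying a potential on a $\rho_i$-null set changes neither functional at the optimum (where one may freely set the potentials to $-\infty$ on $\{g_i=0\}$). Alternatively, as indicated in Remark~\ref{rmk:change}, one can bypass this issue entirely by replicating the proof of Theorem~\ref{thm:kanto2Nmax} with $\Fcep$ replaced by the modified transform $\Fcep_*$, since all the estimates in Proposition~\ref{prop:EstPot} and Lemma~\ref{lemma:betterpotentials} transfer verbatim under the shift by $\ep \log \rho_i$.
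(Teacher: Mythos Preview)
Your proposal is correct and follows essentially the same approach as the paper. The paper does not give an explicit proof of this corollary, stating only that it follows ``by a similar argument''; the argument in question is precisely the change of variables $T:(u,v)\mapsto (u-\ep\log\rho_1,\,v-\ep\log\rho_2)$ used in the proof of the preceding corollary, together with the lemma $\Sep=\OTep+\ep\KL(\rho_1|\refmx)+\ep\KL(\rho_2|\refmy)$ and Proposition~\ref{prop:duality2N}, which is exactly what you do.
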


\section{Convergence of the Sinkhorn / IPFP Algorithm}
\label{sec:convergenceIPFP}

\quad \quad In this section, we give an alternative proof for the convergence of the Sinkhorn algorithm. The aim of the Iterative Proportional Fitting Procedure (IPFP, also known as Sinkorn algorithm) is to construct the measure $\gammaep$ realizing minimum in \eqref{intro:mainKL} by alternatively matching one marginal distribution to the target marginals $\rho_1$ and $\rho_2$: this leads to the construction of the IPFP sequences $(a^n)_{n\in\N}$ and $(b^n)_{n\in\N}$, defined in \eqref{eq:IPFPiteration}.

We now look at the new variables $u_n := \ep \ln(a^n)$ and $v_n ;= \ep \ln(b^n)$: we can then rewrite the system \eqref{eq:IPFPiteration} as 
\[
\begin{array}{lcl}
\ds v_n(y)/\ep = & - \log\left(\int_X k(x,y)e^{ \frac{u_{n-1}(x)}{\ep}}d\rho_1\right)  \\
\ds u_n(x)/\ep  = & - \log\left(\int_Y k(x,y)e^{ \frac{v_{n}(y)}{\ep}}d\rho_2\right) 
\end{array}.
\]
In other words, using the $(c,\ep)-$transform and the expression of $k$ given in \eqref{eqn:Gibbs}, $v^n(y) = (u^{(n-1)})^{(c,\ep)}$ and $u^n(y) = (v^n)^{(c,\ep)}$.

\begin{teo}\label{thm:convIPFP}
Let $(X,d_X)$ and $(Y,d_Y)$ be Polish metric spaces, $\rho_1 \in \P(X)$ and $\rho_2\in\P(Y)$ be probability measures and $c:X\times Y\to \R$ be a Borel bounded cost. If $(a^n)_{n\in\N}$ and $(b^n)_{n\in\N}$ are the IPFP sequences defined in \eqref{eq:IPFPiteration}, then there exists a sequence of positive real numbers $(\lambda^n)_{n \in \N}$ such that
\[
a^n/\lambda^n\to a \text{ in } L^p(\rho_1) \quad \text{ and } \quad \lambda^nb^n \to b \text{ in } L^p(\rho_2), \quad 1\leq p <+\infty,
\] 
where $(a,b)$ solve the Schr\"{o}dinger problem. In particular, the sequence $\gamma^n = a^nb^n k$, where $k$ is defined in \eqref{eqn:Gibbs} converges in $L^p(\rho_1\otimes\rho_2)$ to $\gammaep_{opt}$, the density of the minimizer of \eqref{intro:mainKL} with respect to $\rho_1 \otimes \rho_2$, for any $1\leq p <+\infty$. 
\end{teo}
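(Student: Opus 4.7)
The plan is to interpret IPFP as an alternating maximization procedure for the dual functional $D_\varepsilon$, and then to exploit the compactness of the $(c,\varepsilon)$-transform to extract convergent subsequences whose limit is identified with the unique maximizer. Setting $u_n := \varepsilon \log a^n$ and $v_n := \varepsilon \log b^n$, the iteration becomes $v_n = \mathcal{F}^{(c,\varepsilon)}(u_{n-1})$ and $u_n = \mathcal{F}^{(c,\varepsilon)}(v_n)$. By Lemma~\ref{lemma:dual}, each half-step maximizes $D_\varepsilon$ in one variable, so the sequence $D_\varepsilon(u_n, v_n)$ is monotone non-decreasing and, by Proposition~\ref{prop:duality2N}, bounded above by $\OTep(\rho_1, \rho_2) - \varepsilon$; hence it converges to some $L \in \mathbb{R}$.

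The iteration does not automatically produce uniformly bounded potentials, so I would apply at each step a shift $\alpha_n \in \mathbb{R}$, replacing $(u_n, v_n)$ by $(\tilde u_n, \tilde v_n) := (u_n - \alpha_n, v_n + \alpha_n)$. By Lemma~\ref{lemma:betterpotentials}, $\alpha_n$ can be chosen so that both $\|\tilde u_n\|_{\infty}$ and $\|\tilde v_n\|_{\infty}$ are bounded by $\tfrac{3}{2}\|c\|_{\infty}$; this shift leaves $D_\varepsilon$ invariant and the iteration in the shifted variables reads $\tilde v_n = \mathcal{F}^{(c,\varepsilon)}(\tilde u_{n-1}) + c_n$ for a bounded constant $c_n$ that does not affect the associated plan $\gamma^n$. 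Setting $\lambda^n := e^{\alpha_n/\varepsilon}$, we have $a^n/\lambda^n = e^{\tilde u_n/\varepsilon}$ and $\lambda^n b^n = e^{\tilde v_n/\varepsilon}$. By the compactness of the $(c,\varepsilon)$-transform (Proposition~\ref{prop:EstPot}(iv)), there is a subsequence $(n_k)$ along which $\tilde u_{n_k} \to u^*$ in $L^p(\rho_1)$ and $\tilde v_{n_k} \to v^*$ in $L^p(\rho_2)$, and, after a further extraction, also pointwise a.e.

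The heart of the argument is to identify $(u^*, v^*)$ with the unique maximizer of $D_\varepsilon$. Combining a.e.\ convergence with the uniform $L^\infty$ bound and the boundedness of $c$, dominated convergence inside the integral defining $\mathcal{F}^{(c,\varepsilon)}$ gives $\mathcal{F}^{(c,\varepsilon)}(\tilde u_{n_k}) \to \mathcal{F}^{(c,\varepsilon)}(u^*)$ in $L^p$; since $\tilde v_{n_k+1}$ differs from $\mathcal{F}^{(c,\varepsilon)}(\tilde u_{n_k})$ by the bounded constant $c_{n_k+1}$, it also has an $L^p$-limit $\tilde v$ along a further subsequence. By monotonicity and sandwiching, the three values $D_\varepsilon(\tilde u_{n_k}, \tilde v_{n_k})$, $D_\varepsilon(\tilde u_{n_k}, \tilde v_{n_k+1})$ and $D_\varepsilon(\tilde u_{n_k+1}, \tilde v_{n_k+1})$ all converge to $L$; since $D_\varepsilon$ is continuous along uniformly bounded $L^p$-sequences (again by dominated convergence on the exponential term), we obtain $D_\varepsilon(u^*, \tilde v) = D_\varepsilon(u^*, v^*) = L$, and the strict concavity statement \eqref{est:optcond2} forces $\tilde v = v^* = \mathcal{F}^{(c,\varepsilon)}(u^*)$ up to the absorbed constant, and symmetrically $u^* = \mathcal{F}^{(c,\varepsilon)}(v^*)$. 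By Proposition~\ref{prop:equiv_comp}, $(u^*, v^*)$ is a maximizer of $D_\varepsilon$; Theorem~\ref{thm:kanto2Nmax} guarantees uniqueness up to the trivial translation, which is pinned down by our normalization, so every subsequential limit coincides and the full shifted sequence converges.

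Finally, setting $a := e^{u^*/\varepsilon}$ and $b := e^{v^*/\varepsilon}$, the uniform $L^\infty$ bound on $(\tilde u_n, \tilde v_n)$ together with dominated convergence yields $a^n/\lambda^n \to a$ in $L^p(\rho_1)$ and $\lambda^n b^n \to b$ in $L^p(\rho_2)$ for every $1 \leq p < \infty$; Proposition~\ref{prop:equiv_comp} identifies $(a,b)$ as a solution of the Schr\"odinger system. The convergence $\gamma^n = (a^n/\lambda^n)(\lambda^n b^n)\,k \to a(x)b(y)k(x,y) = \gammaep_{opt}$ in $L^p(\rho_1 \otimes \rho_2)$ then follows from a product estimate and the uniform $L^\infty$ bound on each factor. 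I expect the main obstacle to be the passage to the limit in the $(c,\varepsilon)$-transform: because the transform is Lipschitz only in the $L^\infty$ norm whereas compactness only supplies $L^p$-convergence, the whole argument hinges on extracting a.e.\ convergent subsequences and running the dominated convergence theorem through the exponential integrand. A secondary subtlety is the bookkeeping of the normalization constants $\alpha_n$, which must be compatible with the iteration and exactly remove the translation invariance in the limit.
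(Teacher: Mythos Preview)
Your proposal is correct and follows essentially the same route as the paper: rewrite IPFP as alternating $(c,\varepsilon)$-transforms, use Lemma~\ref{lemma:dual} to get monotonicity of $D_\varepsilon$, normalize via Lemma~\ref{lemma:betterpotentials} to obtain uniform $L^\infty$ bounds, invoke the compactness of Proposition~\ref{prop:EstPot}(iv) to extract $L^p$-convergent subsequences, and identify the limit as the unique maximizer through \eqref{est:optcond2} and Proposition~\ref{prop:equiv_comp}. Your extra care in passing to the limit inside $\Fcep$ by extracting an a.e.\ convergent subsequence and applying dominated convergence is in fact a useful clarification, since Proposition~\ref{prop:EstPot}(iv) only asserts $L^\infty\!\to\!L^p$ Lipschitzness, not $L^p$-continuity on bounded sets; the paper's proof glosses over this point with the phrase ``continuity in $L^p$''.
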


\begin{proof}
Let $(a^n)_{n\in\N}$ and $(b^n)_{n\in\N}$ be the IPFP sequence defined in \eqref{eq:IPFPiteration}. Let us write $a^n = e^{u_n/\ep}$, $b^n = e^{v_n/\ep}$; then, in this new variables, we noticed that the iteration can be written with the help of the $(c, \ep)$-transform:
\[
\begin{cases}
v_{2n+1} = (u_{2n})^{(c,\ep)} \\
u_{2n+1} = u_{2n} \\ 
     \end{cases}, \quad \begin{cases}
v_{2n+2} =  v_{2n+1} \\
u_{2n+2} = (v_{2n+1})^{(c,\ep)} \\ 
     \end{cases}.
\] 	
Notice that, as soon as $n\geq2$, we have $u_n \in L^{\infty}(\rho_1)$ and $v_n \in L^{\infty}(\rho_2)$ thanks to the regularizing properties of the $(c,\ep)$-transform proven in Lemma \ref{lemma:F1F2} and, moreover, thanks to \eqref{est:optcond}  and Proposition \ref{prop:duality2N} we have 
\[
\Dep(u_n,v_n)\leq \Dep(u_{n+1},v_{n+1}) \leq \dots \leq \OTep(\rho_1,\rho_2) - \ep.
\]
Then, by the same argument used in the proof of Lemma \ref{lemma:betterpotentials} it is easy to prove that there for each $n \geq 2$ there exists $\ell_n \in \R$ such that $ \|u_n - \ell_n \|_{\infty}, \|v_n +\ell_n\| \leq \frac 32 \|c\|_{\infty}$. Now, thanks to Proposition \ref{prop:EstPot} we have that the sequeces $u_n - \ell_n$ and $v_n +\ell_n$ are precompact in every $L^p$, for $1 \leq p < \infty$; in particular let us consider any limit point $u,v$. Then we have a subsequence $u_{n_k},v_{n_k}$ such that $u_{n_k}\to u$,$v_{n_k}\to v$ in $L^{\infty}$ and $u_{n_k+1} = (v_{n_k})^{(c,\ep)}$ (or the opposite). Using the continuity in $L^p$ of the $(c,\ep)$-transform, and the fact that an increasing and bounded sequence has vanishing increments, we obtain
\[
\Dep(\vcep,v) - \Dep(u,v) = \lim_{n_k\to\infty} \Dep(u_{n_k+1},v_{n_k+1}) - \Dep({u_{n_k}},v_{n_k}) = 0.
\]   
In particular, by \eqref{est:optcond2}, we have $u=\vcep$. Analogously, we obtain that $v=\ucep$ by doing the same calculation using the potentials $(u_{n_{k+2}}, v_{n_{k+2}})$ and then 
\[
\Dep(u,\ucep) - \Dep(u,v) = \lim_{n_k\to\infty} \Dep(u_{n_k+2},v_{n_k+2}) - \Dep({u_{n_k}},v_{n_k}) = 0.
\]
Now we can use Proposition \ref{prop:equiv_comp}: the implication $2 \Rightarrow 1$ proves that $(u,v)$ is a maximizer\footnote{in order to prove that there is a unique limit point at this stage, it is sufficient to take $\ell_n$ that minimizes $\| u_n - \ell_n - u\|_2$.}. In particular $a=e^{u/\ep}$, $b=e^{v/\ep}$ are solutions of the Schr\"{o}dinger equation and taking $\lambda^n=e^{\ell_n/\ep}$ we get the convergence result for $a^n$ and $b^n$, using that the exponential is Lipschitz in bounded domains.

In order to prove also the convergence of the plans, it is sufficient to note that for free we have $u_n+v_n \to u+v$ in $L^p(\rho_1 \otimes \rho_2)$, since now the translations are cancelled. Again, the fact that the exponential is Lipschitz on bounded domains and the boundedness of $k$, will let us conclude that in fact $\gamma^n \to \gamma$ in $L^p(\rho_1 \otimes \rho_2)$ for every $1 \leq p < \infty$.

\end{proof}

\begin{oss}
Notice that as long as we have more hypothesis on the smoothness of the cost function $c$ we can use precompactness of the sequences $u_n-\ell_n$ and $v_n +\ell_n$ on larger space, obtaining faster convergence. For example if $c$ is uniformly continuous we will get the uniform convergence instead of strong $L^p$ convergence.
\end{oss}

\section{Multi-marginal Schr\"odinger Problem}
\label{sec:multimarginal}

\quad In this section we generalize the results obtain previously for the Schr\"odinger problem with more than two marginals, including a proof of convergence of the Sinkhorn algorithm in the several marginals case.

We consider $(X_1,d_1), \dots, (X_N,d_N)$ Polish spaces, $\rho_1,\dots,\rho_N$ probability measures respectively in $X_1,\dots,X_N$ and $c:X_1\times\dots\times X_N\to \R$ a bounded cost. Define $\rhoN = \rho_1\otimes\dots\otimes\rho_N$ by the product measure. For every $\gamma \in \mathcal{M}(X_1\times\dots\times X_N)$, the relative entropy of $\gamma$ with respect to the \textit{Gibbs Kernel} $\K(x_1,\dots,x_N) = k^N(x_1,\dots,x_N)\rho^N = e^{-\frac{c(x_1,\dots,x_N)}{\ep}}d\rho_1\otimes\dots\otimes \rho_N$ is defined by
\begin{equation}\label{eq:defKL}
\ds\KLN(\gamma|\K) =\begin{cases}  \ds\int_{X_1\times\dots\times X_N}\gamma\log\left(\frac{\gamma}{k^N}\right)d\rhoN \qquad & \text{ if }\gamma \ll \rhoN \\ +\infty & \text{ otherwise.} \end{cases}
\end{equation}

An element $\gamma \in \Pi(\rho_1,\dots,\rho_N)$ is called coupling and is a probability measure on the product space $X_1\times \dots\times X_N$ having the $ith$-marginal equal to $\rho_i$, i.e $\gamma \in \mathcal{P}(X_1\times\dots\times X_N)$ such that $(e_i)_{\sharp}\gamma = \rho_i, \, \forall i \in \lbrace 1,\dots, N\rbrace$. 

The Multi-marginal Schr\"odinger problem is defined as the infimum of the Kullback-Leibler divergence $\KLN(\gamma|\K)$ over the couplings $\gamma \in \Pi(\rho_1,\dots,\rho_N)$
\begin{equation}\label{eq:primalSchrMult}
\OTNep(\rho_1,\dots,\rho_N) = \inf_{\gamma\in\Pi(\rho_1,\dots,\rho_N)}\ep\int_{X_1\times\dots\times X_N}\KL(\gamma|\K)d\gamma.
\end{equation} 

Optimal Transport problems with several marginals or its entropic-regularization appears, for instance, in economics G. Carlier and I. Ekeland \cite{CarEke}, and P. Chiappori, R. McCann, and N. Nesheim \cite{ChiMcCNes}; imaging (e.g. \cite{CutDou2014,SolPeyCut2015}); and in theoretical chemistry (e.g. \cite{DMaGerNenGorSei,GerGroGor19, GorSeiVig}). The first important instance of such kind of problems is attributed to Brenier's generalised solutions of the Euler equations for incompressible fluids \cite{Bre89, Bredual93, BreMin99}. 

We point out that the entropic-regularization of the multi-marginal transport problem leads to a problem of multi-dimensional matrix scaling \cite{FraLor89,Rag84}.  An important example in this setting is the Entropy-Regularized \textit{Wasserstein Barycenter} introduced by M. Agueh and G. Carlier in \cite{AguCar2011}. The Wasserstein Barycenter defines a non-linear interpolation between several probabilities measures generalizing the Euclidian barycenter and turns out to be equivalent to Gangbo-\'Swie\c{c}h cost \cite{GaSw}, that is $c(x_1,\dots,x_N) = \frac{1}{2}\Vert x_j-x_i\Vert^2$ . 

In the next section we extend to the multi-marginal setting the notions and properties of the Entropy $c$-transform done in section \ref{sec:RegularityEntr}. As a consequence, we generalise the proof of convergence of IPFP.
 
\subsection{Entropy-Transform}
 
\quad Analogously to definitions \eqref{eq:F1} and \eqref{eq:F2} in section \ref{sec:bounded}, we define the following Entropy $c$-transforms $\uunocep,\uduecep,\dots,\uNcep$. Notice that  the notation $\hat{u_i}$ stands for $\hat{u_i}= (u_1,\dots,u_{i-1},u_{i+1},\dots,u_N)$.

\begin{deff}[Entropic $c$-transform or $(c,\ep)$-transform]
Let $i\in \lbrace 1,\dots, N\rbrace$ and $\ep>0$ be a positive number. Consider $(X_i,d_{X_i})$ Polish spaces, $\rho_i \in \P(X_i)$ probability measures and let $c$ a bounded measurable cost on $X_1 \times \dots \times  X_N$. 
For every $i$, the Entropy $c$-transform $\uicep$ is defined by the functional $\Ficep: \prod_{j \neq i} \Lexp(\rho_{j})\to L^0(\rho_i)$,
\begin{equation}
\uicep (x_i)=\Ficep( \hat u_i ) (x_{i}) = - \ep\log\left(\int_{\prod_{j\neq i}X_j}e^{\frac{\sum_{j\neq i}u_j(x_j)-c(x_1,\dots,x_N)}{\ep}} d\left(\otimes_{j\neq i}\rho_j\right)\right).
\end{equation}
In particular, we have $\uicep\in \Lexp( \rho_{i})$. For $u_i \in \Lexp(X_i,\rho)$, we denote the constant $\lambda_{u_i}$ by
\[
\lambda_{u_i} = \ep\log\left(\int_{\prod_{j\neq i}X_j}e^{\frac{\sum_{j\neq i}u_j(x_j)}{\ep}} d\left(\otimes_{j\neq i}\rho_j\right)\right).
\]
\end{deff}
There is also the possibility to reconduce us to the case $N=2$: notice that if one considers the spaces $X_i $ and $Y_i=\Pi_{j \neq i} X_j$, then $c$ is also a natural cost function on $X_i \times Y_i$. We can  then consider $\rho_i$ as a measure on $X_i$ and $\otimes_{j \neq i} \rho_j$ as a measure on $Y_i$. In this way we able to construct an entropic $c$-trasform $\Fcep$ associated to this $2$-marginal problem and it is clear that 
$$ \Ficep ( \hat u_i ) = \Fcep \Bigl( \sum_{j \neq i} u_j \Bigr).$$ 

The following lemma extend lemma \ref{lemma:F1F2} in the multi-marginal setting. We omit the proof since it follow by similar arguments.

\begin{lemma}\label{lemma:entropytransbound} For every $i\in \lbrace 1,\dots,N\rbrace$, the Entropy $c$-transform $\uicep$ is well defined. Moreover,
\begin{itemize}
\item[(i)] $\uicep \in L^{\infty}\left(\rho_i\right)$. In particular,

\begin{align*}
-\| c \|_{\infty}- \ep \log \left( \int_{\prod_{j\neq i} X_i} e^{ \frac { \sum_{j\neq i}u_j (x_j)}{\ep}}  \, d \left(\otimes_{j\neq i}\rho_j\right)\right)  &\leq  \uicep(x_{i}) \leq \\
&\leq  \| c\|_{\infty} -\ep\log\left(\int_{\prod_{j\neq i} X_i} e^{ \frac { \sum_{j\neq i}u_j (x_j)}{\ep}}  \, d \left(\otimes_{j\neq i}\rho_j\right)\right).
\end{align*}
\item[(ii)] $\uicep \in \Lexp\left(\rho_i\right)$.

\item[(iii)] \begin{equation} \label{eqn:lambdau}
 |  \uicep  (x_i)+  \sum_{i \neq j} \lambda_{u_j} | \leq \| c \|_{\infty}.
\end{equation}

\item[(iv)] if $c$ is $L$-Lipschitz (resp. $\omega$-continuous), then $\uicep$ is $L$-Lipschitz (resp. $\omega$-continuous);
\item[(v)] if $\vert c\vert \leq M$, then $\operatorname{osc}(\uicep)\leq 2M$ and $\Ficep : \prod_{j \neq i} L^{\infty}(\rho_j)\to L^p(\rho_i)$ for $i=1, \ldots, n$ are compact operators for every  $1 \leq p < \infty$.
\end{itemize}
\end{lemma}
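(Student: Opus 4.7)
The plan is to reduce everything to the two-marginal statements Lemma \ref{lemma:F1F2} and Proposition \ref{prop:EstPot} via the identification already highlighted before the lemma: fixing $i$, we set $Y_i = \prod_{j \neq i} X_j$ endowed with the product measure $\otimes_{j \neq i}\rho_j$, view $c$ as a cost on $X_i \times Y_i$, and use
\[
\Ficep(\hat u_i)(x_i) = \Fcep\Bigl(\sum_{j\neq i} u_j\Bigr)(x_i).
\]
For this to be legitimate I first need $\sum_{j \neq i}u_j \in \Lexp(\otimes_{j \neq i}\rho_j)$, which follows immediately from Fubini since
\[
\int e^{(\sum_{j\neq i}u_j)/\ep}\, d\bigl(\otimes_{j\neq i}\rho_j\bigr) = \prod_{j\neq i}\int e^{u_j/\ep}\,d\rho_j,
\]
a product of positive finite quantities. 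In particular $\lambda_{\sum_{j\neq i}u_j} = \sum_{j \neq i}\lambda_{u_j}$, which will give the precise constants in (iii).

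With this reduction, parts (i), (ii), (iii) are now restatements of Lemma \ref{lemma:F1F2}(i)--(ii) applied to $\Fcep(\sum_{j \neq i}u_j)$: the upper and lower bounds in (i) are exactly those of the two-marginal version once one substitutes $u \leftrightarrow \sum_{j\neq i}u_j$, membership in $\Lexp(\rho_i)$ in (ii) is the two-marginal statement, and (iii) is the bound $|\uicep + \lambda_{\sum_j u_j}| \leq \|c\|_\infty$ rewritten using the additivity of $\lambda$ just observed.

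For (iv) I invoke Proposition \ref{prop:EstPot}(i)--(ii): if $c$ is $L$-Lipschitz (respectively $\omega$-continuous) on the product space with the product metric, then it is $L$-Lipschitz (respectively $\omega$-continuous) in $x_i$ uniformly in the remaining variables, so the two-marginal result applied to $\Fcep(\sum_{j\neq i}u_j)$ yields exactly the regularity of $\uicep$ in $x_i$. For (v), the oscillation bound $\mathrm{osc}(\uicep) \leq 2M$ is a direct consequence of (iii), since $\uicep$ differs from a constant by at most $M$ in sup norm. For the compactness assertion, I observe that the linear map $\Sigma_i: \prod_{j \neq i} L^\infty(\rho_j) \to L^\infty(\otimes_{j \neq i}\rho_j)$ defined by $(u_j)_{j \neq i} \mapsto \sum_{j \neq i} u_j$ is continuous and sends bounded sets to bounded sets, so the factorization $\Ficep = \Fcep \circ \Sigma_i$ together with the compactness of $\Fcep: L^\infty(\otimes_{j \neq i}\rho_j) \to L^p(\rho_i)$ given by Proposition \ref{prop:EstPot}(iv) produces the required compactness.

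The only genuinely new element is the bookkeeping to show that multivariable boundedness/continuity of $c$ really does pass to the two-marginal cost on $X_i \times Y_i$ and that the additivity of $\lambda_u$ correctly tracks through to the constants appearing in (i) and (iii); I expect no serious obstacle, as these are routine once the reduction is set up. In particular, no separate argument is needed for any of the five items.
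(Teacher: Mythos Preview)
Your proposal is correct and follows exactly the route the paper indicates: the paper explicitly remarks just before the lemma that $\Ficep(\hat u_i)=\Fcep\bigl(\sum_{j\neq i}u_j\bigr)$ via the identification $Y_i=\prod_{j\neq i}X_j$, and then omits the proof with the comment that it ``follows by similar arguments''. Your Fubini observation giving $\lambda_{\sum_{j\neq i}u_j}=\sum_{j\neq i}\lambda_{u_j}$ is precisely the bookkeeping needed to recover the constants in (i) and (iii), and the factorization $\Ficep=\Fcep\circ\Sigma_i$ is the right way to transfer the compactness in (v).
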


\subsection{Entropy-Kantorovich Duality}

\quad \quad We introduce the dual functional dual function $D^N_{\ep}:\Lexp(\rho_1)\times\dots\times \Lexp(\rho_N)\to[0,+\infty]$,
\begin{equation}\label{eqn:defDN}
D^N_{\ep}(u_1,\dots,u_N) = \sum^N_{i=1}\int_{X_i}u_id\rho_i - \ep\int_{X_1\times\dots\times X_N}e^{\frac{\sum^N_{i=1}u_i(x_i)-c(x_1,\dots,x_N)}{\ep}}d\left(\rho_1\otimes\dots\otimes\rho_N\right).
\end{equation}

In the sequel we will use the invariance by translation of the dual problem, and thus we introduce the following projection operator:

\begin{lemma}\label{lem:P} Let us consider the operator $P:  \prod_{i=1}^N L^{\infty}( \rho_i) \to \prod_{i=1}^N L^{\infty}( \rho_i)  $ defined as
$$P_i(u) = \begin{cases}  u_i - \lambda_{u_i} \qquad \qquad &\text{ if } i=1, \ldots, N-1 \\
 u_i+ \sum_{j \neq i}^{N-1} \lambda_{u_j}  & \text{ if } i=N. \end{cases}$$
 Then the following properties hold
 \begin{itemize}
 \item[(i)] $D^N_{\ep}(P(u))=D^N_{\ep} (u)$;
 \item[(ii)] $\| P_i(u)\|_{\infty} \leq osc ( u_i) +  |\sum_{i=1}^N \lambda_{u_i}| $, for all $i=1, \ldots, N$;
 \item[(iii)] let $v=P(u)$. Then $u_i= \Ficep ( \hat u_i )$ if only if $v_i= \Ficep ( \hat v_i )$.
 \end{itemize}
\end{lemma}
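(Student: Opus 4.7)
The key is that $P$ is a coordinated translation whose shifts $c_i := P_i(u) - u_i$ sum to zero: indeed $c_i = -\lambda_{u_i}$ for $i < N$ and $c_N = \sum_{j=1}^{N-1} \lambda_{u_j}$, so $\sum_{i=1}^N c_i = 0$. This vanishing-sum identity is the structural fact driving all three claims.

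For (i), the plan is to observe that the dual functional \eqref{eqn:defDN} is invariant under any translation $(u_i) \mapsto (u_i + c_i)$ satisfying $\sum_i c_i = 0$: the linear part $\sum_i \int u_i \, d\rho_i$ increases precisely by $\sum_i c_i = 0$, while the exponential factor $e^{(\sum_i u_i - c)/\ep}$ is multiplied by $e^{\sum_i c_i/\ep} = 1$. Applying this with our specific $c_i$ gives $D^N_{\ep}(P(u)) = D^N_{\ep}(u)$.

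For (ii), I would invoke Jensen's inequality applied to $t \mapsto e^{t/\ep}$, yielding $\inf u_i \leq \lambda_{u_i} \leq \sup u_i$ and hence $\|u_i - \lambda_{u_i}\|_{\infty} \leq \operatorname{osc}(u_i)$. For $i < N$ this directly bounds $\|P_i(u)\|_{\infty} \leq \operatorname{osc}(u_i)$, which is sharper than the claimed estimate. For $i = N$ I would rewrite
$$P_N(u) = u_N + \sum_{j=1}^{N-1} \lambda_{u_j} = \bigl(u_N - \lambda_{u_N}\bigr) + \sum_{i=1}^{N} \lambda_{u_i},$$
and apply the triangle inequality to get $\|P_N(u)\|_{\infty} \leq \operatorname{osc}(u_N) + \bigl|\sum_{i=1}^N \lambda_{u_i}\bigr|$.

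For (iii), a direct calculation on the definition of $\Ficep$ shows how a componentwise shift propagates through the transform: setting $v_j = u_j + c_j$,
$$\Ficep(\hat v_i) = -\ep \log \int e^{(\sum_{j\neq i}(u_j + c_j) - c)/\ep} \, d\bigl(\otimes_{j\neq i}\rho_j\bigr) = \Ficep(\hat u_i) - \sum_{j \neq i} c_j.$$
Using the zero-sum identity $\sum_j c_j = 0$, we have $-\sum_{j \neq i} c_j = c_i$, hence $\Ficep(\hat v_i) = \Ficep(\hat u_i) + c_i$. Combined with $v_i = u_i + c_i$, the equivalence $v_i = \Ficep(\hat v_i) \Leftrightarrow u_i = \Ficep(\hat u_i)$ is immediate. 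There is no real obstacle in this lemma beyond the initial bookkeeping verification $\sum_i c_i = 0$; once that is in hand, (i) is an invariance computation, (ii) is Jensen plus triangle, and (iii) follows from the change-of-variable identity above.
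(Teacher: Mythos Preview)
Your proof is correct and matches the paper's approach almost line for line: the paper also establishes (i) by verifying $\sum_i P_i(u)(x_i)=\sum_i u_i(x_i)$ (your zero-sum condition, rephrased), proves (ii) via $\inf u_i\le\lambda_{u_i}\le\sup u_i$ together with the same rewriting $P_N(u)=(u_N-\lambda_{u_N})+\sum_i\lambda_{u_i}$, and disposes of (iii) with the same translation identity for $\Ficep$. One small terminological slip: the bound $\inf u_i\le\lambda_{u_i}\le\sup u_i$ is not Jensen's inequality (Jensen for $t\mapsto e^{t/\ep}$ gives $\int u_i\,d\rho_i\le\lambda_{u_i}$, a different statement) but plain monotonicity of the integral and of $\log$; this has no effect on the validity of your argument.
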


\begin{proof} \begin{itemize}

\item[(i)] In order to prove $D^N_{\ep}(P(u))=D^N_{\ep} (u)$ we first observe that 
$$ \sum_{i=1}^N P_i ( u ) (x_i) =u_N(x_N) + \sum_{i=1}^{N-1} \lambda_{u_i} +  \sum_{i=1}^{N-1} (u_i(x_i) - \lambda_{u_i}  ) = \sum_{i=1}^N u_i(x_i).$$
In particular we have (here we denote $X=X_1 \times \cdots \times X_N$
\begin{align*}
 D^N_{\ep}(P(u)) &= \sum^N_{i=1}\int_{X_i}P_i(u)d\rho_i - \ep\int_{X_1\times\dots\times X_N}e^{\frac{\sum^N_{i=1}P_i(u)(x_i)-c(x_1,\dots,x_N)}{\ep}}d\left(\rho_1\otimes\dots\otimes\rho_N\right) \\
 & = \int_{X}\sum^N_{i=1} P_i(u) (x_i) \, d \rho^N - \ep\int_{X}e^{\frac{\sum^N_{i=1}P_i(u)(x_i)-c(x_1,\dots,x_N)}{\ep}}d \rho^N \\
 &= \int_{X}\sum^N_{i=1} u_i (x_i) d \rho^N - \ep\int_{X}e^{\frac{\sum^N_{i=1}u_i(x_i)-c(x_1,\dots,x_N)}{\ep}}d \rho^N= D^N_{\ep}(u)
\end{align*}

\item[(ii)] The inequality is not trivial only if $u_i \in L^{\infty}(\rho_i)$. In this case obviously we have $\inf u_i \leq \lambda_{u_i} \leq \sup u_i$ and in particular
$$- osc(u_i) = \inf u_i - \sup u_i \leq u_i(x_i) - \lambda_{u_i} \leq \sup u_i - \inf u_i = osc(u_i),$$
that is $\| u_i - \lambda_{u_i}\|_{\infty} \leq osc(u_i)$. This proves already the bound for $i <N$; for $i=N$ we have, letting $\lambda= \sum_{i=1}^N \lambda_{u_i}$
$$\| P_N(u ) \|_{\infty} = \| u_N  - \lambda_{u_N} + \sum_{i=1}^N \lambda_{u_i}\|_{\infty} \leq  \|  u_N  - \lambda_{u_N}\|_{\infty}  + | \lambda | \leq osc(u_N) +  | \lambda | $$

\item[(iii)] This is obvious from the fact that $\Ficep ( \widehat{ u_i - \lambda_i} )= \Ficep(\hat u_i) + \sum_{j \neq i} \lambda_j$.

\end{itemize}
\end{proof}

This projection operator allows us to generalize Lemma \ref{lemma:betterpotentials}:

\begin{lemma}\label{lemma:betterpotentials_multi} Let us consider $u_i \in \Lexp(\rho_i)$, for every $i=1, \ldots, N$. Then there exist $u_i^* \in \Lexp(\rho_i)$ for $i=1, \ldots , N$  such that
\begin{itemize}
\item $D^N_{\ep}(u_1, \ldots, u_N) \leq D^N_{\ep}(u_1^*,\ldots, u_N^*)$;
\item $ \| u_i^* \|_{\infty} \leq 3\| c\|_{\infty} $ for every $i=1, \ldots, N$.
\end{itemize}
\end{lemma}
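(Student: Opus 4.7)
The proof follows the blueprint of Lemma \ref{lemma:betterpotentials}: first iterate the entropic $c$-transforms to produce potentials with controlled oscillation, then apply the translation-type projection $P$ of Lemma \ref{lem:P} to obtain uniform $L^{\infty}$ bounds.

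First, starting from $u = (u_1,\dots,u_N)$, I iteratively update each coordinate in order, replacing the current $i$-th potential by its entropic transform computed from the other $N-1$ current potentials, for $i=1,2,\dots,N$. Call the resulting tuple $\tilde u = (\tilde u_1,\dots,\tilde u_N)$. At each step $D^N_{\ep}$ does not decrease: this is the natural multi-marginal analog of Lemma \ref{lemma:dual}, which follows by the same Fubini-plus-strict-concavity argument (with $\hat u_i$ fixed, the map $u_i \mapsto D^N_{\ep}$ attains its unique maximum at $u_i = \Ficep(\hat u_i)$). By Lemma \ref{lemma:entropytransbound}(v), each $\tilde u_i$ now lies in $L^{\infty}(\rho_i)$ with $\operatorname{osc}(\tilde u_i) \leq 2\|c\|_{\infty}$.

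Next, I control $\Lambda := \sum_{j=1}^N \lambda_{\tilde u_j}$. Since the very last step yielded $\tilde u_N$ as the entropic transform of $(\tilde u_1,\dots,\tilde u_{N-1})$, Lemma \ref{lemma:entropytransbound}(iii) provides the pointwise bound $\bigl|\tilde u_N(x_N) + \sum_{j<N}\lambda_{\tilde u_j}\bigr| \leq \|c\|_{\infty}$ for $\rho_N$-a.e.\ $x_N$. Exponentiating, integrating against $\rho_N$, and taking $\ep\log$ upgrades this to the same bound on $\lambda_{\tilde u_N} + \sum_{j<N}\lambda_{\tilde u_j} = \Lambda$, so $|\Lambda| \leq \|c\|_{\infty}$.

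Finally, set $u_i^* := P_i(\tilde u)$ as in Lemma \ref{lem:P}. Part (i) of that lemma gives $D^N_{\ep}(u^*) = D^N_{\ep}(\tilde u) \geq D^N_{\ep}(u)$, while part (ii) combined with the two bounds above yields
\[
\|u_i^*\|_{\infty} \leq \operatorname{osc}(\tilde u_i) + |\Lambda| \leq 2\|c\|_{\infty} + \|c\|_{\infty} = 3\|c\|_{\infty}
\]
for every $i$. The main conceptual hurdle is locating a handle on $|\Lambda|$; the key realization is that the final step of the cyclic transform encodes exactly a pointwise estimate between $\tilde u_N$ and the partial sum $\sum_{j<N}\lambda_{\tilde u_j}$, so one integration against $\rho_N$ closes the loop.
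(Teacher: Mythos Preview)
Your argument is correct and follows essentially the same route as the paper: iterate the $(c,\ep)$-transforms cyclically to obtain potentials with oscillation at most $2\|c\|_{\infty}$, use the pointwise bound \eqref{eqn:lambdau} at the last step to control $|\sum_j \lambda_{\tilde u_j}|$ by $\|c\|_{\infty}$, and then apply the projection $P$ of Lemma~\ref{lem:P}. Your explicit ``exponentiate, integrate, take $\ep\log$'' step is exactly what the paper hides behind the phrase ``it is easy to see''.
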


\begin{proof} Let us construct the following sequence of potentials:

\[
\begin{cases}
u_1^{1} = \Ficep \bigl(\,\widehat{u_1}\,\bigr) \\
u_2^{1} = u_2 \\ 
u_3^{} = u_3 \\ 
\, \cdots \\
u_N^{1} = u_N \\ 
     \end{cases}, \quad \begin{cases}
u^{2}_1 =  u_1^{1} \\
u_{2} = \Ficep \bigl(\,\widehat{u^{1}_2}\,\bigr) \\
u^{2}_3 =  u_3^{1} \\ 
\, \cdots  \\
u^{2}_N =  u_N^{1} \\ 
     \end{cases}, \dots \, , \quad \begin{cases}
u_1^{N} = u_1^{N-1} \\ 
u_2^{N} = u_2^{N-1} \\ 
u_3^{N} = u_3^{N-1} \\ 
\, \cdots \\
u^{N}_N = \Ficep \bigl(\,\widehat{u^{N-1}_N}\,\bigr). \\
     \end{cases}
\] 	
Then let us consider $u^*=P(u^N)$. First of all we notice that, using the multimarginal analogous of Lemma \ref{lemma:dual} we have
$$D_{\ep}^N(u_1, \ldots, u_N) \leq D_{\ep}^N(u^1_1, \ldots, u^1_N)  \leq \cdots \leq D_{\ep}^N(u^N_1, \ldots, u^N_N)= D_{\ep}^N(u^*_1, \ldots, u^*_N).$$
Then is clear by construction that for every $i=1, \ldots, N$ we have $u^N_i=u^i_i$ and in particular, by Lemma \ref{lemma:entropytransbound} (iv) we have $osc(u^N_i) \leq 2\| c \|_{\infty}$. 
Moreover, thanks to \eqref{eqn:lambdau} it is easy to see that $ | \sum_i \lambda_{u_i^N}| \leq \| c\|_{\infty}$. Now we can use Lemma \ref{lem:P} (ii) to conclude that in fact $\| u^*_i\| \leq 3 \| c \|_{\infty}.$

\end{proof}

Similarly to Theorem \ref{thm:kanto2Nmax}, Proposition \ref{prop:equiv_comp} and \ref{prop:duality2N}, the next theorem and the following Proposition state the existence of a maximizer and the Entropic-Kantorovich duality to the multi-marginal case, along with the complementarity conditions. Since the proofs follows the same lines of the case $N=2$, without big changes, we will omit them. 

\begin{teo}\label{thm:dualNmarg}
For every $i\in\lbrace 1,\dots,N\rbrace$, let $(X_i,d_i)$ be Polish metric spaces, $\rho_i \in \P(X_i)$ be a probability measures and $c:X_1\times\dots\times X_N\to \R$ be a bounded cost function. Then for every $\ep>0$, 

\begin{itemize}
\item[(i)] The dual function $D^N_{\ep}$ is well defined on its definition domain and moreover
\begin{equation}\label{est:optcondmm}
D^N_{\ep}(\uunocep, \hat{u_1}) \geq  D^N_{\ep}(u_1,\dots,u_N), \qquad \forall \, u_i \in \Lexp(\rho_i),
\end{equation}
\begin{equation}\label{est:optcond2mm}
D^N_{\ep}(\uunocep,\hat{u_1}) =  D^N_{\ep}(u_1,\dots,u_N) \text{ if and only if } u_1 = \uunocep.
\end{equation}

\item[(ii)] The supremum is attained, up to trivial transformations, for a unique $N$-tuple $(u^0_1,\dots,u^0_N)$ and in particular we have 
$$u^0_i\in L^{\infty}(\rho_i) ,\quad \forall i\in\left\lbrace 1,\dots,N \right\rbrace. $$
Moreover if we consider $\gamma^{0,N} = e^{ (\sum_i u^0_i(x_i) - c ) / \ep } \rho^N$ then $\gamma^{0,N}$ is the minimizer of \eqref{eq:primalSchrMult}
%

\item[(iii)] Duality holds:
\[
\OTNep(\rho_1,\dots,\rho_N) = \sup\left\{ D^N_{\ep}(u_1,\dots,u_N) \; : \; u_i \in \Lexp(\rho_i), i \in \left\lbrace 1,\dots, N \right\rbrace \right\} + \ep.
\]
\end{itemize}
\end{teo}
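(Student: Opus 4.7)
The plan is to imitate the two-marginal case. Part (i) is essentially free: the well-definedness of $D^N_\ep$ follows from the fact that if $u_i \in \Lexp(\rho_i)$ then $u_i^+ \in L^1(\rho_i)$ and the exponential integral is finite since $c$ is bounded. For the optimality inequality \eqref{est:optcondmm}, I would use Fubini to write
\[
D^N_\ep(u_1,\ldots,u_N) = \int_{X_1} u_1 \, d\rho_1 + \sum_{j\geq 2} \int u_j\, d\rho_j - \ep \int_{X_1} e^{(u_1 - u_1^{(c,\ep)})/\ep}\, d\rho_1,
\]
where I unfold the definition of $\uunocep$ to factor out the integral over $X_2\times\cdots\times X_N$. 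Then, pointwise in $x_1$, maximizing $t - \ep e^{(t-a)/\ep}$ gives $t=a$, which is exactly $u_1 = \uunocep$, and the strict concavity of that scalar function yields \eqref{est:optcond2mm}.

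For (ii), I would argue existence by the direct method, following Theorem \ref{thm:kanto2Nmax}. Pick a maximizing sequence; by Lemma \ref{lemma:betterpotentials_multi}, after replacing it with a better one, I can assume $\|u_i^n\|_\infty \leq 3\|c\|_\infty$ uniformly in $n$ and $i$. Banach--Alaoglu gives a subsequence with $u_i^n \rightharpoonup^* u_i^0$ in $L^\infty(\rho_i)$, and the linear part of $D^N_\ep$ passes to the limit. The nonlinear term is of the form $\int e^{f/\ep}\, d\rho^N$ applied to $f^n = \sum_i u_i^n - c$, which is $L^\infty$-weakly-$*$-convergent; since $t \mapsto e^{t/\ep}$ is convex and continuous, $f \mapsto \int e^{f/\ep}\,d\rho^N$ is weakly lower semicontinuous, and so
\[
\limsup_n D^N_\ep(u_1^n,\ldots,u_N^n) \leq D^N_\ep(u_1^0,\ldots,u_N^0).
\]
This forces $(u_i^0)$ to be a maximizer, and it already lies in $L^\infty(\rho_i)$ by construction. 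Uniqueness is where one must be careful: $D^N_\ep$ is invariant under $(u_1,\ldots,u_N) \mapsto (u_1+a_1,\ldots,u_N+a_N)$ whenever $\sum a_i = 0$, so the best one can hope for is uniqueness modulo these translations. This follows because $D^N_\ep$, viewed as a functional of the sum $\sum_i u_i(x_i)$, is strictly concave (the exponential is strictly convex), so any two maximizers must share the same sum.

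For the complementarity and primal link, I would use part (i) $N$ times: at a maximizer one must have $u_i^0 = \Ficep(\widehat{u_i^0})$ for every $i$. Plugging this into $\gamma^{0,N} = e^{(\sum_i u_i^0 - c)/\ep}\rho^N$ and integrating out the variables $x_j$ for $j \neq i$ gives exactly the factor $e^{u_i^0/\ep} \cdot e^{-\Ficep(\widehat{u_i^0})/\ep} = 1$ against $\rho_i$, so $\gamma^{0,N} \in \Pi(\rho_1,\ldots,\rho_N)$. To identify $\gamma^{0,N}$ as the primal minimizer and close the duality (iii), I would establish the multi-marginal analog of Lemma \ref{lem:easydual} (the same Young-type inequality $ts + \ep t\ln t - \ep \geq -\ep e^{-s/\ep}$ works verbatim): for every admissible $\gamma$ and every admissible $(u_1,\ldots,u_N)$,
\[
\ep \KLN(\gamma|\mathcal{K}) \geq D^N_\ep(u_1,\ldots,u_N) + \ep,
\]
with equality iff $\gamma = e^{(\sum_i u_i - c)/\ep}\rho^N$. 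Specializing to $u^0$ and $\gamma^{0,N}$ gives equality on both sides, pinning down $\gamma^{0,N}$ as the unique minimizer (unique by strict convexity of $\KLN$) and yielding $\OTNep(\rho_1,\ldots,\rho_N) = D^N_\ep(u_1^0,\ldots,u_N^0) + \ep$.

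The only real obstacle is the lower semicontinuity of the exponential term under weak-$*$ convergence, but the uniform $L^\infty$ bound on the maximizing sequence obtained from Lemma \ref{lemma:betterpotentials_multi} makes the integrand uniformly bounded, so dominated convergence or convexity-based lsc applies without subtlety. The rest of the proof is the pattern of Theorem \ref{thm:kanto2Nmax}, Proposition \ref{prop:equiv_comp}, and Proposition \ref{prop:duality2N} transcribed to $N$ marginals, with $\Ficep$ playing the role of the two-marginal $(c,\ep)$-transform in each slot separately.
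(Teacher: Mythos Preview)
Your proposal is correct and follows exactly the approach the paper intends: the paper explicitly omits the proof, stating that it ``follows the same lines of the case $N=2$, without big changes,'' referring to Theorem~\ref{thm:kanto2Nmax}, Proposition~\ref{prop:equiv_comp}, and Proposition~\ref{prop:duality2N}. Your outline---Fubini plus the pointwise scalar optimization for part~(i), the direct method with Lemma~\ref{lemma:betterpotentials_multi} and weak-$*$ compactness for existence in~(ii), the maximality condition $u_i^0=\Ficep(\widehat{u_i^0})$ to verify $\gamma^{0,N}\in\Pi(\rho_1,\ldots,\rho_N)$, and the Young-type inequality of Lemma~\ref{lem:easydual} for~(iii)---is precisely that transcription.
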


Finally, the result extends the main results of the previous section to the multi-marginal case.

\begin{prop}[Equivalence and complementarity condition]\label{prop:equiv_comp_multi}
Let $\ep>0$ and for every $i\in\lbrace 1,\dots,N\rbrace$, let $(X_i,d_i)$ be Polish metric spaces, $\rho_i \in \P(X_i)$ be a probability measures and $c:X_1\times\dots\times X_N\to \R$ be a bounded cost function. Then given $u_i^* \in \Lexp(\rho_i) $ for every $i=1, \ldots, N$, the following are equivalent:
\begin{enumerate}
\item \emph{(Maximizers)} $u_1^*, \ldots, u_N^*$ are maximizing potentials for \eqref{eqn:defDN};
\item \emph{(Maximality condition)} $\Ficep (\hat{u_i^*})=u_i^*$ for every $i=1, \ldots, N$;
\item \emph{(Schr\"{o}dinger system)} let $\gamma^*=e^{(\sum_i u_i^*(x_i)-c)/\ep} \cdot  \rho^N $, then $\gamma^* \in \Pi(\rho_1, \ldots, \rho_N)$; 
\item \emph{(Duality attainement) }$\OTNep(\rho_1,\ldots, \rho_N) = D^N_{\ep} (u_1^*, \ldots, u_N^*) +\ep$.
\end{enumerate}
Moreover in those cases $\gamma^*$, as defined in 3, is also the (unique) minimizer for the problem \eqref{eq:primalSchrMult}
\end{prop}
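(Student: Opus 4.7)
The plan is to mimic the cyclic proof of Proposition \ref{prop:equiv_comp} in the two-marginal case, proving the implications $1 \Rightarrow 2 \Rightarrow 3 \Rightarrow 4 \Rightarrow 1$. First I would establish a multi-marginal analogue of Lemma \ref{lem:easydual}, namely that for every $\gamma \in \Pi(\rho_1, \ldots, \rho_N)$ and every $(u_1, \ldots, u_N) \in \prod_i \Lexp(\rho_i)$ one has
\[
\ep \KLN(\gamma | \K) \geq D^N_\ep(u_1, \ldots, u_N) + \ep,
\]
with equality if and only if $\gamma = e^{(\sum_i u_i(x_i) - c)/\ep} \, \rho^N$. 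The proof is the same pointwise inequality $ts + \ep t \ln t - \ep \geq -\ep e^{-s/\ep}$ applied to $t = \gamma/k^N$ and $s = c - \sum_i u_i$, integrated against $\rho^N$, exactly as in the $N=2$ argument.

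For $1 \Rightarrow 2$, I would apply \eqref{est:optcondmm} and \eqref{est:optcond2mm} from Theorem \ref{thm:dualNmarg}(i) to each coordinate in turn. Fixing all $u_j^*$ for $j \neq i$ and using maximality together with the strict concavity encoded in \eqref{est:optcond2mm}, we conclude that $u_i^* = \Ficep(\hat{u_i^*})$ for every $i$. For $2 \Rightarrow 3$, a direct Fubini computation analogous to the one in the $N=2$ case shows that the $i$-th marginal of $\gamma^*$ is $e^{(u_i^* - \Ficep(\hat{u_i^*}))/\ep} \rho_i$; hence the maximality conditions immediately yield $\gamma^* \in \Pi(\rho_1, \ldots, \rho_N)$.

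For $3 \Rightarrow 4$, since $\gamma^* \in \Pi(\rho_1, \ldots, \rho_N)$, the multi-marginal Lemma gives us three inequalities: applied to $(\gamma^*, u, v)$ arbitrary, to $(\gamma, u^*_1, \ldots, u^*_N)$ arbitrary, and to the pair $(\gamma^*, u^*_1, \ldots, u^*_N)$ itself (which achieves equality by the very definition of $\gamma^*$). Chaining them as in the two-marginal case produces
\[
\ep \KLN(\gamma | \K) \geq D^N_\ep(u_1^*, \ldots, u_N^*) + \ep = \ep \KLN(\gamma^* | \K),
\]
so $\gamma^*$ is the minimizer and $\OTNep(\rho_1, \ldots, \rho_N) = D^N_\ep(u_1^*, \ldots, u_N^*) + \ep$. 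Finally $4 \Rightarrow 1$ follows by minimizing in $\gamma$ the inequality of the multi-marginal easy duality lemma: this gives $\OTNep(\rho_1, \ldots, \rho_N) \geq D^N_\ep(u_1, \ldots, u_N) + \ep$ for every admissible tuple, and comparing with the hypothesized equality for $(u_1^*, \ldots, u_N^*)$ forces the latter to be maximizers.

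The only genuinely new ingredient is the multi-marginal easy-duality lemma, whose proof is a routine adaptation; the rest is bookkeeping. The mildly delicate step is $2 \Rightarrow 3$, where one must verify that the maximality conditions on \emph{each} coordinate (not only on one selected coordinate) are simultaneously needed to match \emph{all} $N$ marginals, and that the resulting density is indeed a probability measure (which follows a posteriori once the marginal conditions are known). Uniqueness of the minimizer is then inherited from the strict convexity of $\gamma \mapsto \KLN(\gamma|\K)$ on its effective domain, exactly as in the two-marginal case.
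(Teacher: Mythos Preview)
Your proposal is correct and mirrors exactly the cyclic argument $1 \Rightarrow 2 \Rightarrow 3 \Rightarrow 4 \Rightarrow 1$ of Proposition~\ref{prop:equiv_comp}, which is precisely what the paper indicates should be done (the authors omit the proof, stating it follows the two-marginal case without substantial changes). The only cosmetic slip is writing ``$(\gamma^*, u, v)$'' in the $3 \Rightarrow 4$ step where you mean an arbitrary tuple $(u_1,\ldots,u_N)$, but the intent is clear.
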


The part 3. in proposition \ref{prop:equiv_comp_multi} have been already shown in different settings by J.M Borwein, A.S. Lewis and R.D. Nussbaum (\cite[Theorem 4.4]{BorLew92}, see also \cite[section 3]{BorLewNus94}) and G. Carlier \& M. Laborde \cite{CarLab18}. Our approach, being purely variational, allows us to study the convergence of the Sinkhorn algorithm in the several marginal case in a similar way of done in the previous section. 

In fact, $\OTNep(\rho_1,\dots,\rho_N)$ defines an unique element $\gammaep_{N,opt}$ - the $\KLN$-projection on $\Pi(\rho_1,\dots,\rho_N)$ - which has product density $\Pi^N_{i=1}a_i$ with respect to the Gibbs measures $\K$, where $a_i = e^{u^*_i/\ep}$ as defined in proposition \ref{prop:equiv_comp_multi}. Also in this case, an equivalent system to \eqref{intro:SchSys} can be implicity written: $\gammaep_{N,opt}$ is a solution of \eqref{eq:primalSchrMult} if and only if $\gammaep_{N,opt} = \otimes^N_{i=1}a^{\ep}(x_i)\ka, \text{ where } a^{\ep}_i \text{ solve } $
\be\label{intro:SchSysMM}
     \ds
     a^{\ep}_i(x)\int_{Y} \otimes^N_{j\neq i}a_j(x_j)k(x_1,\dots,x_N)d\rho_i(x_i) = 1, \quad \forall i = 1,\dots, N.
\ee
Therefore, by using the marginal condition $\gammaep \in \Pi(\rho_1,\dots,\rho_N)$, the functions $a_i$ can be implicitly computed
\[
a_i(x_i) = \dfrac{1}{\int_{\Pi^N_{j\neq i}X_j}\otimes^N_{j\neq i}a_j(x_j)k(x_1,\dots,x_N)d(\otimes^N_{j\neq i}\rho_j)}, \quad \forall i \in \left\lbrace 1,\dots,N\right\rbrace.
\]

\subsection{Convergence of the IPFP / Sinkhorn algorithm for several marginals}
\label{sec:convergenceIPFPMM}

\quad \quad The goal of this subsection is to prove the convergence of the IPFP/Sinkhorn algorithm in the multi-marginal setting. Analogously to \eqref{eq:IPFPiteration}, define recursively the sequences $(a^n_j)_{n\in\N}, j\in \lbrace 1,\dots,N\rbrace$ by
\begin{equation}\label{eq:IPFPsequenceN}
\begin{array}{lcl}
\ds a_1^0(x_1) & = & 1, \\
\ds a^0_j(x_j) & = & 1, \quad j \in \lbrace 2,\dots,N \rbrace, \\
\ds a^n_j(x_j) & = & \dfrac{1}{\int \otimes^N_{i<j}a_i^n(x_i)\otimes^N_{i> j}a_i^{n-1}(x_i)k^N(x_1,\dots,x_N)d(\otimes^N_{i\neq j}\rho_i)}, \, \forall n\in \N.
\end{array}
\end{equation}

Also here,  by writing $a^n_j = \exp(u^n_j/\ep)$, for all $j\in\lbrace 1,\dots,N\rbrace$, one can rewrite the IPFP sequences \eqref{eq:IPFPsequenceN} in terms of Entropic $(c,\ep)$-transforms,
\begin{align*}
u^n_j(x_j) &= - \ep\log\left(\int_{\Pi_{i \neq j}X_i} k^N(x_1,\dots,x_N)\otimes_{i\neq j}e^{u^n_i(x_i)/\ep}d\left(\otimes^N_{i\neq j}\rho_i\right)\right) \\
&= (\hat{u^n_j})^{(N,c,\ep)}(x_j).
\end{align*}

Then, the proof of convergence of the IPFP in the multi-marginal case, follows a method similar to the one used in Theorem \ref{thm:convIPFP}.
\begin{teo}\label{thm:convIPFPNmarg}
Let $(X_1,d_1), \dots, (X_N,d_N)$ be Polish spaces, $\rho_1,\dots,\rho_N$ be probability measures in $X_1,\dots,X_N$, $c:X_1\times\dots\times X_N\to[0,+\infty]$ be a bounded cost, $p$ be an integer $1\leq p <\infty$. If $(a^n_j)_{n\in\N}, j\in\lbrace 1,\dots,N\rbrace$ are the IPFP sequence defined in \eqref{eq:IPFPsequenceN}, then there exist a sequence $\lambda^n \in \R^N$, with $\lambda^n_i >0$ and $\prod_{i=1}^N \lambda^n_i = 1$ such that
\[
\forall j\in\lbrace 1,\dots,N\rbrace, \quad  a^n_j/\lambda^n_j\to a_j \text{ in } L^p(\rho_j),
\]
where $(a_j)_{j=1}^N$ solve the Schr\"{o}dinger system. In particular, the sequence $\gamma^n = \Pi^N_{i=1}a^n_i\ka$ converges  in $L^p(\rho_1\otimes\dots\otimes\rho_N)$ to the optimizer $\gammaep_{opt}$  in \eqref{eq:primalSchrMult}. 
\end{teo}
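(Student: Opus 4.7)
The strategy parallels the proof of Theorem \ref{thm:convIPFP}, with the alternation between two potentials replaced by a cyclic update of the $N$ coordinates. First, I would switch to the dual variables $u_j^n:=\ep\log a_j^n$; then the iteration \eqref{eq:IPFPsequenceN} is, in dual form, the composition of $N$ entropic $c$-transforms performed one coordinate at a time: each $u_j^n$ is obtained as $\Ficep(\widehat{u_j^{\,\cdot}})$ evaluated at the most recent values of the other coordinates within the current sweep. By the multi-marginal maximality property \eqref{est:optcondmm}, $D^N_\ep(u_1^n,\ldots,u_N^n)$ is non-decreasing along the iteration and, by Theorem \ref{thm:dualNmarg}(iii), it is bounded above by $\OTNep(\rho_1,\ldots,\rho_N)-\ep$. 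Therefore the dual values converge to a finite limit and every single-coordinate increment tends to zero.

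As soon as a coordinate has been overwritten at least once, it is itself a $(c,\ep)$-transform, so by Lemma \ref{lemma:entropytransbound}(v) its oscillation is at most $2\|c\|_\infty$. I would then apply the projection $P$ of Lemma \ref{lem:P} to get $\tilde u^n:=P(u^n)$ with $\|\tilde u_i^n\|_\infty\le 3\|c\|_\infty$ for every $i$ (this is exactly the argument in Lemma \ref{lemma:betterpotentials_multi}) while $D^N_\ep(\tilde u^n)=D^N_\ep(u^n)$. Writing $\tilde u_i^n=u_i^n-\ep\log\lambda_i^n$ defines positive reals satisfying $\prod_i\lambda_i^n=1$ by construction of $P$; these are the normalizers that appear in the statement.

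By the compactness of each operator $\Ficep:\prod_{j\neq i}L^\infty(\rho_j)\to L^p(\rho_i)$ from Lemma \ref{lemma:entropytransbound}(v), the uniform $L^\infty$ bound on $\tilde u^n$ produces a subsequence $(\tilde u^{n_k})$ converging in $L^p(\rho_i)$ for every $i$ to a tuple $u^\infty=(u_1^\infty,\ldots,u_N^\infty)$ with each $u_i^\infty\in L^\infty(\rho_i)$. The main obstacle is identifying $u^\infty$ as a maximizer of $D^N_\ep$: since all dual increments vanish in the limit and each coordinate is updated once per sweep, combining the continuity of $\Ficep$ with respect to strong $L^p$-convergence of its arguments with the strict concavity in \eqref{est:optcond2mm} yields $u_i^\infty=\Ficep(\widehat{u_i^\infty})$ for every $i$. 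The delicate point is to extract sub-subsequences so that the ``before'' and ``after'' configurations of each single-coordinate update share the same limit $u^\infty$; this is the multi-marginal analogue of the argument following \eqref{est:optcond2} in Theorem \ref{thm:convIPFP}, and it is feasible precisely because the simultaneous $L^p$-precompactness of all $N$ coordinates is already in place.

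Proposition \ref{prop:equiv_comp_multi} then gives that $a_i:=e^{u_i^\infty/\ep}$ solves the Schr\"odinger system and that $\gamma:=\prod_i a_i\cdot k^N\cdot\rho^N$ is the unique minimizer of \eqref{eq:primalSchrMult}. Uniqueness of the maximizer up to the trivial transformations (which are absorbed exactly by the normalizers $\lambda_i^n$) upgrades the subsequential convergence to convergence of the whole sequence $a_j^n/\lambda_j^n$ in $L^p(\rho_j)$. For $\gamma^n$, the translations cancel in the product since $\prod_i\lambda_i^n=1$, so $\sum_i u_i^n=\sum_i \tilde u_i^n\to\sum_i u_i^\infty$ in $L^p(\rho_1\otimes\cdots\otimes\rho_N)$; as these sums are uniformly bounded and $k^N\in L^\infty(\rho_1\otimes\cdots\otimes\rho_N)$, the Lipschitz property of the exponential on bounded sets delivers $\gamma^n\to\gammaep_{N,opt}$ in $L^p(\rho_1\otimes\cdots\otimes\rho_N)$.
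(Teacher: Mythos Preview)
Your proposal is correct and follows essentially the same route as the paper: pass to dual variables, use monotonicity and boundedness of $D_\ep^N$, normalize via the projection $P$ of Lemma~\ref{lem:P} to get uniform $L^\infty$ bounds, extract a convergent subsequence by the compactness in Lemma~\ref{lemma:entropytransbound}(v), and identify the limit as a maximizer via \eqref{est:optcond2mm} and Proposition~\ref{prop:equiv_comp_multi}. The one place the paper is more explicit than your sketch is the ``delicate point'' you flag: it introduces a micro-step indexing $u^{pN+i}$ (one coordinate updated per step) and then uses a pigeonhole argument on the residue class of $k_n$ modulo $N$ to ensure that along the chosen subsequence the same coordinate is being updated, which is exactly what makes the before/after limits coincide.
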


\begin{proof} Let $i \in \lbrace 1,\dots, N\rbrace$ and consider Let $(a_i^n)_{n\in\N}$ the IPFP sequence defined in \eqref{eq:IPFPsequenceN}. For every $i$, we define $u_i^0 := \ep \ln (a_i^0)$ and then iteratively define the following potentials for every $p \in \N$
\[
\begin{cases}
u_1^{pN+1} = (\widehat{u_1^{pN}})^{(c,\ep)} \\
u_2^{pN+1} = u_2^{pN} \\ 
u_3^{pN+1} = u_3^{pN} \\ 
\, \cdots \\
u_N^{pN+1} = u_N^{pN} \\ 
     \end{cases}, \quad \begin{cases}
u^{pN+2}_1 =  u_1^{pN+1} \\
u_{pN+2} = (\widehat{u^{pN+1}_2})^{(c,\ep)} \\
u^{pN+2}_3 =  u_3^{pN+1} \\ 
\, \cdots  \\
u^{pN+2}_N =  u_N^{pN+1} \\ 
     \end{cases}, \dots \, , \quad \begin{cases}
u_1^{pN+N} = u_1^{pN+N-1} \\ 
u_2^{pN+N} = u_2^{pN+N-1} \\ 
u_3^{pN+N} = u_3^{pN+N-1} \\ 
\, \cdots \\
u^{pN+N}_N = (\widehat{u^{pN+N-1}_N})^{(c,\ep)} \\
     \end{cases}.
\] 	
Notice that $a_i^n = e^{u^{nN}_i / \ep }$ and moreover $osc ( u^{pN+i}_i  ) \leq \| c \|_{\infty}$ by Lemma \ref{lemma:entropytransbound} (iv), and in particular $osc(u^{n}_i) \leq \| c \|_{\infty}$ as long as $n \geq N$.  Moreover, thanks to \eqref{eqn:lambdau} we also have $|\sum_i \lambda_{u_i^n}| \leq \| c \|_{\infty}$. In particular, defining $v^n= P(u^n)$, we have $\|v_i^n\|_{\infty} \leq 3\|c\|_{\infty}$ thanks to Lemma \ref{lem:P} (ii); using \eqref{est:optcondmm} and Lemma \ref{lem:P} we also have
\[
\Dep^N(v^{n}_1,\dots, v^{n}_N)\leq \Dep^N(v^{n+1}_1,\dots, v^{n+1}_N) \leq \dots \leq \OTep(\rho_1,\rho_2,\dots, \rho_N).
\]

By the boundedness of $\| v_i^n\|_{\infty}$, by the compactness in Lemma \ref{lemma:entropytransbound} (iv), there exists a subsequence $k_n$  such that $v_i^{k_n}$ converges in $L^{p}$ to some $v_i$ for every $i=1, \ldots, N$; by pigeon-hole principle we have that at least a class of residue modulo $N$ is taken infinitely by the sequence $k_n$ and we will suppose that without loss of generality this residue class is $0$. Up to restricting to the infinite subsequence such that $k_n \equiv 0 \pmod{N}$, we can assume that $v_N^{k_{n}} = (\widehat{v_N^{k_n}})^{(N,c,\ep)}$ and $v_1^{k_{n}+1} = (\widehat{v_1^{k_n}})^{(N,c,\ep)}$

In particular, by the continuity of the $(N,c, \ep)$-transform we have
\[
\Dep^N(\hat{v_1}^{(N,c,\ep)}, \hat{v_1}) - \Dep^N(v_1,v_2,\dots,v_N) = \lim_{n\to\infty} \Dep(v_1^{k_n+1},\dots, v_N^{k_n+1}) - \Dep(v_1^{k_n},\dots, v_N^{k_n}) = 0.
\]
In particular, we have $v_1 = \hat{v_1}^{(N,c,\ep)}$ by \eqref{est:optcond2mm} and in particular $u_i^{k_n+1} \to u_i$ for every $i=1, \ldots, N$. Now, doing a similar computation, for every $i=2,\dots,N$, we  can inductively prove that, for every $i$,
\[
\Dep( \hat{v_i}^{(N,c,\ep)}, \hat{v_i}) - \Dep(v_1,\dots, v_i, \dots, v_N) = \lim_{n \to\infty} \Dep(v_1^{k_n+i},\dots, v_N^{k_n+i}) - \Dep(v_1^{k_n+i-1},\dots, v_N^{k_n+i-1}) = 0.
\]   
Hence, $v_i = \hat{v_i}^{(N,c,\ep)}, \, \forall i \in \lbrace 1,\dots, N\rbrace$. The result follows by noticing that $(e^{v_1/\ep},\dots, e^{v_N/\ep})$ solves the Schr\"odinger system, by Proposition \ref{prop:equiv_comp_multi}.

\end{proof}

\noindent
\emph{Remark on the multi-marginal problem $\Sep(\rho_1,\dots,\rho_N;\refm_1,\dots, \refm_N)$:} More generally, we could also consider the multi-marginal Schr\"odinger problem with references measures $\refm_i \in \P(X_i), i=1,\dots, N$. For simplicity, we denote $\overline{\rho} = (\rho_1,\dots,\rho_N)$ and $\overline{\refm} = (\refm_1,\dots,\refm_N)$. Then the functional $\Sep(\overline{\rho};\overline{\refm})$ is defined by

$$ \Sep(\overline{\rho};\overline{\refm}) = \min_{\gamma\in\Pi_N(\rho_1,\dots,\rho_N)} \ep\KLN(\gamma | \refm_1\otimes \cdots \otimes \refm_N).  $$

Analogously to the $2$ marginal case, the duality results, existence and regularity of entropic-potentials as well as the convergence of the Sinkhorn algorithm can be extended to that case. We omit the details here since the proof follows by similar arguments.\\

\noindent
{\bf Acknowledgements:} This work started when the second author visited the first author, while he was working at the Scuola Normale Superiore di Pisa (INdAM unit). The authors wants to thank G. Carlier, C. L\'eonard and L. Tamanini for useful discussions.

\section{Appendix}

\begin{prop}\label{prop:compactness} Let $(X,d,\mu)$ be a measurable metric space with $\mu(X)=1$. Let us assume that $\Fam \subset L^p(X,\mu)$ is a family of functions such that:
\begin{itemize}
\item[(a)] there exists $M>0$ such that $ \|f\|_{\infty} \leq M$ for every $f \in \Fam$;
\item[(b)] for every $\sigma$ there exists a set $N^{\sigma}$, a modulus of continuity $\omega_{\sigma}$ and a number $\beta_{\sigma}\geq0$ such that
$$ |f(x)-f(x')| \leq \omega_{\sigma}(d(x,x')) + \beta_\sigma \qquad \forall x,x' \not \in N^{\sigma}$$
where $N^{\sigma}$ and $\beta_{\sigma}$ are such that $\mu(N^{\sigma})+\beta_\sigma \to 0$ as $\sigma \to 0$.
\end{itemize}
Then the family $\Fam$ is precompact in $L^p(X, \mu)$.
\end{prop}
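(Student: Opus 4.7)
The goal is to prove that $\Fam$ is totally bounded in $L^p(X,\mu)$, whence precompact by completeness of $L^p$. Given $\tau>0$, I would construct an explicit finite $\tau$-net by three successive discretizations: of the scale, of the underlying space, and of the function values.

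First, I would use the hypothesis $\mu(N^\sigma)+\beta_\sigma \to 0$ to choose $\sigma$ small enough that both $M\,\mu(N^\sigma)^{1/p}$ and $\beta_\sigma$ are a suitably small fraction of $\tau$; this fixes the modulus $\omega_\sigma$ governing the pointwise oscillation of every $f\in\Fam$ on the good set. In the paper's setting $X$ is Polish, so $\mu$ is tight: I would select a compact $K\subseteq X$ with $\mu(X\setminus K)$ small enough that $M\,\mu(X\setminus K)^{1/p}$ also contributes a controlled error. Since $K$ is totally bounded, I cover it by finitely many balls of radius $r$ with $\omega_\sigma(2r)$ small, and refine to a finite Borel partition $K=A_1\sqcup\cdots\sqcup A_n$ with diameters bounded by $2r$.

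To each $f\in\Fam$ I associate the simple function
\[
\tilde f=\sum_{i=1}^n f(x_i^f)\,\chi_{A_i},
\]
where $x_i^f\in A_i\setminus N^\sigma$ is a reference point chosen whenever that set is non-empty (otherwise the coefficient is set to $0$). By hypothesis (b), on $A_i\setminus N^\sigma$ one has the pointwise bound $|f-f(x_i^f)|\leq \omega_\sigma(2r)+\beta_\sigma$, while hypothesis (a) gives the trivial bound $|f-\tilde f|\leq 2M$ everywhere else. Splitting $\|f-\tilde f\|_p^p$ over the three regions $K\setminus N^\sigma$, $K\cap N^\sigma$, and $X\setminus K$, the parameter choices force $\|f-\tilde f\|_p \leq \tau/2$. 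Since the coefficients $f(x_i^f)$ lie in the compact interval $[-M,M]$, replacing each by the nearest point in a finite grid of sufficiently small mesh perturbs $\tilde f$ by at most $\tau/2$ in $L^p$ and leaves only finitely many possibilities, giving a $\tau$-net for $\Fam$.

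The main obstacle is the bookkeeping: parameters must be fixed in a specific order, $\sigma$ first (which pins down $\omega_\sigma$ and $\beta_\sigma$), then the compact $K$ and the radius $r$ (whose admissible size depends on $\omega_\sigma$), and only finally the grid mesh for the coefficients. The two vanishing quantities in hypothesis (b) play complementary roles: $\mu(N^\sigma)$ is absorbed by the uniform $L^\infty$ bound $M$ on the region where pointwise control fails, while $\beta_\sigma$ appears as a uniform additive term in the oscillation estimate on the controlled part, and both must be driven to zero simultaneously with $\tau$.
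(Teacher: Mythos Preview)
Your argument is correct and takes a genuinely different route from the paper's. You prove total boundedness directly: fix a single scale $\sigma$, exploit tightness of $\mu$ on the Polish space to pass to a compact $K$, partition $K$ into small pieces, and build a finite $\tau$-net of simple functions with discretized coefficients. The paper instead proves sequential compactness: it first \emph{combines} countably many exceptional sets $N^{\sigma_n}$ (with $\sum_n \mu(N^{\sigma_n})\le\ep$) into a single $\mathcal N^{\ep}$, constructs the synthetic modulus $\omega^{\ep}(t)=\inf_n\{\omega_{\sigma_n}(t)+\beta_{\sigma_n}\}$ and checks it is a genuine modulus of continuity, then invokes Ascoli--Arzel\`a outside $\mathcal N^{\ep}$ together with a diagonal extraction over $\ep_k\to 0$ to produce an $L^p$-convergent subsequence. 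Your approach is more elementary and self-contained (no diagonal argument, no need to fuse infinitely many scales), at the price of making explicit use of tightness; the paper's approach highlights that once the $\beta_\sigma$ are absorbed into a single modulus the family is honestly equicontinuous off a small set, which is conceptually closer to the classical Ascoli--Arzel\`a picture. One small simplification you could make: since $N^\sigma$ does not depend on $f$, the reference points $x_i^f$ need not depend on $f$ either---you can pick a single $x_i\in A_i\setminus N^\sigma$ (when nonempty) once and for all.
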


\begin{proof}
Let us fix $\ep>0$ and let us consider a sequence $\sigma_n \to 0$ such that $\sum_{n =1}^{\infty} \mu(N^{\sigma_n}) \leq \ep$; then define $\omega_n= \omega_{\sigma_n}$ and  $\mathcal{N}^{\ep} := \bigcup_n N^{\sigma_n}$; in particular we  have $\mu(\mathcal{N}^{\ep}) \leq \ep$ and
\begin{equation}\label{eqn:eqcont} |f(x)-f(x')| \leq\omega_n(d(x,x'))  + \beta_{\sigma_n} \qquad \forall x, x' \not \in \mathcal{N}^{\ep}, \forall n \in \N.\end{equation}
Let us define $\omega^{\ep}(t) = \inf_n \{ \omega_n (t) + \beta_{\sigma_n}\}$: by \eqref{eqn:eqcont} we have that $f$ is $\omega^{\ep}$-continuous outside $\mathcal{N}^{\ep}$.

We can verify that $\omega^{\ep}$ is a non degenerate modulus of continuity: it is obvious that is it nondecreasing since it is an infimum of noncreasing functions. Then for every $\tilde{\ep}>0$ we can choose $n$ big enough such that $\beta_{\sigma_n}< \tilde{\ep}/2$ and then choose $t$ small enough such that $\omega_n (t) < \tilde{\ep}/2$; in this way we have $\omega^{\ep}(t) \leq   \omega_n (t) + \beta_{\sigma_n} < \tilde{\ep}$. In particular $\omega^{\ep}(t) \to 0$ as $t \to 0$.

Now we conclude by a diagonal argument: let us consider a sequence $(f^0_n)_{n \in \N} \subseteq \Fam$ and a sequence $\ep_k\to 0$. We want to find a subsequence that is converging strongly in $L^p$. We iteratively extract a subsequence $(f^k_n)$ of $(f^{k-1}_n)$ that is converging uniformly outside $\mathcal{N}^{\ep_k}$ (thanks to Ascoli-Arzel\`a) to some function $f^k$, which is defined only outside $\mathcal{N}^{\ep_k}$. Then let us consider
$$ f(x)= \begin{cases} f^k(x) \qquad &\text{ if }x \not \in \mathcal{N}^{\ep_k} \\ 0 & \text{ otherwise.} \end{cases}$$
First of all $f$ is well defined since if $x \not \in \mathcal{N}^{\ep_k}$ and $x \not \in \mathcal{N}^{\ep_j}$ with $j >k$ then we have that $f^k_n (x) \to f^k(x)$ but since $f^j_n$ is a subsequence of $f^k_n$ we have also $f^j_n(x) \to f^k(x)$; however by definition  $f^j_n(x) \to f^j(x)$ and so $f^j(x)=f^k(x)$. Moreover it is clear that $\|f\|_{\infty} \leq M$ since this is true for every $f^0_n$ thanks to property (a). Now we consider the sequence $g_n = f^n_n$ which is a subsequence of $f^0_n$. Let us fix $\ep>0$ and choose $k$ such that $\ep_k < \ep^p$; then let $n_0>k$ such that $|f^k_n - f| \leq \ep$ on $X \setminus \mathcal{N}^{\ep_k}$ for every $n \geq n_0$. Now we have $g_{n}=f^k_n$ for some $n\geq n_0$ and in particular

\begin{align*} \int_X |g_{n_0}(x)-f(x)|^p \,d \mu & = \int_{X} | f^k_n(x) -f(x)|^p \, d \mu  \\
&= \int_{\mathcal{N}^{\ep_k}} | f^k_n(x)-f(x)|^p \, d \mu + \int_{X \setminus  \mathcal{N}^{\ep_k}} | f^k_n(x)-f(x)|^p \, d \mu  \\
& \leq \int_{\mathcal{N}^{\ep_k}} (2M)^p \, d \mu + \int_{X \setminus  \mathcal{N}^{\ep_k}} \ep^p \, d \mu  \\
& \leq \mu(\mathcal{N}^{\ep_k}) (2M)^p + \ep^p ) \\
& \leq \ep_k (2M)^p + \ep^p \mu(X) \leq \ep^p ( 2^pM^p+ 1)
\end{align*}

In particular we get $g_n \to f $ in $L^p$ and so we're done.

\end{proof}

\bibliographystyle{siam}
\bibliography{refs}

\end{document}